\newtheorem{thm}{Theorem}[section]
\newtheorem{lem}[thm]{Lemma}
\newtheorem{cor}[thm]{Corollary}
\newtheorem{prop}[thm]{Proposition}
\numberwithin{equation}{section}
\newcommand{\thmlabel}[1]{\label{thm:#1}} 
\newcommand{\lemlabel}[1]{\label{lem:#1}} 
\newcommand{\corlabel}[1]{\label{cor:#1}} 
\newcommand{\prplabel}[1]{\label{prp:#1}} 
\newcommand{\seclabel}[1]{\label{sec:#1}} 
\newcommand{\eqnlabel}[1]{\label{eqn:#1}} 
\newcommand{\thmref}[1]{\ref{thm:#1}}			
\newcommand{\lemref}[1]{\ref{lem:#1}}			
\newcommand{\corref}[1]{\ref{cor:#1}}			
\newcommand{\prpref}[1]{\ref{prp:#1}}			
\newcommand{\secref}[1]{\ref{sec:#1}}			
\newcommand{\eqnref}[1]{\eqref{eqn:#1}}		
\newcommand{\id}{\mathrm{id}}							
\newcommand{\ldiv}{\backslash}						
\newcommand{\rdiv}{/}											
\newcommand{\inv}{^{-1}}									
\newcommand{\sbl}[2]{\langle #1 \, | \, #2 \rangle} 
\newcommand{\setof}[2]{\{ #1 \,|\, #2 \}}	
\newcommand{\Aut}{\operatorname{Aut}}			
\newcommand{\Atp}{\operatorname{Atp}}			
\newcommand{\Mlt}{\operatorname{Mlt}}			
\newcommand{\LMlt}{\mathcal{L}}						
\newcommand{\RMlt}{\mathcal{R}}						
\newcommand{\Inn}{\operatorname{Inn}}			
\newcommand{\buchid}{\textsc{B}}					
\newcommand{\buchimp}{\buchid'}						
\newcommand{\buchimplong}{\buchid''}				
\newcommand{\buchbigid}{\textsc{\^B}}			
\newcommand{\buchbigimp}{\buchbigid'}			
\newcommand{\Buch}{\mathbf{B}}						
\newcommand{\BBuch}{\tilde{\mathbf{B}}}		
\newcommand{\Ex}{\mathbf{E}}							
\newcommand{\Mouf}{\mathbf{M}}						
\newcommand{\WIP}[1]{W$^{#1}$IP}					
\newcommand{\WWIP}{\textsc{WWIP}}					
\newcommand{\Eaut}{\textsc{E}}						
\newcommand{\dM}{\textsc{M}}							
\newcommand{\LCC}{\textsc{LCC}}						
\newcommand{\RCC}{\textsc{RCC}}						
\newcommand{\LIP}{\textsc{LIP}}						
\newcommand{\RIP}{\textsc{RIP}}						
\newcommand{\Flex}{\textsc{Flex}}					
\newcommand{\LAlt}{\textsc{LAlt}}					
\newcommand{\RAlt}{\textsc{RAlt}}					
\newcommand{\Extra}{\textsc{Ex}}					
\newcommand{\vhi}{\varphi}								
\newcommand{\eps}{\varepsilon}						
\newcommand{\textand}{\quad\text{and}\quad}
\newcommand{\frall}{\quad\text{for all}\quad}
\begin{document}

\title[Buchsteiner loops]
{Buchsteiner loops}
\author{Piroska Cs\"{o}rg\H{o}}
\address{Department of Mathematics \\
E\"{o}tv\"{o}s University \\
P\'{a}zm\'{a}ny P\'{e}ter s\'{e}t\'{a}ny 1/C \\
H-1117 Budapest, Hungary}
\email{ska@cs.elte.hu}
\author {Ale\v{s} Dr\'{a}pal}
\address{Department of Mathematics \\
Charles University \\
Sokolovsk\'{a} 83 \\
186 75 Praha 8, Czech Rep.}
\email{drapal@karlin.mff.cuni.cz}
\author{Michael K. Kinyon}
\address{Department of Mathematics \\
University of Denver \\
2360 S. Gaylord St. \\
Denver, Colorado 80208, U.S.A.}
\email{mkinyon@math.du.edu}

\thanks{The first author (P. Cs\"org\H o) was supported by Hungarian National
Foundation for Scientific Research, grants T049841 and T038059. The second
author (A. Dr\'apal) was supported by institutional grant MSM 0021620839.
The third author (M. Kinyon) was supported by the Eduard \v{C}ech Center
LC505. This paper was written
while the second author was a Fulbright Research Scholar at the University of
Wisconsin-Madison.}
\keywords{Buchsteiner loop, conjugacy closed loop, nucleus}
\subjclass[2000]{Primary 20N05; Secondary 08A05}

\begin{abstract}
Buchsteiner loops are those which satisfy the identity
$x\ldiv (xy \cdot z) = (y \cdot zx)\rdiv x$. We show that a
Buchsteiner loop modulo its nucleus is an abelian group
of exponent four, and construct an example where the factor
achieves this exponent.
\end{abstract}

\maketitle

A \emph{loop} $(Q,\cdot)$ is a set $Q$ together with a binary operation
$\cdot$ such that for each $a,b\in Q$, the equations $a\cdot x = b$
and $y\cdot a = b$ have unique solutions $x,y\in Q$, and such that
there is a neutral element $1\in Q$ satisfying $1\cdot x = x\cdot 1 = x$
for every $x\in Q$. Standard references in loop theory are
\cite{bel,bruck,pflug}.

The variety (\emph{i.e.}, equational class) of all loops being too broad for
a detailed structure theory, most investigations focus on particular classes
of loops.
In this paper we investigate a variety of loops which has not hitherto received
much attention, despite the fact that it is remarkably rich in structure, namely
the variety defined by the identity
\begin{equation*}
x\ldiv (xy \cdot z) = (y \cdot zx)\rdiv x .
\tag{\buchid}
\end{equation*}
Here $a\ldiv b$ denotes the unique solution $x$ to $a\cdot x = b$, while
$b\rdiv a$ denotes the unique solution $y$ to $y\cdot a = b$.
We call (\buchid) the \emph{Buchsteiner law} and a loop satisfying it a
\emph{Buchsteiner loop} since Hans-Hennig
Buchsteiner seems to have been the first to notice their importance \cite{buch}.

The Buchsteiner law (\buchid) is easily seen to be equivalent
to each of the following:
\begin{align*}
xy\cdot z &= xu \quad &\Longleftrightarrow \qquad y\cdot zx &= ux
\quad & \text{ for all }x,y,z,u\,, \text{and}
\tag{\buchimp}\\
xy\cdot z &= xu\cdot v \quad &\Longleftrightarrow \qquad y\cdot zx &= u\cdot vx
\quad & \text{ for all }x,y,z,u,v\,.
\tag{\buchimplong}
\end{align*}
Both the identity (\buchid) and the implications (\buchimp), (\buchimplong) will prove
useful in what follows.

Buchsteiner loops can be understood in terms of coinciding
left and right principal isotopes or in terms of autotopisms
that identify the left and the right nucleus. Among the
equalities that can be obtained by nuclear identification
this is the only one which has not been subjected to a systematic
study. (The other equalities are the left and right Bol laws,
the Moufang laws, the extra laws, and the laws of left and
right conjugacy closedness \cite{dni}.)

All Buchsteiner loops are $G$-loops (a loop $Q$ is said to be a $G$-loop if
every loop isotope of $Q$ is isomorphic to $Q$). Groups are $G$-loops, and
it is well-known and easy to see that conjugacy closed loops are $G$-loops
as well. There are also various subclasses of Moufang loops which turn out
to be $G$-loops, such as the class of all simple Moufang loops, or the
variety of all $M_k$-loops for $k \not\equiv 1$ (mod $3$) \cite{cheinpflug}. Buchsteiner loops seem to be the only other known class
of $G$-loops with a concise equational definition. Such classes are of
considerable interest since it is known that $G$-loops cannot be described
by  first-order sentences \cite{kun}.

Our main results are the statement that in every Buchsteiner loop $Q$, the
nucleus $N$ is a normal subloop such that the factor loop $Q/N$ is an
abelian group of exponent $4$, and the construction of an example in which
that exponent is achieved.

Our investigations have been helped immensely by recent progress on
conjugacy closed loops \cite{kun,ccm,ccd,lc2,ccp,lce,cpq}. Buchsteiner did not
work with the notion of conjugacy closedness, and it turns out that the
examples he constructed in \cite{buch} are all conjugacy closed. This paper
thus seems to present the first example of a proper (non-CC) Buchsteiner
loop. A deeper understanding of this will require further study. We are
inclined to believe that Buchsteiner's speculations in \cite{buch} concerning
the connections to nonassociative division algebras (and, indirectly, to
projective planes) may prove to have been prescient.

To obtain the present results we had to use the concept of
\emph{doubly weak inverse property} (WWIP, for short), which naturally
generalizes the classical weak inverse property (WIP) of Osborn \cite{osb}.
WWIP loops can be also seen as a special case of the more general notion of
$m$-inverse loops \cite{kk,ks}. We believe that this is the first instance
where this concept found a highly nontrivial natural application in an
equational theory. The main result of \S\secref{WWIP} is the proof that each
Buchsteiner loop is a WWIP loop.

In \S\secref{isotopes}, we apply properties of WWIP loops to show that
each Buchsteiner loop $Q$ is a G-loop and that $Q/N$ is an abelian group.
In \S\secref{WIP}, we consider the special case of Buchsteiner loops
with the WIP, and show that these are exactly WIP CC loops.

To get the aforementioned restriction
on the exponent of $Q/N$ we use associator calculus, which is
developed in \S\secref{associator}. We suspect that future results on
Buchsteiner loops will require further and finer calculations with associators.
In \S\secref{compatible} and \S\secref{extend} we construct an example of a Buchsteiner loop $Q$ on 1024 elements such that $Q/N$ is indeed of the minimal exponent $4$. Further, $Q$ has a factor of order $64$ with this same
property.

We will discuss the relationship of our work to that of Buchsteiner \cite{buch}
and Basarab \cite{bao} in \S\secref{history}, and we conclude by announcing
some further results and stating several problems in \S\secref{conclusions}.

We have tried to write this paper in a way that is accessible to
researchers who are not specialists in loop theory.
We define all notions we need and when mentioning basic
properties which are easy to show we usually offer a proof.

We are pleased to acknowledge the assistance of McCune's automated
theorem proving program Prover9 \cite{Prover9}, which was particularly
invaluable for the results in \S\secref{WWIP}.

\section{Preliminaries}
\seclabel{basics}

In this section we introduce many of the basic tools of loop theory,
and examine how they are used in Buchsteiner loops.

\subsection{Multiplication groups}

Let $Q$ be a loop. For each $x\in Q$, the \emph{left} and \emph{right translation}
maps $L_x, R_x : Q\to Q$ are defined by $L_x y = xy$ and $R_x y = yx$,
respectively. For any $S\subseteq Q$, set
\[
L_{(S)}  = \setof{L_s}{s \in S}
\qquad \text{and} \qquad
R_{(S)}  = \setof{R_s}{s\in S} \;.
\]
The group generated by both types of translations
\[
\Mlt Q = \langle L_{(Q)}, R_{(Q)}\rangle = \sbl{L_x, R_x}{x\in Q}
\]
is called the \emph{multiplication group}, while the \emph{left} and
\emph{right multiplication groups} are defined, respectively, by
\[
\LMlt = \LMlt(Q) = \langle L_{(Q)}\rangle = \sbl{L_x}{x \in Q}
\textand
\RMlt = \RMlt(Q) = \langle R_{(Q)}\rangle = \sbl{R_x}{x \in Q}\,.
\]
Working with left and right translation maps allows many computations in loop
theory to be carried out in groups. For instance, rewriting the Buchsteiner
law (\buchid) in terms of translations immediately yields

\begin{lem}
\lemlabel{buch-trans}
In a loop $Q$, the Buchsteiner law \emph{(\buchid)} is equivalent
to each of the following:
\begin{align}
L_x\inv R_z L_x &= R_x\inv R_{zx} \text{ for all } x,z\in Q\,,
\eqnlabel{buch-trans1} \\
R_x\inv L_y R_x &= L_x\inv L_{xy} \text{ for all } x,y\in Q\,.
\eqnlabel{buch-trans2}
\end{align}
\end{lem}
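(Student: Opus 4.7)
The plan is to observe that both displayed identities are direct operator-theoretic rewritings of the Buchsteiner law, obtained by applying the defining relations $L_a b = ab$, $R_a b = ba$, $a\ldiv b = L_a\inv b$, and $b\rdiv a = R_a\inv b$ to each side of (\buchid). No structural loop-theoretic input is needed; the proof is essentially a bookkeeping exercise in which variable one reads as the ``input'' of the resulting permutation identity.

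For \eqnref{buch-trans1}, I would read (\buchid) as an identity in $y$ with $x,z$ held fixed. The left-hand side $x\ldiv(xy\cdot z)$ becomes $L_x\inv(R_z(L_x y)) = (L_x\inv R_z L_x)y$, while the right-hand side $(y\cdot zx)\rdiv x$ becomes $R_x\inv(R_{zx}\, y) = (R_x\inv R_{zx})y$. Equality for every $y$ yields \eqnref{buch-trans1} as an identity of bijections of $Q$; conversely, applying \eqnref{buch-trans1} to an arbitrary $y\in Q$ and unpacking the translations recovers (\buchid).

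For \eqnref{buch-trans2}, I would instead read (\buchid) as an identity in $z$ with $x,y$ held fixed. The left-hand side now factors as $L_x\inv(L_{xy}\, z) = (L_x\inv L_{xy})z$, and the right-hand side as $R_x\inv(L_y(R_x z)) = (R_x\inv L_y R_x)z$. Equating these as maps in $z$ gives \eqnref{buch-trans2}, and the converse is obtained by evaluation at an arbitrary $z$.

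There is no genuine obstacle: both equivalences amount to choosing which of the free variables in (\buchid) to treat as the argument of the resulting permutation identity, with $y$ producing \eqnref{buch-trans1} and $z$ producing \eqnref{buch-trans2}. The only minor care needed is to verify that $xy\cdot z$ is correctly parsed as $R_z L_x y$ on one reading and as $L_{xy} z$ on the other, and similarly that $y\cdot zx$ is $R_{zx} y$ in one reading and $L_y R_x z$ in the other.
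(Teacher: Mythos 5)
Your proposal is correct and matches the paper's intent exactly: the paper offers no separate argument, stating that the lemma follows "immediately" from rewriting (\buchid) in terms of translations, which is precisely the bookkeeping you carry out (reading the identity in $y$ to get \eqnref{buch-trans1} and in $z$ to get \eqnref{buch-trans2}). Nothing further is needed.
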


The following alternative form of Lemma \lemref{buch-trans} is also useful.

\begin{prop}
\prplabel{trans-char}
In a loop $Q$, the Buchsteiner law \emph{(\buchid)} is equivalent
to each of the following:
\begin{align}
R_x R_{(Q)}^{L_x} = R_{(Q)} \text{ for all } x\in Q\,, 
\eqnlabel{trans-var1} \\
L_x L_{(Q)}^{R_x} = L_{(Q)} \text{ for all } x\in Q\,.
\eqnlabel{trans-var2}
\end{align}
\end{prop}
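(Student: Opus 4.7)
The plan is to observe that each of \eqnref{trans-var1} and \eqnref{trans-var2} is essentially just Lemma \lemref{buch-trans} repackaged as a set equality, so I will derive them from \eqnref{buch-trans1} and \eqnref{buch-trans2} respectively (and conversely). The first step is to unpack the conjugation notation: $R_{(Q)}^{L_x}$ means $\{L_x\inv R_z L_x : z\in Q\}$, so \eqnref{trans-var1} asserts that $\{R_x L_x\inv R_z L_x : z\in Q\} = R_{(Q)}$.

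For the forward direction, assume \eqnref{buch-trans1}. Then for every $z\in Q$,
\[
R_x L_x\inv R_z L_x = R_x \cdot R_x\inv R_{zx} = R_{zx},
\]
and since $z\mapsto zx$ is a bijection of $Q$, the set on the left-hand side of \eqnref{trans-var1} is exactly $R_{(Q)}$. Conversely, if \eqnref{trans-var1} holds, then for each $z\in Q$ there is some $w\in Q$ with $R_x L_x\inv R_z L_x = R_w$; evaluating both sides at $1$ and using $L_x(1)=x$, $R_z(x)=xz$, $L_x\inv(xz)=z$, $R_x(z)=zx$, we get $zx = w\cdot 1 = w$, whence $L_x\inv R_z L_x = R_x\inv R_{zx}$, which is \eqnref{buch-trans1}.

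The equivalence of \eqnref{buch-trans2} and \eqnref{trans-var2} is handled by the mirror argument, replacing $R$ by $L$ throughout and evaluating at $1$ using the analogous identities. Combined with Lemma \lemref{buch-trans}, this yields the proposition. No real obstacle is expected; the only point to watch is the bookkeeping in the reverse direction, where one must pin down the index $w$ from the bare set equality, which is accomplished by evaluating at the neutral element.
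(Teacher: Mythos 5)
Your proof is correct and follows essentially the same route as the paper: the forward direction uses that $z\mapsto zx$ is a bijection so that $\{R_{zx}\}=R_{(Q)}$, and the converse pins down the unknown index by evaluating the set-membership identity at the neutral element, exactly as in the paper's argument.
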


\begin{proof}
If \eqnref{buch-trans1} holds, then so does \eqnref{trans-var1}.
Conversely, if \eqnref{trans-var1} holds, then
for each $y\in Q$, there exists $z\in Q$ such that
$L_x\inv R_y L_x = R_x\inv R_z$. Applying both sides to $1\in Q$,
we get $yx = z$, and so $L_x\inv R_y L_x = R_x\inv R_{yx}$, which is
\eqnref{buch-trans1}. The proof of the equivalence of
\eqnref{buch-trans2} and \eqnref{trans-var2} is similar.
\end{proof}

As subgroups of $\Mlt Q$, neither $\LMlt$ nor $\RMlt$ has to be normal
in the general case. However, both Lemma \lemref{buch-trans} and
Proposition \prpref{trans-char} have the following immediate consequence.

\begin{cor}
\corlabel{normal}
In a Buchsteiner loop $Q$,
$\LMlt$ and $\RMlt$ are normal subgroups of $\Mlt Q$.
\end{cor}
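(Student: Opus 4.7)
The plan is to verify normality of $\RMlt$ by checking that each generator of $\Mlt Q$ stabilizes it. Since $\Mlt Q$ is generated by $\{L_x, R_x : x \in Q\}$, and conjugation by $R_x$ trivially preserves $\RMlt$ (as $R_x \in \RMlt$), the only content is to show that each $L_x$ normalizes $\RMlt$. The argument for $\LMlt$ will be the mirror image, using \eqnref{buch-trans2} in place of \eqnref{buch-trans1}.

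To carry this out, I would start from \eqnref{buch-trans1}, which reads $L_x\inv R_z L_x = R_x\inv R_{zx}$. As $z$ ranges over $Q$, the map $z \mapsto zx$ is a bijection of $Q$, so the right-hand side sweeps out the entire set $R_x\inv R_{(Q)}$. This yields the set equality $L_x\inv R_{(Q)} L_x = R_x\inv R_{(Q)}$. Passing to generated subgroups (conjugation being an automorphism of $\Mlt Q$) gives $L_x\inv \RMlt L_x = \sbl{R_x\inv R_y}{y \in Q}$. The short final step is to observe that this last subgroup is simply $\RMlt$: the element $R_x\inv = R_x\inv R_1$ lies in the generating set, so $R_x$ lies in the generated subgroup, and then every $R_y = R_x \cdot (R_x\inv R_y)$ does as well. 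Hence $L_x\inv \RMlt L_x = \RMlt$, so $L_x$ normalizes $\RMlt$.

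What could look like an obstacle, until one sees past it, is the temptation to read Lemma~\lemref{buch-trans} as merely providing the one-sided inclusion $L_x\inv \RMlt L_x \subseteq \RMlt$; that alone would not force $L_x \RMlt L_x\inv \subseteq \RMlt$, and hence would not force normality. The key point is that the lemma in fact delivers the full set \emph{equality} $L_x\inv R_{(Q)} L_x = R_x\inv R_{(Q)}$, and only this stronger form passes cleanly to equality (not mere inclusion) of generated subgroups, and thence to $L_x$ lying in $N_{\Mlt Q}(\RMlt)$.
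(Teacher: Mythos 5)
Your argument is correct and is essentially the paper's own: the paper derives the corollary as an immediate consequence of Lemma \lemref{buch-trans} and Proposition \prpref{trans-char}, the latter being exactly the set equality $L_x\inv R_{(Q)} L_x = R_x\inv R_{(Q)}$ that you extract and then push through to equality of generated subgroups. Your closing remark correctly identifies why the equality (rather than a one-sided inclusion) is the form one needs, which is precisely what Proposition \prpref{trans-char} records.
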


\subsection{Inner mappings}

The stabilizer in $\Mlt Q$ of the neutral element of a loop $Q$ is called
the \emph{inner mapping group} of $Q$:
\[
\Inn Q = (\Mlt Q)_1 = \{ \varphi\in \Mlt Q \, |\, \varphi(1) = 1\} ,
\]
while the \emph{left} and \emph{right inner mapping groups} are defined,
respectively, by
\[
\LMlt_1 = \LMlt \cap \Inn Q
\qquad \text{and}\qquad
\RMlt_1 = \RMlt \cap \Inn Q\,.
\]
Generators for these groups are defined as follows:
\[
L(x,y) = L_{xy}\inv L_x L_y\,,
\qquad
T_x = R_x\inv L_x\,,
\qquad
R(x,y) = R_{yx}\inv R_x R_y
\]
for $x,y\in Q$. Then it turns out \cite{bel, bruck} that
\begin{gather*}
\Inn Q = \sbl{T_x, L(x,y), R(x,y)}{x,y\in Q}\,, \\
\LMlt_1 = \sbl{L(x,y)}{x,y\in Q}\,, \qquad
\RMlt_1 = \sbl{R(x,y)}{x,y\in Q}\,.
\end{gather*}

In Buchsteiner loops, matters are somewhat simpler. By rewriting
\eqnref{buch-trans1} and \eqnref{buch-trans2} in the form
\begin{equation}
\eqnlabel{e14}
[R_z,L_x] = R_z\inv R\inv_x R_{zx} \textand [L_y,R_x] = L_y\inv L_x\inv L_{xy}\,,
\end{equation}
we get

\begin{thm}
\thmlabel{26}
Let $Q$ be a Buchsteiner loop. Then for all $x,y \in Q$,
\[
R(x,y) = [L_x,R_y] = L(y,x)\inv \; .
\]
In particular, $\LMlt_1 = \RMlt_1$.
\end{thm}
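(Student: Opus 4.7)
The plan is to read both equalities directly off equation \eqnref{e14}, with only minor reindexing of variables, and then to match the results against the definitions $R(x,y) = R_{yx}\inv R_x R_y$ and $L(y,x) = L_{yx}\inv L_y L_x$. The commutator convention forced by \eqnref{e14} is $[a,b] = a\inv b\inv ab$, and I will work throughout with this convention.

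First, to obtain $R(x,y) = [L_x, R_y]$, I would specialize $z := y$ in the first half of \eqnref{e14} to get $[R_y, L_x] = R_y\inv R_x\inv R_{yx}$. The right-hand side is exactly $R(x,y)\inv$, and inverting (using $[R_y, L_x]\inv = [L_x, R_y]$) yields the desired equality. Next, for $[L_x, R_y] = L(y,x)\inv$, I would swap the names of $x$ and $y$ in the second half of \eqnref{e14} to obtain $[L_x, R_y] = L_x\inv L_y\inv L_{yx}$, which is $L(y,x)\inv$ by inspection.

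For the ``in particular'' assertion, the equality $R(x,y) = L(y,x)\inv$ shows that every generator of $\RMlt_1$ lies in $\LMlt_1$, and symmetrically every generator of $\LMlt_1$ lies in $\RMlt_1$; hence $\LMlt_1 = \RMlt_1$. I do not anticipate any real obstacle here---the argument is essentially bookkeeping---and the only delicate point is keeping track of the commutator convention when rearranging \eqnref{e14}.
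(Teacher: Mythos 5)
Your proposal is correct and follows exactly the paper's route: the theorem is read off directly from \eqnref{e14} (itself a rearrangement of \eqnref{buch-trans1} and \eqnref{buch-trans2}), with the same commutator convention $[a,b]=a\inv b\inv ab$ and the same matching against the definitions of $R(x,y)$ and $L(y,x)$. The paper leaves the bookkeeping implicit; you have simply spelled it out, and all the steps check.
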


\subsection{Nuclei, the center, and related subloops}

The \emph{left}, \emph{middle}, and \emph{right} nucleus
of a loop $Q$ are defined, respectively, by
\begin{align*}
N_{\lambda} &= N_{\lambda}(Q) =
\{ a \in Q \,|\,  a\cdot xy = ax\cdot y \quad \forall x,y\in Q\}\,, \\
N_{\mu} &= N_{\mu}(Q) =
\{ a \in Q \,|\,  x\cdot ay = xa\cdot y \quad \forall x,y\in Q\}\,, \\
N_{\rho} &= N_{\rho}(Q) =
\{ a \in Q \,|\,  x\cdot ya = xy\cdot a \quad \forall x,y\in Q\},,
\end{align*}
while the intersection
\[
N = N(Q) = N_\lambda \cap N_\mu \cap N_\rho
\]
is called the \emph{nucleus} of $Q$. We observe that
$N_{\lambda}$ is the set of fixed points of $\RMlt_1$,
$N_{\rho}$ is the set of fixed points of $\LMlt_1$, and
$N_{\mu}$ is the set of fixed points of the group
$\sbl{ \lbrack R_x, L_y \rbrack}{x,y\in Q}$.

It is easy to prove that each of the nuclei is a subgroup
(associative subloop) of $Q$.
However, in general, the nuclei need not coincide.

\begin{lem}
\lemlabel{cyclic-assoc}
Let $Q$ be a Buchsteiner loop. For all $x,y,z\in Q$,
\[
x\cdot yz = xy\cdot z
\qquad \Longleftrightarrow \qquad
y\cdot zx = yz\cdot x
\qquad \Longleftrightarrow \qquad
z\cdot xy = zx\cdot y\,.
\]
\end{lem}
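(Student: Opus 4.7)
The plan is to read both equivalences straight off the implication form (\buchimp), which asserts that for all $x,y,z,u$, the equation $xy\cdot z = xu$ holds if and only if $y\cdot zx = ux$. The key observation is that the two associativity statements involved in each equivalence can be matched to the two sides of (\buchimp) by one judicious choice of the parameter $u$.

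For the first equivalence, I would set $u := yz$ in (\buchimp). The hypothesis $xy\cdot z = xu$ then becomes $xy\cdot z = x\cdot yz$, which is exactly the first associativity condition. Simultaneously, the conclusion $y\cdot zx = ux$ becomes $y\cdot zx = yz\cdot x$, which is the second associativity condition. So (\buchimp) applied with this $u$ gives the equivalence between the first and second conditions at a single stroke.

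For the second equivalence, I would apply (\buchimp) after the cyclic relabeling $x\mapsto y$, $y\mapsto z$, $z\mapsto x$, taking $u := zx$. The hypothesis becomes $yz\cdot x = y\cdot zx$ and the conclusion becomes $z\cdot xy = zx\cdot y$, which is precisely the equivalence of the second and third associativity conditions. Chaining the two equivalences then yields the triple equivalence in the statement.

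There is really no serious obstacle here: the only ``trick'' is to notice that writing the associativity condition $x\cdot yz = xy\cdot z$ in the form $xy\cdot z = x(yz)$ lets one read $yz$ as the free parameter $u$ on the right-hand side of (\buchimp), so that a single substitution captures both associativity statements at once. No appeal to Lemma \lemref{buch-trans}, Proposition \prpref{trans-char}, or Theorem \thmref{26} is needed; the implication form (\buchimp) does all the work.
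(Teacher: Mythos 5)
Your proof is correct and is precisely the argument the paper intends: the paper's proof consists of the single sentence that the lemma is an immediate consequence of (\buchimp), and your substitutions ($u = yz$ for the first equivalence, and the cyclic relabeling with $u = zx$ for the second) are exactly the way that implication yields the statement.
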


\begin{proof}
This is an immediate consequence of (\buchimp).
\end{proof}

\begin{cor}
\corlabel{equal-nuc}
In a Buchsteiner loop, $N_{\lambda} = N_{\mu} = N_{\rho}$.
\end{cor}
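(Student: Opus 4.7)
The plan is to deduce each nuclear containment by a direct application of Lemma \lemref{cyclic-assoc}, which says that in a Buchsteiner loop the associator equation $x(yz) = (xy)z$ is invariant under cyclic rotation of its three arguments. Since the membership of an element $a$ in each of $N_\lambda$, $N_\mu$, $N_\rho$ is the statement that a particular one of the three cyclically-related associativity identities holds for all values of the remaining two variables, the equivalences in the lemma will translate directly into the desired equalities of nuclei.

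First I would start with $a\in N_\lambda$, which means $a\cdot xy = ax\cdot y$ for all $x,y\in Q$. Substituting the triple $(a,x,y)$ into the roles $(x,y,z)$ of Lemma \lemref{cyclic-assoc}, the first equation reads precisely $a\cdot xy = ax\cdot y$, so by the lemma the other two equations $x\cdot ya = xy\cdot a$ and $y\cdot ax = ya\cdot x$ also hold for all $x,y$. The former says $a\in N_\rho$ and the latter says $a\in N_\mu$, so $N_\lambda \subseteq N_\mu \cap N_\rho$.

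Next I would run the same argument starting from $a\in N_\rho$, substituting the triple $(x,y,a)$ into $(x,y,z)$, to conclude $N_\rho \subseteq N_\lambda \cap N_\mu$; and from $a\in N_\mu$, substituting $(y,a,x)$ (or equivalently $(x,a,y)$) to conclude $N_\mu \subseteq N_\lambda \cap N_\rho$. Taken together these give $N_\lambda = N_\mu = N_\rho$.

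There is essentially no obstacle here, since all the real work has been done in Lemma \lemref{cyclic-assoc}; the only care needed is to choose the right substitution of variables in each case so that the defining equation of one of the nuclei appears as one of the three cyclically-related identities. The corollary can therefore be stated as an immediate consequence of Lemma \lemref{cyclic-assoc}.
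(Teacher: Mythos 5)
Your proof is correct and is exactly the argument the paper intends: the corollary is stated as an immediate consequence of Lemma \lemref{cyclic-assoc}, and your substitutions spell out precisely how each of the three cyclically-related identities encodes membership in one of the three nuclei. (The paper also notes an alternative route via Theorem \thmref{26} and the fixed-point characterizations of the nuclei, but that is offered only as a second proof.)
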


This corollary also follows from Theorem \thmref{26} and the
characterizations of the nuclei as fixed point sets of subgroups
of $\Inn Q$.

A subloop $S$ of a loop $Q$ is \emph{normal} if it is
invariant under the action of $\Inn Q$, or equivalently,
if it is a block of $\Mlt Q$ containing the neutral
element. Then one can define a quotient structure $Q/S$
over the associated block system.
In general, the nuclei of a loop are not necessarily normal
subloops.

Let $Q$ be a loop, and suppose $\phi\in \Mlt Q$ satisfies
$\phi R_x = R_x \phi$ for all $x\in Q$. Applying both sides
to $1$ gives $\phi(x) = ax$ where $a = \phi(1)$, and so
$\phi = L_a$. However, $L_aR_x = R_xL_a$
for all $x\in Q$ exactly when $a \in N_\lambda$. Similarly,
$\psi\in \Mlt Q$ satisfies $\psi L_x = L_x \psi$ for all
$x\in Q$ if and only if $\psi = R_b$ for some
$b\in N_{\rho}$.

Then we can express the observations of the preceding paragraph
as part (i) of the following.

\begin{lem}
\lemlabel{centralizers}
Let $Q$ be a loop. Then
\begin{enumerate}
\item\quad $C_{\Mlt Q}(\RMlt) = L_{(N_{\lambda})}$
and $C_{\Mlt Q}(\LMlt) = R_{(N_{\rho})}$.
\item\quad If $\RMlt \unlhd \Mlt Q$, then $L_{(N_{\lambda})} \unlhd \Mlt Q$
and $N_{\lambda} \unlhd Q$.
\item\quad If $\LMlt \unlhd \Mlt Q$, then $R_{(N_{\rho})} \unlhd \Mlt Q$
and $N_{\rho} \unlhd Q$.
\end{enumerate}
\end{lem}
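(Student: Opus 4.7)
For part (i), most of the work has already been done in the paragraph immediately preceding the lemma statement. My plan is to observe that a map $\phi \in \Mlt Q$ centralizes $\RMlt$ precisely when $\phi R_x = R_x \phi$ for every $x \in Q$; evaluating at $1$ gives $\phi(x) = \phi(1)\cdot x$, so $\phi = L_a$ with $a = \phi(1)$, and the commutation condition $L_a R_x = R_x L_a$ then reduces to $a(yx) = (ay)x$ for all $y,x$, i.e.\ $a \in N_\lambda$. Conversely, every $L_a$ with $a \in N_\lambda$ centralizes $\RMlt$. The proof that $C_{\Mlt Q}(\LMlt) = R_{(N_\rho)}$ is entirely symmetric, replacing $R_x$ by $L_x$ throughout.

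For part (ii), I would invoke the standard group-theoretic fact that the centralizer of a normal subgroup is normal: since $\RMlt \unlhd \Mlt Q$, part (i) immediately yields $L_{(N_\lambda)} = C_{\Mlt Q}(\RMlt) \unlhd \Mlt Q$. To pass from this to normality of $N_\lambda$ as a subloop of $Q$, I would use the characterization of normal subloops as those invariant under $\Inn Q$. Given $\phi \in \Inn Q$ and $a \in N_\lambda$, normality of $L_{(N_\lambda)}$ in $\Mlt Q$ forces $\phi L_a \phi^{-1} = L_b$ for some $b \in N_\lambda$; applying both sides to $1$ and using $\phi(1) = 1$ yields $\phi(a) = b \in N_\lambda$. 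Since $N_\lambda$ is already known to be a subgroup of $Q$, this invariance under $\Inn Q$ is exactly what is needed to conclude $N_\lambda \unlhd Q$.

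Part (iii) then follows by the obvious left-right dual argument, interchanging the roles of $L$ and $R$ and of $\lambda$ and $\rho$. The whole proof is essentially bookkeeping: the only step requiring any thought is the translation between $L_{(N_\lambda)} \unlhd \Mlt Q$ and $N_\lambda \unlhd Q$, and there the key is simply that $\phi \in \Inn Q$ fixes $1$, so conjugating the translation $L_a$ reads off the image $\phi(a)$ directly from its action on $1$. I do not anticipate any genuine obstacle.
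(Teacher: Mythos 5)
Your proposal is correct and follows essentially the same route as the paper: part (i) is exactly the computation the paper carries out in the paragraph preceding the lemma, and parts (ii) and (iii) rest on the same key fact that the centralizer of a normal subgroup is normal. The only (cosmetic) difference is in the final step: the paper deduces $N_\lambda \unlhd Q$ by noting that orbits of normal subgroups of permutation groups form block systems, whereas you use the equivalent $\Inn Q$-invariance characterization of normal subloops via $\phi L_a \phi\inv = L_b$ evaluated at $1$; both characterizations are stated in the paper and the argument goes through either way.
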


\begin{proof}
Parts (ii) and (iii) follow because the centralizer of a normal
subgroup is again a normal subgroup, and orbits of normal subgroups
of permutation groups form block systems.
\end{proof}

\begin{cor}
\corlabel{15}
Let $Q$ be a Buchsteiner loop. Then $N$ is a normal subloop of $Q$,
and $L_{(N)}$ and $R_{(N)}$ are normal subgroups of $\Mlt Q$.
\end{cor}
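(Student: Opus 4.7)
The plan is to simply assemble the pieces already established in the preceding material. Everything needed is in hand: Corollary \corref{normal} tells us that in a Buchsteiner loop $Q$ both $\LMlt$ and $\RMlt$ are normal in $\Mlt Q$, Lemma \lemref{centralizers} relates these normalities to the nuclei via centralizers, and Corollary \corref{equal-nuc} collapses the three one-sided nuclei into a single object $N$.

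More concretely, I would proceed as follows. First, invoke Corollary \corref{normal} to obtain $\RMlt \unlhd \Mlt Q$. Plugging this into Lemma \lemref{centralizers}(ii) immediately yields $L_{(N_\lambda)} \unlhd \Mlt Q$ and $N_\lambda \unlhd Q$. Symmetrically, $\LMlt \unlhd \Mlt Q$ together with Lemma \lemref{centralizers}(iii) gives $R_{(N_\rho)} \unlhd \Mlt Q$ and $N_\rho \unlhd Q$. Finally, Corollary \corref{equal-nuc} identifies $N_\lambda = N_\mu = N_\rho = N$, so all of the conclusions above can be rewritten in terms of $N$: namely $N \unlhd Q$, $L_{(N)} \unlhd \Mlt Q$, and $R_{(N)} \unlhd \Mlt Q$.

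There is no real obstacle here, since this corollary is a direct synthesis of three earlier results; the only thing to be careful about is the bookkeeping between the one-sided nuclei. In particular, one should note that Lemma \lemref{centralizers} only gives normality of $L_{(N_\lambda)}$ and $R_{(N_\rho)}$, not \emph{a priori} of $L_{(N)}$ and $R_{(N)}$, so it is essential to invoke $N_\lambda = N_\rho = N$ before stating the conclusion. Once this identification is made, the corollary follows in a single line.
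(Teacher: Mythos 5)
Your proof is correct and follows exactly the same route as the paper, which simply cites Corollaries \corref{normal} and \corref{equal-nuc} together with Lemma \lemref{centralizers}; you have merely spelled out the bookkeeping that the paper leaves implicit. The care you take in noting that $N_\lambda = N_\rho = N$ must be invoked before restating the conclusions in terms of $N$ is appropriate and matches the intended argument.
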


\begin{proof}
This follows from Corollaries \corref{normal} and \corref{equal-nuc},
and Lemma \lemref{centralizers}.
\end{proof}

In an arbitrary loop $Q$, the set
\[
C(Q) = \{ a \in Q \,|\, ax = xa, \; \forall x\in Q\}
= \{ a\in Q \,|\, T_a = \id_Q \}
\]
is not necessarily a subloop. The \emph{center} of
$Q$ is defined as
\[
Z(Q) = N(Q)\cap C(Q)\,.
\]
The center is exactly the set of fixed points of the inner
mapping group $\Inn Q$, and thus is a normal, abelian subgroup
of $Q$.

\begin{lem}
\lemlabel{center}
In a Buchsteiner loop $Q$, $Z(Q) = C(Q)$.
\end{lem}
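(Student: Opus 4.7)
The plan is as follows. Since $Z(Q) \subseteq C(Q)$ is immediate from the definition $Z(Q) = N(Q) \cap C(Q)$, I only need to establish the reverse inclusion $C(Q) \subseteq Z(Q)$, which amounts to showing that any element of $C(Q)$ lies in $N$. Fix $a \in C(Q)$, so $au = ua$ for every $u \in Q$. By Corollary \corref{equal-nuc} the three one-sided nuclei coincide with $N$ in a Buchsteiner loop, so it suffices to show $a \in N_\mu$, i.e., $y \cdot az = ya \cdot z$ for every $y, z \in Q$.

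To do this, I would specialize the Buchsteiner law (\buchid) by setting $x = a$, producing
\[
a \ldiv (ay \cdot z) = (y \cdot za) \rdiv a.
\]
Denote this common value by $w$. Then $aw = ay \cdot z$ and $wa = y \cdot za$. Using the commutativity of $a$ to rewrite $ay = ya$ and $za = az$, these become $aw = ya \cdot z$ and $wa = y \cdot az$. But $a \in C(Q)$ also gives $aw = wa$, and therefore $ya \cdot z = y \cdot az$. This is precisely the middle-nuclear identity for $a$, so $a \in N_\mu = N$, and combined with $a \in C(Q)$ this puts $a$ in $Z(Q)$.

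The only real obstacle is selecting the correct specialization of (\buchid): the choice $x = a$ is forced by the observation that with $a$ central, left division by $a$ and right division by $a$ act the same way, so both sides of the Buchsteiner identity represent the same element $w$ that satisfies $aw = wa$. Once that observation is made, the nuclear identity falls out in a single line and no associator calculus, multiplication-group machinery, or appeal to the WWIP-theoretic parts of the paper is required.
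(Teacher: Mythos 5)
Your proof is correct and is essentially the paper's own argument: the paper specializes the translation form \eqnref{buch-trans1} of the Buchsteiner law at the central element and uses $L_c=R_c$ to get $R_zR_c=R_{cz}$, which is exactly your elementwise conclusion $yc\cdot z=y\cdot cz$ placing the element in $N_\mu=N$ via Corollary \corref{equal-nuc}. The only difference is cosmetic (identity form versus translation form), so nothing further is needed.
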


\begin{proof}
Fix $c\in C(Q)$, and in \eqnref{buch-trans1}, take $x = c$.
Since $L_c = R_c$, we have
$R_c\inv R_z R_c = R_c\inv R_{zc}$, and so
$R_z R_c = R_{cz}$ for all $z\in Q$. This says that
$c\in N_{\mu} = N$ (Corollary \corref{equal-nuc}),
and so $c\in Z(Q)$.
\end{proof}

\subsection{Isotopisms}

An \emph{isotopism} of loops $(Q,\ast)$ and $(Q,\cdot)$ with the same
underlying set $Q$ is a triple $(\alpha,\beta,\gamma)$ of permutations
of $Q$ satisfying
\begin{equation}
\eqnlabel{e3}
\alpha(x)\cdot \beta(y) = \gamma(x\ast y)
\end{equation}
for all $x,y\in Q$. In this case,  $(Q,\ast)$ and $(Q,\cdot)$
are said to be \emph{isotopic}. Two types of isotopisms are of primary
interest in loop theory, namely, principal isotopisms and autotopisms.
We begin with the former.

An isotopism $(\alpha,\beta,\gamma)$ is called \emph{principal} if
$\gamma = \id_Q$. In such a case, if $1\in Q$ is the neutral
element of $(Q,\ast)$, and if we set $a = \alpha(1)$ and $b = \beta(1)$,
then \eqnref{e3} becomes
\[
x \ast y = (x \rdiv b) \cdot (a \ldiv y)
\]
for all $x,y\in Q$. Here $\ldiv$ and $\rdiv$ are the left and right
division operations in $(Q,\cdot)$. The loop $(Q,\ast)$ is then
called a \emph{principal isotope} of $(Q,\cdot)$.

Let $(Q,\cdot)$ be a loop and fix $e\in Q$. Define on $Q$ loop operations
$\ast$ and $\circ$ by
\begin{equation}
\eqnlabel{e2}
x\ast y = (x\rdiv e) \cdot y
\qquad \text{and} \qquad
x\circ y = (x \cdot ye)\rdiv e\,.
\end{equation}
Thus $(Q,\ast)$ is a principal isotope of $Q$ with neutral element $e$,
while $1$ is the neutral element of $(Q,\circ)$.

\begin{lem}
\lemlabel{011}
The right translation $R_e: x\mapsto xe$ yields an isomorphism
$(Q,\circ) \cong (Q,\ast)$.
\end{lem}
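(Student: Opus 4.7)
The plan is to observe that $R_e$ is already a bijection of $Q$, so it suffices to verify that it intertwines the two operations, i.e., that
\[
R_e(x \circ y) = R_e(x) \ast R_e(y) \frall x, y \in Q.
\]
This will be a direct computation from the definitions in \eqnref{e2}, using only the defining properties of right division, namely $(a \rdiv e) \cdot e = a$ and $(a \cdot e) \rdiv e = a$.

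Expanding the left-hand side,
\[
R_e(x \circ y) = (x \circ y) \cdot e = \bigl( (x \cdot ye) \rdiv e \bigr) \cdot e = x \cdot ye.
\]
Expanding the right-hand side,
\[
R_e(x) \ast R_e(y) = (xe) \ast (ye) = \bigl( (xe) \rdiv e \bigr) \cdot (ye) = x \cdot (ye).
\]
The two expressions agree, which establishes the claim.

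There is no real obstacle here; the lemma is essentially a bookkeeping observation designed to compare the two natural principal isotopes associated to the element $e$, and the proof reduces to a one-line verification using the cancellation identities for $\rdiv$. The only thing one must be careful about is not to accidentally invoke associativity of $\cdot$, which is not available since $Q$ is merely a loop.
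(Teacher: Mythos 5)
Your proof is correct and follows essentially the same route as the paper: both compute $R_e(x\circ y) = x\cdot ye$ via the cancellation identity $(a\rdiv e)\cdot e = a$ and identify this with $R_e(x)\ast R_e(y) = ((xe)\rdiv e)\cdot ye$ using $(xe)\rdiv e = x$. Nothing is missing.
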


\begin{proof}
Indeed, $R_e(x\circ y) = x \cdot ye = ((xe)\rdiv e) \cdot ye =
R_e(x) \ast R_e(y)$.
\end{proof}

Lemma \lemref{011} clearly has a mirror version, which associates
the operations $x \cdot (e\ldiv y)$ and $e\ldiv (ex \cdot y)$.
In the loop theory literature, the operations $e\ldiv (ex\cdot y)$ and
$(x\cdot ye)\rdiv e$ are sometimes called the ``left derivative'' and
``right derivative'' at $e$, respectively \cite{pflug}. However,
invoking Lemma \lemref{011}, we will call the operation
$(x \cdot ye)\rdiv e$ the \emph{right isotope at $e$} and the operation
$e\ldiv (ex\cdot y)$ the \emph{left isotope at $e$}.  We obviously have

\begin{lem}
\lemlabel{12}
A loop $Q$ satisfies the Buchsteiner law {\rm{(}}\buchid{\rm{)}} if and only
if for every $e \in Q$, the left and right isotopes at $e$ coincide.
\end{lem}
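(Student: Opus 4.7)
The plan is to observe that the equivalence is essentially a trivial relabelling of variables, which is presumably why the authors wrote ``We obviously have.'' First I would write out the two isotope operations explicitly: the left isotope at $e$ is $x \bullet y = e \ldiv (ex \cdot y)$ (the mirror of the second operation in \eqnref{e2}, obtained via the version of Lemma~\lemref{011} alluded to immediately before the statement), while the right isotope at $e$ is $x \circ y = (x \cdot ye) \rdiv e$, directly from \eqnref{e2}.

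Then I would simply compare. The condition ``$x \bullet y = x \circ y$ for all $x,y$'' reads
\[
e \ldiv (ex \cdot y) = (x \cdot ye) \rdiv e \frall x, y \in Q,
\]
and this must hold for every $e \in Q$. Renaming $e \to x$, $x \to y$, $y \to z$, this is exactly
\[
x \ldiv (xy \cdot z) = (y \cdot zx) \rdiv x,
\]
which is the Buchsteiner law (\buchid). Conversely, starting from (\buchid) and renaming $x \to e$, $y \to x$, $z \to y$ recovers the coincidence of the two isotopes at $e$.

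There is no obstacle here; the only thing to be careful about is that the ``left isotope at $e$'' has to be interpreted using the mirror of Lemma~\lemref{011}, so that both sides are genuine loop operations with neutral element $1$ before one asserts their coincidence as operations on $Q$. Once that bookkeeping is done, the equivalence is just a change of variable names in the identity (\buchid).
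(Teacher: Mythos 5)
Your proposal is correct and matches the paper's (implicit) argument: the paper offers no proof beyond ``We obviously have,'' precisely because the coincidence $e \ldiv (ex \cdot y) = (x \cdot ye) \rdiv e$ for all $e,x,y$ is the Buchsteiner identity (\buchid) after the renaming $e \to x$, $x \to y$, $y \to z$. Your remark about interpreting the left isotope via the mirror of Lemma \lemref{011} is exactly the bookkeeping the paper performs in the paragraph preceding the lemma.
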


When dealing with a Buchsteiner loop, we can thus drop the left/right
distinction and refer simply an \emph{isotope at} $e$. We will denote
this isotope by $Q[e]$.

\subsection{Autotopisms}

An isotopism $(\alpha,\beta,\gamma)$ of a loop $(Q,\cdot)$ to
itself is called an \emph{autotopism}. The set $\Atp Q$ of all
autotopisms of a loop $Q$ is a group, and a permutation $\alpha$
of $Q$ is an automorphism, that is, $\alpha \in \Aut Q$,
if and only if $(\alpha,\alpha,\alpha)\in \Atp Q$.

\begin{lem}
\lemlabel{13}
A loop $Q$ satisfies the Buchsteiner law {\rm{(}}\buchid{\rm{)}} if and only if
\[
\Buch(x) = ( L_x, R_x\inv, L_x R_x\inv )
\]
is an autotopism for all $x\in Q$.
\end{lem}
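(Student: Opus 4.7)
The plan is to unwind the definition of autotopism in terms of translations and then appeal directly to Lemma~\lemref{buch-trans}. Recall that a triple $(\alpha,\beta,\gamma)$ of permutations of $Q$ is an autotopism precisely when $\alpha(y)\cdot \beta(z)=\gamma(y\cdot z)$ for all $y,z\in Q$. Applied to $\Buch(x)=(L_x, R_x\inv, L_x R_x\inv)$, this becomes the requirement that
\[
(xy)\cdot R_x\inv(z)=L_x R_x\inv(y\cdot z)\frall y,z\in Q.
\]

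The key step is to rewrite both sides as compositions of translation maps acting on $z$. The left-hand side is $L_{xy}R_x\inv(z)$, because $a\cdot b = L_a(b)$. The right-hand side is $L_x R_x\inv L_y(z)$, because $y\cdot z = L_y(z)$. So, for a fixed $x\in Q$, the triple $\Buch(x)$ is an autotopism if and only if $L_{xy}R_x\inv = L_x R_x\inv L_y$ for every $y\in Q$. Multiplying on the left by $L_x\inv$ and on the right by $R_x$ gives the equivalent identity $L_x\inv L_{xy}=R_x\inv L_y R_x$ for all $y$.

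This last equation is precisely \eqnref{buch-trans2}. By Lemma~\lemref{buch-trans}, \eqnref{buch-trans2} holds for all $x,y\in Q$ if and only if $Q$ satisfies the Buchsteiner law (\buchid). Therefore $\Buch(x)\in \Atp Q$ for all $x\in Q$ if and only if $Q$ is a Buchsteiner loop, which is the desired equivalence.

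There is no real obstacle here; the entire argument is bookkeeping around the translation-map reformulation of (\buchid) already recorded in Lemma~\lemref{buch-trans}. The only thing to watch is the direction in which to read $a\cdot b$ as a translation (one side demands $L_a$, the other $R_b$), so that the autotopism equation collapses cleanly onto one of the two equivalent forms \eqnref{buch-trans1}, \eqnref{buch-trans2} rather than producing a genuinely new identity.
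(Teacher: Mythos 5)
Your proof is correct and takes essentially the same route as the paper: both unwind the autotopism equation for $\Buch(x)$ and identify it with a translation-map form of (\buchid). The paper substitutes $z\mapsto zx$ directly in $(xy)(z\rdiv x)=x((yz)\rdiv x)$, which is exactly your right-multiplication by $R_x$ leading to \eqnref{buch-trans2}; the two arguments are the same computation in different notation.
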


\begin{proof}
The triple $\Buch(x)$ yields an autotopism if and only if
$(xy)(z\rdiv x) = x((yz)\rdiv x)$ for all $x,y,z \in Q$. By replacing
$z$ with $zx$ we get $xy\cdot z = x((y\cdot zx)\rdiv x)$, which is
equivalent to (\buchid).
\end{proof}

The nuclei can be characterized in terms of autotopisms.

\begin{lem}
\lemlabel{nuc-aut}
Let $Q$ be a loop. For $a\in Q$,
\begin{align*}
a\in N_{\lambda} &\Longleftrightarrow (L_a,\id_Q,L_a)\in \Atp Q\,, \\
a\in N_{\mu} &\Longleftrightarrow (R_a,L_a\inv,\id_Q)\in \Atp Q\,, \\
a\in N_{\rho} &\Longleftrightarrow (\id_Q,R_a,R_a)\in \Atp Q\,.
\end{align*}
\end{lem}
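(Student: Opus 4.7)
The plan is to prove all three equivalences by directly unfolding the autotopism condition and making at most one elementary substitution. Recall that a triple $(\alpha,\beta,\gamma)$ is an autotopism of $(Q,\cdot)$ exactly when $\alpha(x)\cdot\beta(y)=\gamma(x\cdot y)$ for all $x,y\in Q$. The strategy for each of the three cases is simply to substitute the specified translations for $\alpha,\beta,\gamma$, simplify, and compare with the nucleus definition.

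For the left nucleus, the triple $(L_a,\id_Q,L_a)$ is an autotopism if and only if $(ax)\cdot y = a\cdot(xy)$ for all $x,y\in Q$, which is by definition the condition $a\in N_\lambda$. For the right nucleus, the triple $(\id_Q,R_a,R_a)$ is an autotopism if and only if $x\cdot(ya)=(xy)\cdot a$ for all $x,y\in Q$, which is by definition the condition $a\in N_\rho$. Both directions of each equivalence are immediate from the definition of autotopism, with no substitution required.

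The middle nucleus case requires a trivial change of variable. The triple $(R_a,L_a\inv,\id_Q)$ is an autotopism precisely when
\[
(xa)\cdot(a\ldiv y) = xy \frall x,y\in Q.
\]
Substituting $y\mapsto ay$ on both sides (so that $a\ldiv y$ becomes $y$) converts this into $(xa)\cdot y = x\cdot(ay)$ for all $x,y\in Q$, which is the defining condition for $a\in N_\mu$. Since $L_a$ is a bijection of $Q$, this substitution is reversible, yielding the converse as well.

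There is no real obstacle here; the only care needed is in the middle case, where one must remember to substitute $y\mapsto ay$ to pass between the form with $a\ldiv y$ (natural for the autotopism) and the form with $ay$ (natural for the nucleus). The other two equivalences follow by merely writing down what the autotopism condition says.
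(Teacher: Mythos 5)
Your proof is correct and follows essentially the same route as the paper: unfold the autotopism condition directly for each triple, with the single substitution $y\mapsto ay$ (equivalently, the paper's reformulation $xy=(xa)\cdot(a\ldiv y)$) handling the middle-nucleus case. Nothing further is needed.
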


\begin{proof}
For instance, $x\cdot ay = xa \cdot y$ for all $x,y \in Q$
if and only if $xy = (xa)\cdot(a\ldiv y)$ for all $x,y \in Q$, which
gives the characterization of $N_{\mu}$. The other cases are similar.
\end{proof}

Let us denote the triples of permutations occurring in the preceding
lemma as follows:
\[
\alpha_{\lambda}(x) = (L_x, \id, L_x)\,,
\qquad
\alpha_{\mu}(x) = (R_x, L_x\inv, \id)\,,
\qquad
\alpha_{\rho}(x) = (\id, R_x, R_x)\,.
\]
Suppose that $Q$ is a loop in which there exist $\eps,\eta \in \{-1,1\}$
and $\xi,\chi \in \{\lambda,\mu,\rho\}$ such that
$\alpha^\eps_\xi(a)\, \alpha^\eta_\chi(a)$ is an autotopism
for all $a\in Q$. It is then clear that
$a \in N_{\xi}(Q) \Leftrightarrow a \in N_{\chi}(Q)$.

Say that an identity can be obtained by \emph{nuclear identification}
if it can be expressed by an autotopism of the form
$\alpha^\eps_\xi(a)\, \alpha^\eta_\chi(a)$. From Lemma \lemref{13},
we see that the Buchsteiner law can be obtained in this form, as
\[
\Buch(x) = (L_x, R_x\inv, L_x R_x\inv) =
(L_x, \id_Q, L_x) (\id_Q, R_x, R_x)\inv =
\alpha_{\lambda}(x) \alpha_{\rho}(x)\inv .
\]
In particular, this implies the already noted fact that
$N_{\lambda} = N_{\rho}$ (Corollary \corref{equal-nuc}).
We have already mentioned in the introduction that the most
frequently studied varieties of loops are those that can be
described by identities which follow from a nuclear identification.
This concept is studied in detail in \cite{dni}.

The following proposition will be crucial to a later description
of isotopes of Buchsteiner loops.

\begin{prop}
\prplabel{atp-cent}
Let $Q$ be a loop. For a permutation $f$ of $Q$,
\begin{enumerate}
\item $(f,\id_Q,f)\in \Atp Q \Leftrightarrow f$ centralizes $R_{(Q)}
\Leftrightarrow f = L_a$ where $a = f(1)\in N_{\lambda}$\,,
\item $(\id_Q,f,f)\in \Atp Q \Leftrightarrow f$ centralizes $L_{(Q)}
\Leftrightarrow f = R_a$ where $a = f(1)\in N_{\rho}$\,.
\end{enumerate}
\end{prop}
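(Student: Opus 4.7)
My plan is to reduce each of the two claims to a direct translation between the autotopism identity and a centralizer condition, then invoke the observation already recorded in the paragraph preceding Lemma \lemref{centralizers}, which characterizes permutations centralizing $R_{(Q)}$ (resp.\ $L_{(Q)}$).

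For part (i), I would first unpack what $(f,\id_Q,f)\in\Atp Q$ means: by \eqnref{e3} it is equivalent to $f(x)\cdot y = f(x\cdot y)$ for all $x,y\in Q$. Rewriting the left-hand side as $R_y f(x)$ and the right-hand side as $fR_y(x)$, the condition becomes $R_y f = fR_y$ for every $y\in Q$, which is precisely the statement that $f$ centralizes $R_{(Q)}$. This establishes the first equivalence of (i).

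For the second equivalence, suppose $f$ centralizes $R_{(Q)}$. Evaluating $R_y f = fR_y$ at the neutral element gives $f(1)\cdot y = f(y)$, so setting $a = f(1)$ we get $f = L_a$. Conversely, if $f = L_a$, then the centralizer condition $L_a R_y = R_y L_a$ applied to an arbitrary $x\in Q$ reads $a\cdot(xy) = (ax)\cdot y$, which by definition says exactly $a\in N_\lambda$; and this is in turn what $(L_a,\id_Q,L_a)\in\Atp Q$ asserts (in line with Lemma \lemref{nuc-aut}). Putting these together yields $(f,\id_Q,f)\in\Atp Q \Leftrightarrow f = L_a$ with $a = f(1)\in N_\lambda$.

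Part (ii) is the left-right mirror of (i): expanding $(\id_Q,f,f)\in\Atp Q$ gives $x\cdot f(y) = f(x\cdot y)$, which rewrites as $L_x f = f L_x$ for all $x\in Q$, i.e., $f$ centralizes $L_{(Q)}$; evaluating at $1$ then forces $f = R_a$ with $a = f(1)$, and $f = R_a$ centralizes $L_{(Q)}$ iff $a\in N_\rho$, again in agreement with Lemma \lemref{nuc-aut}. The proof is essentially a mechanical translation; there is no real obstacle beyond bookkeeping to make sure the element $a = f(1)$ lands in the correct nucleus on each side.
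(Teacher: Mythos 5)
Your proof is correct and follows essentially the same route as the paper: translate the autotopism condition into $R_yf = fR_y$, evaluate at $1$ to get $f = L_{f(1)}$, and identify the centralizing condition for $L_a$ with $a\in N_\lambda$. The only cosmetic difference is that you verify the last step by direct computation where the paper cites Lemmas \lemref{nuc-aut} and \lemref{centralizers}, which amounts to the same calculation.
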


\begin{proof}
$(f,\id_Q,f)$ is an autotopism if and only if for each $x,y\in Q$,
$f(x)y = f(xy)$, that is, $R_y f(x) = f R_y(x)$. This shows the
first equivalence of (i). Take $x = 1$ and set $a = f(1)$ to get
$f(y) = ay$ for all $y$, that is, $f = L_a$. That $a\in N_{\lambda}$
follows from Lemma \lemref{nuc-aut}. Conversely, if $f = L_a$ for
some $a\in N_{\lambda}$, then $f$ centralizes $R_{(Q)}$ by
Lemma \lemref{centralizers}. This establishes (i), and the proof
of (ii) is similar.
\end{proof}

Let us give a classical example how the notion of an autotopism
can be used.

\begin{lem}
\lemlabel{16}
Let $Q$ be a loop, let $(\alpha,\beta,\gamma)\in \Atp Q$ satisfy
$\alpha(1) = 1$ and set $c = \beta(1)$. Then
\begin{enumerate}
\item\quad $\gamma = \beta = R_c \alpha$,
\item\quad $\alpha$ is an isomorphism from $Q$ to the right
isotope at $c$,
\item\quad $\alpha\in \Aut Q$ if and only if $c\in N_{\rho}$.
\end{enumerate}
\end{lem}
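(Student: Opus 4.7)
The plan is to unpack the autotopism relation $\alpha(x)\beta(y) = \gamma(xy)$ by substituting the neutral element into each variable in turn, and then identify the resulting operation with the right isotope at $c$ defined in \eqnref{e2}.

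For (i), setting $x = 1$ in $\alpha(x)\beta(y) = \gamma(xy)$ and using $\alpha(1) = 1$ yields $\beta(y) = \gamma(y)$, so $\beta = \gamma$. Setting $y = 1$ instead gives $\alpha(x)\,c = \gamma(x)$, that is, $\gamma = R_c\alpha$. This disposes of (i).

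For (ii), I would apply (i) to rewrite the autotopism identity as
\[
\alpha(x)\cdot\alpha(y)c = \alpha(xy)\cdot c
\]
for all $x,y\in Q$. Solving for $\alpha(xy)$ by right division by $c$ gives
\[
\alpha(xy) = \bigl(\alpha(x)\cdot\alpha(y)c\bigr)\rdiv c = \alpha(x)\circ\alpha(y),
\]
where $\circ$ is the right isotope at $c$ from \eqnref{e2}. Since $\alpha$ is a bijection of $Q$, it is therefore an isomorphism from $(Q,\cdot)$ onto $(Q,\circ)$, proving (ii).

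For (iii), I would use (ii) as follows: $\alpha\in\Aut Q$ if and only if $\alpha$ is an isomorphism from $(Q,\cdot)$ to itself, and since it is already an isomorphism onto $(Q,\circ)$, this holds if and only if the two operations coincide, i.e.\ $x\circ y = xy$ for all $x,y$. Unwinding the definition, $x\circ y = xy$ means $(x\cdot yc)\rdiv c = xy$, equivalently $x\cdot yc = xy\cdot c$ for all $x,y\in Q$, which by definition is $c\in N_{\rho}$. There is no serious obstacle here; the lemma is essentially a bookkeeping exercise, and the only subtlety is recognizing the operation on the right-hand side of the autotopism identity as the right isotope at $c$, so that (ii) and (iii) follow without further computation.
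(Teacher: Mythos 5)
Your proof is correct and follows essentially the same route as the paper: substitute the neutral element to get (i), recognize $\alpha(x)\cdot(\alpha(y)c)=\alpha(xy)\cdot c$ as the isomorphism onto the right isotope at $c$ for (ii), and use surjectivity of $\alpha$ to reduce (iii) to the coincidence of the two operations, i.e.\ $c\in N_{\rho}$. No issues.
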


\begin{proof}
We have $\gamma(x) = \alpha(1)\beta(x) = \beta(x)$ for all $x \in Q$,
and $\alpha(x)\cdot c = \beta(x)$, establishing (i). Next,
$\alpha(x)(\alpha(y)\cdot c) = \alpha(xy)\cdot c$ for all $x,y \in Q$,
which gives (ii). If $c \in N_\rho$, then we get
$\alpha(x)\alpha(y) = \alpha(xy)$ for all $x,y \in Q$. On the other hand
the latter equality implies $c \in N_\rho$ since $\alpha$ is a permutation
of $Q$. This proves (iii).
\end{proof}

In the loop theory literature, the permutation $\alpha$ described in
the preceding lemma is sometimes called a ``right pseudoautomorphism''
with companion $c$.

At this point, and throughout the rest of the paper, it will be useful
to introduce notation for left and right inverses. In a loop $Q$,
let
\[
I(x) = x\ldiv 1 \qquad\text{and}\qquad J(x) = 1\rdiv x
\]
for all $x\in Q$. (Here we follow the notation of \cite{bel}.)
Thus $x\cdot I(x) = J(x)\cdot x = 1$,
and $J = I\inv$. In general, $I(x)$ and $J(x)$ can differ,
but when they coincide, we write $x\inv = I(x) = J(x)$.
In that case, $(x\inv)\inv = x$.

\begin{cor}
\corlabel{buch-pseudo}
Let $Q$ be a Buchsteiner loop, and fix $x,y\in Q$. Then
\begin{enumerate}
\item\quad $L(x,y)$ is an isomorphism from $Q$ to the
isotope at $(J(y)\rdiv x)\cdot xy$,
\item\quad $L(x,y)\in \Aut Q$ if and only if
$(J(y)\rdiv x)\cdot xy \in N$.
\end{enumerate}
\end{cor}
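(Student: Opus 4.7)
The plan is to realize $L(x,y)$ as the first component of a naturally occurring autotopism and then appeal to Lemma \lemref{16}. By Lemma \lemref{13}, $\Buch(a) = (L_a, R_a\inv, L_a R_a\inv)$ is an autotopism for every $a$, so the composition $\Buch(xy)\inv \Buch(x)\Buch(y)$ is in $\Atp Q$. A direct multiplication of the three triples shows that its first component is $L_{xy}\inv L_x L_y = L(x,y)$, which automatically satisfies $L(x,y)(1) = L_{xy}\inv L_x(y) = L_{xy}\inv(xy) = 1$. So the hypothesis of Lemma \lemref{16} applies with $\alpha = L(x,y)$ and $\beta = R_{xy} R_x\inv R_y\inv$.

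Next I would compute the companion $c = \beta(1)$. Unwinding $R_y\inv(1) = J(y)$ from the definition $J(z) = 1\rdiv z$, then $R_x\inv J(y) = J(y)\rdiv x$, and finally $R_{xy}$ applied to this yields $c = (J(y)\rdiv x)\cdot xy$. Lemma \lemref{16}(ii) then says $L(x,y)$ is an isomorphism from $Q$ onto the right isotope at $c$; since $Q$ is Buchsteiner, Lemma \lemref{12} identifies this with $Q[c]$, proving (i). For (ii), Lemma \lemref{16}(iii) gives $L(x,y) \in \Aut Q$ iff $c \in N_\rho$, and by Corollary \corref{equal-nuc} we have $N_\rho = N$, so (ii) follows.

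There is no real obstacle here beyond careful bookkeeping: the only place one could slip is in the computation $R_{xy}R_x\inv R_y\inv(1) = (J(y)\rdiv x)\cdot xy$, where one must keep straight which inverse ($I$ or $J$) is produced by $R_y\inv$ versus $L_y\inv$ and apply the definitions of $\rdiv$ in the correct order.
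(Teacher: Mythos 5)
Your proof is correct and is essentially the paper's own argument: the paper likewise forms the autotopism $\Buch(xy)\inv\Buch(x)\Buch(y)$, notes its first component $L(x,y)$ fixes $1$, and invokes Lemma \lemref{16}. You have merely made explicit the computation of the companion $c = R_{xy}R_x\inv R_y\inv(1) = (J(y)\rdiv x)\cdot xy$ and the identification $N_\rho = N$, which the paper leaves implicit.
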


\begin{proof}
We compose Buchsteiner autotopisms to get
\[
\Buch(xy)\inv \Buch(x) \Buch(y) =
(L(x,y),\, R_{xy} R_x\inv R_y\inv,\, R_{xy} L_{xy}\inv L_x R_x\inv L_y R_y\inv ).
\]
The first component fixes $1$ and so the lemma applies.
\end{proof}

Incidentally, the equality of the
second and third components of the autotopism in the preceding proof
gives another proof of Theorem \thmref{26}.

A loop $Q$ is said be an $A_{\ell}$-\emph{loop} if $\LMlt_1 \le \Aut Q$, that is,
if $L(x,y) \in \Aut Q$ for all $x,y \in Q$. An $A_r$-\emph{loop} is
similarly defined, and we will say that a loop satisfying both properties
is an $A_{\ell,r}$-\emph{loop}. By Theorem \thmref{26}, the $A_{\ell}$ and $A_r$
properties are equivalent in Buchsteiner loops. Corollaries \corref{15} and
\corref{buch-pseudo} imply that a Buchsteiner loop is an $A_{\ell,r}$-loop
if and only if $Q/N$ satisfies $(J(y)\rdiv x)\cdot xy = 1$ for all $x,y$.
Later we will show that $Q/N$ satisfies a much stronger property, and so
every Buchsteiner loop will turn out to be an $A_{\ell,r}$-loop.

In the meantime, Corollary \corref{buch-pseudo} gives us a useful family of
automorphisms. Let
\[
\tag{\Eaut}
E_x = L(J(x),x) = L_{J(x)} L_x \,,
\]
for each $x$ in a loop $Q$.

\begin{lem}
\lemlabel{E}
Let $Q$ be a Buchsteiner loop. Then for each $x\in Q$,
\begin{enumerate}
\item\quad $E_x = R(x,J(x))\inv = [L_x , R_{J(x)}]\inv$,
\item\quad $E_x \in \Aut Q$,
\item\quad $E_{J(x)} = E_x = E_{I(x)}$,
\item\quad $E_x I^n(x) = I^{n-2}(x)$ and $E_x J^n(x) = J^{n+2}(x)$ for each integer $n$,
\item\quad $E_x = [L_x\inv, R_x\inv]$,
\item\quad $E_x = L_x R_x R(x,x) R_x\inv L_x\inv = R_x L_x R(x,x) L_x\inv R_x\inv$, and
\item\quad $E_x\inv = L_x R_x L(x,x) R_x\inv L_x\inv = R_x L_x L(x,x) L_x\inv R_x\inv$
\end{enumerate}
\end{lem}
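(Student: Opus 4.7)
The plan is to derive (i) and (ii) directly from the earlier results, then identify (v) as the essential Buchsteiner calculation, and obtain the remaining parts as mechanical consequences of (v), Theorem \thmref{26}, and the autotopism $\Buch(x)$.

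For (i), I would observe that since $J(x)\cdot x = 1$, the inner mapping $L(J(x),x) = L_{J(x)\cdot x}^{-1} L_{J(x)} L_x$ collapses to $L_{J(x)} L_x = E_x$, so Theorem \thmref{26} yields immediately $E_x = R(x,J(x))^{-1} = [L_x, R_{J(x)}]^{-1}$. By the same cancellation $R(x,J(x)) = R_{J(x)\cdot x}^{-1} R_x R_{J(x)} = R_x R_{J(x)}$, so the alternate expression $E_x = R_{J(x)}^{-1} R_x^{-1}$, i.e.\ $E_x(y) = (y\rdiv x)\rdiv J(x)$, is also available and will be used repeatedly below. For (ii), Corollary \corref{buch-pseudo}(ii) applied to $L(J(x),x)$ requires the element $(J(x)\rdiv J(x))\cdot(J(x)\cdot x) = 1\cdot 1 = 1$ to lie in $N$, which is trivial.

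The heart of the lemma is (v). With the convention $[a,b]=a^{-1}b^{-1}ab$ fixed by \eqnref{e14}, the claim $E_x = [L_x^{-1}, R_x^{-1}]$ reads $L_x R_x L_x^{-1} R_x^{-1} = R_{J(x)}^{-1} R_x^{-1}$. Canceling $R_x^{-1}$ on the right reduces this to $L_x^{-1} R_{J(x)} L_x = R_x^{-1}$; evaluated at $y$, this is $x\ldiv(xy\cdot J(x)) = y\rdiv x$, which is precisely the Buchsteiner law (\buchid) with $z = J(x)$, after using $J(x)\cdot x = 1$. Spotting this single substitution is the only real step; everything else is deduction.

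The remaining parts follow cleanly. For (iv), the alternate form from (i) gives $E_x(x) = (x\rdiv x)\rdiv J(x) = 1\rdiv J(x) = J^2(x) = I^{-2}(x)$ at once, and an induction (in both directions) on $n$ using that $E_x\in \Aut Q$ fixes $1$, together with $J^{n+1}(x)\cdot J^n(x) = 1 = I^n(x)\cdot I^{n+1}(x)$, propagates the base case to all integers. For (iii), I would apply $\Buch(x)\in \Atp Q$ from Lemma \lemref{13} with $a = I(x)$, $b = y$: the autotopism identity $(xI(x))\cdot(y\rdiv x) = x\cdot((I(x)\cdot y)\rdiv x)$ collapses via $xI(x)=1$ to $I(x)\cdot y = (x\ldiv(y\rdiv x))\cdot x$, and multiplying on the left by $x$ identifies $L_x L_{I(x)}(y) = x\cdot(I(x)\cdot y)$ with $L_x R_x L_x^{-1} R_x^{-1}(y) = E_x(y)$ by (v). Hence $E_x = L_x L_{I(x)} = L_{J(I(x))} L_{I(x)} = E_{I(x)}$ since $J(I(x)) = x$, and applying the same derivation with $x$ replaced by $J(x)$ (and $I(J(x)) = x$) gives $E_{J(x)} = L_{J(x)} L_x = E_x$. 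Finally, Theorem \thmref{26} supplies $R(x,x) = [L_x, R_x]$ and $L(x,x) = R(x,x)^{-1}$, so the four expressions in (vi) and (vii) telescope in $\Mlt Q$ to $[L_x^{-1}, R_x^{-1}] = E_x$ and its inverse, respectively. The only real obstacle is finding the substitution $z = J(x)$ for (v); once (v) is in hand, the rest is bookkeeping.
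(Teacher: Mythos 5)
Your proof is correct, and every step checks out against the machinery available before the lemma (Theorem \thmref{26}, Corollary \corref{buch-pseudo}, Lemma \lemref{13}, and \eqnref{buch-trans1}); the only genuine difference from the paper is the dependency order among parts (iii), (iv), and (v). The paper proves (iii) first, using only the automorphism property (ii): it expands $J(x)(x\cdot I(x)y) = E_x(I(x)y) = E_xI(x)\cdot E_x(y)$ and cancels $L_{J(x)}$ to get $E_x = L(x,I(x)) = E_{I(x)}$; part (iv) is then a one-line computation from (iii), and (v) is obtained afterwards from the substitution $xy = xI(x)\cdot xy$ into (\buchimp), which gives $R_x = L_{I(x)}R_xL_x$ and hence $L_xL_{I(x)} = [L_x\inv,R_x\inv]$, invoking $E_x = L_xL_{I(x)}$ from the proof of (iii). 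You invert this order: you prove (v) first, directly and independently, by observing that $[L_x\inv,R_x\inv] = R_{J(x)}\inv R_x\inv$ reduces after cancellation to $L_x\inv R_{J(x)}L_x = R_x\inv$, which is \eqnref{buch-trans1} at $z = J(x)$; you then recover $E_x = L_xL_{I(x)}$ (hence (iii)) from the autotopism $\Buch(x)$ combined with (v), and you replace the paper's one-line proof of (iv) with a base case plus induction using (ii). Both routes ultimately hinge on substituting an inverse of $x$ into the Buchsteiner law; yours has the small advantage that (v) stands on its own without first establishing (iii), while the paper's treatment of (iii) and (iv) is shorter. Parts (i), (ii), (vi), (vii) are handled identically in both.
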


\begin{proof}
Part (i) is a specialization of Theorem \thmref{26}
to the present setting. Part (ii) follows from
Corollary \corref{buch-pseudo}(ii). For (iii), we use
part (ii) to compute
\[
J(x)(x\cdot I(x)y) =
E_x (I(x)y) = E_x I(x)\cdot E_x (y) = J(x)\cdot E_x (y)\,.
\]
Canceling, we have $E_x = L(x,I(x)) = E_{I(x)}$, and the
other equality of (iii) follows from replacing $x$ with $J(x)$.
For (iv), we use (iii) to compute $E_x I^n(x) = E_{I^{n-2}(x)} I^n(x)
= I^{n-2}(x)\cdot I^{n-1}(x) I^n(x) = I^{n-2}(x)$, and
the other equality follows from $J = I\inv$.

Since $xy = x I(x)\cdot xy$, applying (\buchimp) gives
$yx = I(x) (xy\cdot x)$, that is, $R_x = L_{I(x)} R_x L_x$.
Multiplying on the left by $L_x$ and rearranging, we have (v).
For (vi), we use Theorem \thmref{26} and (v):
$R(x,x) = L_x\inv R_x\inv L_x R_x = R_x\inv L_x\inv E_x L_x R_x$.
The mirror of this argument yields the other equality of (vi).
Finally, (vii) is obtained from inverting (vi) and using
Theorem \thmref{26}.
\end{proof}

In \S\secref{elements} we shall need another easy general result about
autotopisms:

\begin{lem}
\lemlabel{18}
Let $Q$ be a loop, and let $\alpha$ and $\beta$ be permutations of $Q$.
Suppose that $\beta(1) = 1$. Then the triple $(\alpha,\beta,\beta)$ is
an autotopism if and only if $\alpha = \beta \in \Aut Q$.
\end{lem}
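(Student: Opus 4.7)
The plan is to prove the two directions separately, with the nontrivial direction being very short and requiring only evaluation at the neutral element.

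For the ``if'' direction, assume $\alpha = \beta \in \Aut Q$. Then for all $x,y \in Q$, $\alpha(x)\beta(y) = \beta(x)\beta(y) = \beta(xy)$, which is exactly the autotopism identity \eqnref{e3} for the triple $(\alpha,\beta,\beta)$. Also, any automorphism of a loop fixes the neutral element, so $\beta(1) = 1$ holds automatically.

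For the ``only if'' direction, suppose $(\alpha,\beta,\beta)$ is an autotopism and $\beta(1) = 1$. The autotopism identity asserts that $\alpha(x)\beta(y) = \beta(xy)$ for all $x,y \in Q$. The key step is to specialize $y = 1$: this yields $\alpha(x)\cdot\beta(1) = \beta(x\cdot 1)$, i.e., $\alpha(x) \cdot 1 = \beta(x)$, so $\alpha(x) = \beta(x)$ for every $x \in Q$. Thus $\alpha = \beta$, and substituting this back into the autotopism identity gives $\beta(x)\beta(y) = \beta(xy)$ for all $x,y \in Q$, which together with the fact that $\beta$ is a permutation is exactly the statement that $\beta \in \Aut Q$.

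There is no real obstacle here; the argument is a one-line substitution. The only point worth emphasizing is that because $\beta$ is already assumed to be a permutation, once the multiplicativity $\beta(xy) = \beta(x)\beta(y)$ is established, no further checking is required to conclude $\beta \in \Aut Q$.
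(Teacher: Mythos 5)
Your proof is correct and follows essentially the same route as the paper: the paper's one-line computation $\alpha(x) = \alpha(x)\cdot 1 = \alpha(x)\beta(1) = \beta(x\cdot 1) = \beta(x)$ is precisely your specialization $y=1$, and the remaining observations (the converse direction and the multiplicativity of $\beta$) are the same routine checks the paper leaves implicit.
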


\begin{proof}
Only the direct implication needs a proof. If $(\alpha,\beta,\beta)$
is an autotopism, then $\alpha(x) = \alpha(x)\cdot 1 = \alpha(x)\beta(1)
= \beta(x\cdot 1) = \beta(x)$ for all $x\in Q$.
\end{proof}

\section{Special elements and CC loops}
\seclabel{elements}

An element $a$ of a loop $Q$ is said to have the
\emph{left inverse property} (LIP), or to be an
\emph{LIP element}, if there exists $b \in Q$ such
that $L_a\inv = L_b$, that is, $L_a L_b = L_b L_a
= \id_Q$. Applying this to $1\in Q$ gives
$ab = ba = 1$, and so $b = I(a) = J(a) = a\inv$.
The left inverse property can be thus expressed as
\[
\tag{\LIP}
L_a L_{a\inv} = L_{a\inv} L_a = L(a\inv,a) = \id_Q\,.
\]
In particular, $a$ is an LIP element if and only if $a\inv$
is an LIP element.

Similarly, $a\in Q$ is said to have the
\emph{right inverse property} (RIP),
or to be an \emph{RIP element}, if
\[
\tag{\RIP}
R_a R_{a\inv} = R_{a\inv} R_a = R(a,a\inv) = \id_Q\,.
\]
If $a\in Q$ is both an LIP and RIP element, then we will refer
to it simply as an \emph{inverse property} (IP) \emph{element}.

An element $a$ in a loop $Q$ is said to be \emph{flexible}
if $a\cdot xa = ax \cdot a$ for all $x \in Q$, that is,
if and only if
\[
\tag{\Flex}
L_a R_a = R_a L_a\,.
\]
Setting $x = J(a)$ and canceling shows that each flexible
element $a$ satisfies $I(a) = J(a)$.

An element $a$ in a loop $Q$ is said to be
\emph{left alternative} if $a\cdot ax = a^2 x$ for all $x \in Q$,
and \emph{right alternative} if $xa\cdot a = xa^2$ for all $x\in Q$.
Equivalently, these are given, respectively, by
\[
(\LAlt) \qquad L_a^2 = L_{a^2} \qquad\qquad\qquad
(\RAlt) \qquad R_a^2 = R_{a^2}\,.
\]

An element $a$ in a loop $Q$ is said to be an \emph{extra} element if
it satisfies $a(y\cdot za) = (ay \cdot z)a$ for all $y,z \in Q$, that
is, if and only if the triple of permutations
\[
\tag{\Extra}
\Ex(a) = ( L_a , \, R_a\inv , \, R_a\inv L_a )
\]
is an autotopism of $Q$. Setting $z = 1$, one
has that each extra element is flexible. Setting
$z = a\inv$ shows that each extra element is an LIP
element, and setting $y = a\inv$ gives that
each extra element is also an RIP element.

\begin{prop}
\prplabel{22}
Let $Q$ be a Buchsteiner loop. For an element $a\in Q$, each of the
following is equivalent: (i) $a$ has the LIP, (ii) $a$ has the RIP,
(iii) $a$ is flexible, (iv) $a$ is left alternative, (v) $a$ is
right alternative, (vi) $a$ is extra.
\end{prop}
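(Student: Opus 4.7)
The plan is to use flexibility (iii) as a pivot: I will show (iii) $\Leftrightarrow$ (vi) directly from the autotopism characterizations, and then show each of (i), (ii), (iv), (v) separately implies (iii). Since (vi) implies each of (i)--(v) in any loop, this closes all six properties into a single equivalence class.

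The equivalence (iii) $\Leftrightarrow$ (vi) is essentially free from Lemma \lemref{13}. The triples $\Buch(a) = (L_a, R_a\inv, L_a R_a\inv)$ and $\Ex(a) = (L_a, R_a\inv, R_a\inv L_a)$ agree in their first two components, and $\Buch(a) \in \Atp Q$ for all $a$. Hence $\Ex(a) \in \Atp Q$ precisely when $L_a R_a\inv = R_a\inv L_a$, which is in turn equivalent to $L_a R_a = R_a L_a$.

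The four remaining implications (i), (ii), (iv), (v) $\Rightarrow$ (iii) are one-line specializations of \eqnref{buch-trans1} and \eqnref{buch-trans2}. Substituting $y = a$ in \eqnref{buch-trans2} gives $R_a\inv L_a R_a = L_a\inv L_{a^2}$; if $a$ is left alternative then $L_{a^2} = L_a^2$, and the right side collapses to $L_a$, yielding flexibility. Substituting $y = a\inv$ instead (which is well-defined under LIP, since then $a \cdot a\inv = 1$) gives $R_a\inv L_{a\inv} R_a = L_a\inv L_1 = L_a\inv$; replacing $L_{a\inv}$ by $L_a\inv$ via LIP gives the same conclusion. The cases (ii) and (v) are symmetric, using \eqnref{buch-trans1} with $z = a\inv$ and $z = a$ respectively.

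For the reverse direction, (vi) $\Rightarrow$ (i), (ii), (iii) is already recorded in the text preceding the statement, by specializing the extra identity $a(y \cdot za) = (ay \cdot z)a$ to $z = 1$, $z = a\inv$, and $y = a\inv$. The implications (vi) $\Rightarrow$ (iv), (v) come from setting $x = a$ or $y = a$ in the same identity and using flexibility to repartition the parentheses. I anticipate no substantive obstacle; the real content of the proposition lies in the Buchsteiner-specific step of the previous paragraph, and even there each derivation is immediate from the translation-form identities.
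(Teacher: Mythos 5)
Your proof is correct, but it follows a different route from the paper's for the bulk of the equivalences. The paper handles (i) $\Leftrightarrow$ (ii) $\Leftrightarrow$ (iii) in one stroke by observing (via Lemma \lemref{E}) that the single map $E_a$ can be written as $L_{J(a)}L_a$, as $R_{J(a)}\inv R_a\inv$, and as $[L_a\inv,R_a\inv]$, so that each of LIP, RIP and flexibility is literally the statement $E_a=\id_Q$; it then gets (iii) $\Leftrightarrow$ (iv) $\Leftrightarrow$ (v) as a two-way equivalence from the cyclic associativity of Lemma \lemref{cyclic-assoc} (take $x=y=a$ there), and treats (iii) $\Leftrightarrow$ (vi) exactly as you do, by comparing third components of $\Buch(a)$ and $\Ex(a)$. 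You instead prove only the one-way implications (i), (ii), (iv), (v) $\Rightarrow$ (iii) by direct substitution into \eqnref{buch-trans1} and \eqnref{buch-trans2}, and close the cycle through (vi) $\Rightarrow$ (i)--(v). Your route is more elementary --- it needs nothing beyond Lemma \lemref{buch-trans} and Lemma \lemref{13}, and in particular avoids the $E_x$ machinery entirely --- at the cost of being logically asymmetric and of requiring the extra verification that an extra element is left and right alternative in an arbitrary loop (which does work: in $a(a\cdot za)=(a^2\cdot z)a$ two applications of flexibility turn the left side into $(a\cdot az)a$, and cancelling $R_a$ gives left alternativity; the paper sidesteps this by deducing (iv) and (v) from (iii) inside the Buchsteiner loop). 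The paper's argument is shorter given its lemmas and makes the conceptual point that all of (i)--(iii) are the single condition $E_a=\id_Q$, which matters later since $E_x$ recurs throughout. One cosmetic slip: the extra identity has variables $y,z$, so your substitutions for (vi) $\Rightarrow$ (iv), (v) should read $y=a$ and $z=a$, not ``$x=a$ or $y=a$''; the computation you intend is nonetheless valid.
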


\begin{proof}
Since $E_a = R_{J(a)}\inv R_a\inv = [L_a\inv , R_a\inv ]$ by
Lemma \lemref{E}, we have the equivalence of (i), (ii), and (iii).
The equivalence of (iii), (iv), and (v)
follows from Lemma \lemref{cyclic-assoc}.

We have already noted that (vi) implies (iii).
Conversely, if (iii) holds, that is, if $a$ is flexible, then
$L_a R_a^{-1} = R_a^{-1} L_a$, and so by examining the third components
of autotopisms, we have $\Buch(a) = \Ex(a)$. Thus (iii) implies (vi).
\end{proof}

An element $a$ of a loop $Q$ is said to be a \emph{Moufang} element if it
satisfies $a (xy\cdot a) = ax\cdot ya$ for all $x,y\in Q$, that is, if the
triple
\[
\Mouf(a) = ( L_a , \, R_a , \, L_a R_a )
\]
is an autotopism. It is immediate that every Moufang element is flexible.

\begin{lem}
\lemlabel{mouf}
In a Buchsteiner loop $Q$, an element $a$ is Moufang if and only if it
is extra and $a^2 \in N(Q)$.
\end{lem}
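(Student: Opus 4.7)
The plan is to realize both $\Mouf(a)$ and $\Ex(a)$ as autotopisms and observe that their quotient has the shape $(\id_Q,f,f)$, which by Proposition \prpref{atp-cent} exactly detects membership in the right nucleus $N_\rho$ (which equals $N$ by Corollary \corref{equal-nuc}). So the bulk of the proof is a short piece of autotopism arithmetic, leveraged by Proposition \prpref{22} which in a Buchsteiner loop collapses flexibility, extra-ness, and right alternativity for a single element.

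For the forward direction, assume $a$ is Moufang. Then $a$ is flexible (setting $x=1$ in the Moufang identity), so by Proposition \prpref{22} $a$ is extra and $R_a^2 = R_{a^2}$. Compute
\[
\Mouf(a)\,\Ex(a)\inv = \bigl(L_a,R_a,L_aR_a\bigr)\bigl(L_a\inv,R_a,L_a\inv R_a\bigr) = \bigl(\id_Q,\,R_a^2,\,L_aR_aL_a\inv R_a\bigr).
\]
Flexibility ($L_aR_a=R_aL_a$) collapses the third component to $R_a^2$, and $R_a^2=R_{a^2}$. Hence $(\id_Q,R_{a^2},R_{a^2})\in\Atp Q$, and Proposition \prpref{atp-cent}(ii) (or equivalently Lemma \lemref{nuc-aut}) yields $a^2\in N_\rho = N$.

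For the converse, assume $a$ is extra and $a^2\in N$. Since $a$ is extra, it is flexible (hence $R_{a^2}=R_a^2$), and $\Ex(a)=(L_a,R_a\inv,R_a\inv L_a)$ is an autotopism. Since $a^2\in N\subseteq N_\rho$, Lemma \lemref{nuc-aut} gives that $(\id_Q,R_{a^2},R_{a^2})\in\Atp Q$. Multiply:
\[
\Ex(a)\,(\id_Q,R_{a^2},R_{a^2}) = \bigl(L_a,\,R_a\inv R_a^2,\,R_a\inv L_a R_a^2\bigr) = \bigl(L_a,\,R_a,\,R_a\inv L_a R_a^2\bigr),
\]
and flexibility rewrites the third component as $R_a\inv(R_aL_a)R_a = L_aR_a$. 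Thus the product is precisely $\Mouf(a)$, which is therefore an autotopism.

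I do not expect a real obstacle here; the only thing to watch is that Proposition \prpref{22} is invoked correctly to pass between ``flexible'', ``extra'', and ``$R_a^2=R_{a^2}$'', and that the equivalence $a^2\in N \Leftrightarrow (\id_Q,R_{a^2},R_{a^2})\in\Atp Q$ uses the Buchsteiner identification $N_\rho=N$ of Corollary \corref{equal-nuc}.
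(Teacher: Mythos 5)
Your proof is correct and follows essentially the same route as the paper: compose the Moufang and extra autotopisms and recognize the quotient as a nuclear autotopism via Lemma \lemref{nuc-aut} and Corollary \corref{equal-nuc}. The only difference is cosmetic --- the paper computes $\Mouf(a)\Ex(a) = (L_{a^2},\id_Q,L_{a^2})$ and lands in $N_\lambda$ using left alternativity, while you compute $\Mouf(a)\Ex(a)\inv = (\id_Q,R_{a^2},R_{a^2})$ and land in $N_\rho$ using flexibility and right alternativity (and note that flexibility comes from setting $y=1$, not $x=1$, in the Moufang identity).
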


\begin{proof}
Since a Moufang element is flexible, it is extra by Proposition \prpref{22}.
Then $\Mouf(a) \Ex(a) = ( L_a^2 , \id_Q , L_a^2 )
= ( L_{a^2} , \id_Q , L_{a^2} )$, since $a$ is left alternative. By
Lemma \lemref{nuc-aut} and Corollary \corref{equal-nuc},
$a^2\in N_{\lambda} = N$. Conversely, if $a$ is extra and $a^2 \in N$,
then $( L_{a^2} , \id_Q , L_{a^2} ) \Ex(a)\inv
= ( L_a^2 L_a\inv , R_a\inv , L_a^2 L_a\inv R_a ) = \Mouf(a)$ is an
autotopism, and so $a$ is Moufang.
\end{proof}

Neither the set of extra elements nor the set of Moufang elements in
a Buchsteiner loop is necessarily a subloop; a counterexample for both cases
can be found in \cite{rings}.

A loop is called a \emph{Moufang} loop if every element is Moufang, and
a loop is called an \emph{extra} loop if every element is extra.
A loop is extra if and only if it is Moufang and every square is
in the nucleus \cite{CR}. Thus by the lemma, a Buchsteiner loop is
extra if and only if it is Moufang. On the other hand, we have the
following.

\begin{lem}
\lemlabel{21}
Every extra loop is a Buchsteiner loop.
\end{lem}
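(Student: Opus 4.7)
The plan is to invoke Lemma \lemref{13} as a criterion: $Q$ satisfies (\buchid) if and only if the triple $\Buch(x) = (L_x, R_x\inv, L_x R_x\inv)$ is an autotopism of $Q$ for every $x \in Q$. So it suffices, given an extra loop $Q$, to produce the Buchsteiner autotopism at each element from the data already in hand, namely the extra autotopisms $\Ex(x) = (L_x, R_x\inv, R_x\inv L_x)$.

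Comparing $\Buch(x)$ and $\Ex(x)$ componentwise, the first two entries are literally identical; the only gap is between the third components $L_x R_x\inv$ and $R_x\inv L_x$. These coincide precisely when $L_x R_x = R_x L_x$, which is the flexibility condition (\Flex) for $x$. This is the only thing to check.

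Here the extra hypothesis does all the work. As was observed right after the definition of extra elements (setting $z = 1$ in the extra identity collapses $(ay \cdot z)a = a(y \cdot za)$ to $(ay)a = a \cdot ya$), every extra element is flexible. In an extra loop every element is extra, so every element is flexible, and hence $L_x R_x\inv = R_x\inv L_x$ for all $x$. Therefore $\Buch(x) = \Ex(x) \in \Atp Q$, and Lemma \lemref{13} yields that $Q$ is a Buchsteiner loop.

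There is really no serious obstacle in this argument: the content is just the observation that the extra and Buchsteiner autotopism-triples have the same first two components and agree in the third as soon as flexibility is available, which comes for free from extra-ness. The only thing one has to be careful about is not confusing this with the harder converse direction (characterizing which Buchsteiner loops are extra), which is governed by Lemma \lemref{mouf} and the comment that a Buchsteiner loop is extra iff it is Moufang.
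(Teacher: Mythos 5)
Your argument is correct and is exactly the paper's proof: both rest on the observation that every extra element is flexible (via $z=1$ in the extra identity), so $L_xR_x\inv = R_x\inv L_x$ and hence $\Buch(x)=\Ex(x)\in\Atp Q$, which by Lemma \lemref{13} gives the Buchsteiner law. You have merely spelled out the componentwise comparison that the paper leaves implicit.
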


\begin{proof}
Since each element $x$ of an extra loop is flexible, that is,
$L_x R_x = R_x L_x$, we have $\Buch(x) = \Ex(x)$.
\end{proof}

Are there any non-extra Buchsteiner loops? We have observed that
in such a loop there have to exist elements without ``nice''
properties. This rules out Moufang loops, but
it does not rule out CC loops. Now, a loop is extra if and only
if it is both Moufang and CC. On the other hand, a CC loop
$Q$ is extra if and only if (A) $x^2\in N(Q)$ for all $x\in Q$,
and (B) every element is IP (or LIP, or RIP, or flexible,
or left or right alternative). We shall show below that a CC loop
$Q$ is a Buchsteiner loop if and only if condition (A) holds.
This gives a vast class of examples of Buchsteiner loops, since
every loop with $|Q:N|=2$ is a CC loop \cite{gr}. These loops can
be derived from groups in a constructive way \cite{d2} and they are
never extra. A non-CC Buchsteiner loop will be constructed in
\S\secref{extend}.

CC loops were defined independently by Soikis \cite{soi} and by Goodaire
and Robinson \cite{gr}. Proofs for the basic properties established
in these papers can be also found elsewhere, see e.g.~\cite{ccm}.
CC loops are those loops satisfying the LCC and RCC laws:
\begin{gather}
\tag{\LCC} x\cdot yz = ((xy)\rdiv x)\cdot xz \\
\tag{\RCC} zy\cdot x = zx\cdot (x\ldiv (yx)) \,.
\end{gather}
These are respectively equivalent to certain triples being
autotopisms:
\begin{gather}
\tag{\LCC$^{\prime}$}
\mathbf{L}(x) = ( R_x\inv L_x , \, L_x , \, L_x ) \\
\tag{\RCC$^{\prime}$}
\mathbf{R}(x) = ( R_x , \, L_x\inv R_x , \, R_x )
\end{gather}

\begin{lem}\lemlabel{23}
A Buchsteiner loop is an LCC loop if and only if it is an RCC loop.
\end{lem}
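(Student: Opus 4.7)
The plan is to establish Buchsteiner together with (\LCC) $\Rightarrow$ (\RCC) by a direct equational argument, and then obtain the converse by passing to the opposite loop $(Q,\cdot^{\mathrm{op}})$ with $x\cdot^{\mathrm{op}} y = yx$. A short rewriting shows that the Buchsteiner identity (\buchid) is self-dual: translating it into $Q^{\mathrm{op}}$'s operations and then swapping the universally quantified $y$ and $z$ reproduces (\buchid). The same translation sends (\LCC) on $Q^{\mathrm{op}}$ to $(zy)x = (zx)(x\ldiv(yx))$ on $Q$, which is exactly (\RCC). Hence the two directions are interchanged by opposition, and only one need be proved.

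For the forward implication, fix $x,y,z \in Q$ and set $v = x\ldiv(zx)$, so that $xv = zx$ and $(xv)\rdiv x = z$. Applying (\LCC) with parameters $(x,v,y)$ gives
\[
x(vy) \;=\; ((xv)\rdiv x)(xy) \;=\; z(xy).
\]
Now use (\buchimp) twice. If $w$ is the unique element with $(yx)v = yw$, then by (\buchimp) this is equivalent to $x(vy) = wy$, so $wy = z(xy)$. If $w'$ is the unique element with $(yz)x = yw'$, then again by (\buchimp) this is equivalent to $z(xy) = w'y$, so $w'y = z(xy)$. Hence $wy = w'y$ forces $w = w'$, yielding
\[
(yz)x \;=\; yw' \;=\; yw \;=\; (yx)v \;=\; (yx)(x\ldiv(zx)),
\]
which is (\RCC).

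The main obstacle is identifying the correct auxiliary substitution. Direct autotopism calculus---forming products $\Buch(x)\mathbf{L}(x)^{\pm 1}$ and $\mathbf{L}(x)\Buch(x)^{\pm 1}$---yields triples of the form $(\alpha,\beta,\alpha)$ or $(\alpha,\beta,\beta)$ whose interpretation via Lemma \lemref{16} (and its mirror) only reconfirms the isomorphism $T_x = R_x\inv L_x\colon Q \to Q[x]$ already supplied by (\LCC), without producing the triple $\mathbf{R}(x)$ needed for (\RCC). The equational route succeeds because (\buchimp) acts as a pivot that converts the $L$-translation identity (\LCC) into the $R$-translation identity (\RCC) after pre-multiplication by $y$, and the crux is the symmetric use of $y$ on both sides of (\buchimp).
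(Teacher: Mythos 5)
Your proof is correct, and it takes essentially the same route as the paper's: a short equational argument that uses the Buchsteiner implication to convert (\LCC) into (\RCC) via the substitution $y\mapsto x\ldiv(yx)$ (the paper applies (\buchimplong) once where you apply (\buchimp) twice with auxiliary elements $w,w'$ and cancel). The only real difference is that you dispatch the converse by passing to the opposite loop, whereas the paper simply notes that its chain of equivalences is reversible; both are valid.
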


\begin{proof}
Using (\buchimplong), we have that in Buchsteiner loops,
(\LCC) is equivalent to
$zx\cdot y = z((xy)\rdiv x)\cdot x$. Replacing $y$
with $x\ldiv (yx)$, we get
$zy\cdot (x\ldiv (yx)) = zx \cdot y$, which is (\RCC).
The argument is clearly reversible.
\end{proof}

\begin{prop}
\prplabel{25}
Let $Q$ be a conjugacy closed loop. Then $Q$ is a Buchsteiner loop if and only
if $x^2 \in N(Q)$ for every $x \in Q$.
\end{prop}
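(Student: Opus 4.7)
My plan is to prove both directions by composing the LCC autotopism $\mathbf{L}(x)$, the RCC autotopism $\mathbf{R}(x)$, the candidate Buchsteiner triple $\Buch(x)$, and the nuclear autotopism $\alpha_\mu(x^2)$. As preparation I would extract from the LCC and RCC autotopism equations the two identities
\[
L_x R_y L_x\inv = R_{xy} R_x\inv \textand R_x L_y R_x\inv = L_{yx} L_x\inv,
\]
valid in every CC loop (each is a direct unpacking of the corresponding autotopism evaluated at a generic argument). Specialising to $y = x$ yields the pivot relations $R_{x^2} = L_x R_x L_x\inv R_x$ and $L_{x^2} = R_x L_x R_x\inv L_x$.

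The central computation is the triple of permutations $\Buch(x) \cdot \mathbf{R}(x) \cdot \mathbf{L}(x)\inv$, evaluated componentwise with no assumption on $x$. A short check shows its third component collapses to $\id_Q$, its first reduces to $R_{x^2}$ via the first pivot relation, and its middle entry is $R_x\inv L_x\inv R_x L_x\inv$. For the forward direction, if $Q$ is Buchsteiner then $\Buch(x)$ is an autotopism by Lemma \lemref{13}, so the whole product is an autotopism. An autotopism of shape $(\alpha, \beta, \id_Q)$ must satisfy $\alpha(a)\beta(b) = ab$, so $\alpha = R_{x^2}$ forces $\beta = L_{x^2}\inv$, and the surviving equation $(ax^2)c = a(x^2 c)$ is precisely $x^2\in N_\mu$ by Lemma \lemref{nuc-aut}. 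In a Buchsteiner loop $N_\mu = N$ by Corollary \corref{equal-nuc}, giving $x^2 \in N$.

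For the converse, assume $x^2\in N$, so that $\alpha_\mu(x^2) = (R_{x^2}, L_{x^2}\inv, \id_Q)$ is an autotopism. I would verify that $\alpha_\mu(x^2) \cdot \mathbf{L}(x) \cdot \mathbf{R}(x)\inv$ equals $\Buch(x)$ componentwise, so that $\Buch(x)$ is a product of autotopisms and hence itself an autotopism. The first and third components match after a one-line manipulation with $R_{x^2} = L_x R_x L_x\inv R_x$. The main obstacle is the middle component, namely $L_{x^2}\inv L_x R_x\inv L_x = R_x\inv$. My approach is to rearrange the second pivot relation as $L_x R_x\inv L_x = R_x\inv L_{x^2}$, and then to use $x^2\in N_\lambda$, which by Lemma \lemref{centralizers}(i) says $L_{x^2}$ centralises every right translation, so that $L_{x^2}\inv$ commutes past $R_x\inv$ and the resulting $L_{x^2}\inv L_{x^2}$ pair cancels to leave $R_x\inv$.
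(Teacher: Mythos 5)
Your proof is correct, and it rests on the same underlying strategy as the paper's — compose the Buchsteiner triple with the CC autotopisms $\mathbf{L}(x)$ and $\mathbf{R}(x)$ until what remains is a nuclear autotopism — but your decomposition is genuinely different and costs some efficiency. The paper computes the single product $\mathbf{L}(x)\Buch(x)\inv\mathbf{R}(x) = (\id_Q,\, L_xR_xL_x\inv R_x,\, L_xR_xL_x\inv R_x)$, whose shape $(\id_Q,f,f)$ is exactly what Proposition \prpref{atp-cent} handles: it is an autotopism precisely when $f = R_a$ with $a = f(1) = x^2 \in N_\rho$, and $f = R_{x^2}$ holds automatically in any CC loop by your first pivot relation. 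Both implications therefore fall out of one computation. Your product $\Buch(x)\mathbf{R}(x)\mathbf{L}(x)\inv$ puts the identity in the third slot and lands on the $N_\mu$-characterization instead; this disposes of the forward direction cleanly, but the argument does not simply reverse — for the converse you would need to know a priori that the middle word $R_x\inv L_x\inv R_x L_x\inv$ equals $L_{x^2}\inv$ — which is why you are driven to the second, separate computation $\alpha_\mu(x^2)\mathbf{L}(x)\mathbf{R}(x)\inv = \Buch(x)$ with its extra commutation step through $x^2 \in N_\lambda$. All of those steps check out: the two pivot relations are correct unpackings of (\LCC) and (\RCC), and the middle-component manipulation via $R_x\inv L_{x^2} = L_xR_x\inv L_x$ and Lemma \lemref{centralizers}(i) is valid. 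So your argument is complete; it is essentially a two-pass version of what the paper's choice of composition achieves in a single pass.
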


\begin{proof}
The CC loop $Q$ is a Buchsteiner loop if and only if
\[
\mathbf{L}(x)\Buch(x)\inv \mathbf{R}(x)
= (I \,, L_x R_x L_x\inv R_x \,, L_x R_x L_x\inv R_x )
\]
is an autotopism for each $x\in Q$. By Proposition \prpref{atp-cent}
and Corollary \corref{equal-nuc}, this holds if and only if
$L_x R_x L_x\inv R_x = R_{x^2}$ where $x^2\in N(Q)$.
\end{proof}

\section{Inverse properties}
\seclabel{IP}

In a loop $Q$, the conditions
\begin{equation}
\eqnlabel{e16}
I^m(xy)I^{m+1}(x) = I^m(y)
\qquad\text{and}\qquad
J^{m+1}(x) J^{m}(yx) = J^m(y)\,,
\end{equation}
for all $x,y\in Q$,
are equivalent. Indeed, if the former one holds, then for $x' = I^{m+1}(x)$ and $y' = I^m(xy)$
we obtain $J^{m+1}(x')J^m(y'x') = J^{m+1}(x')J^m(I^m(y)) = xy = J^m(y')$.

A loop satisfying these conditions is called an $m$-\emph{inverse loop}. These
loops were introduced by Karkli\v n\v s and Karkli\v n \cite{kk} as a generalization
of the weak \cite{osb} and cross \cite{arz} inverse properties, which
correspond to the cases $m=-1$ and $m=0$, respectively.

By reading $I^m(xy)I^{m+1}(x) = I^m(y)$ as $R_{I^{m+1}(x)}I^m L_x = I^m$,
we get another pair of equivalent forms
\begin{equation}
\eqnlabel{e17}
R_{I^{m+1}(x)} = I^mL_x\inv J^m \textand L_{J^{m+1}(x)} = J^m R_x\inv I^m\,,
\end{equation}
for all $x\in Q$.
In particular, we have the following expressions for the left and right division
operations in $m$-inverse loops:
\begin{equation}
\eqnlabel{e18}
x\ldiv y = J^m(I^m(y) I^{m+1}(x))
\qquad\text{and}\qquad
y\rdiv x = I^m(J^{m+1}(x) J^m(y)).
\end{equation}

Let us reformulate some basic properties as they appear in \cite{kk} and in later works.

\begin{lem}
\lemlabel{31}
Let $Q$ be an $m$-inverse loop. Then
\begin{enumerate}
\item\quad $Q$ is also a $(-2m-1)$-inverse loop, and
\item\quad $I^{3m+1}\in \Aut Q$.
\end{enumerate}
\end{lem}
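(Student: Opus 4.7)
The plan is to derive (i) by a single substitution into the defining identity of an $m$-inverse loop, and then obtain (ii) as a short algebraic consequence by feeding (i) back into the original identity. For (i), I would start from \eqnref{e16} in the form $I^m(xy) \cdot I^{m+1}(x) = I^m(y)$, valid for all $x,y \in Q$, and substitute the pair $(x,y) \mapsto (I^m(xy),\, I^{m+1}(x))$. The inner product $I^m(xy) \cdot I^{m+1}(x)$ that reappears is again $I^m(y)$ by the identity itself, so the left-hand side collapses to $I^m(I^m(y)) \cdot I^{m+1}(I^m(xy)) = I^{2m}(y) \cdot I^{2m+1}(xy)$, while the right-hand side becomes $I^m(I^{m+1}(x)) = I^{2m+1}(x)$. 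Swapping the roles of $x$ and $y$ then yields $I^{2m}(x) \cdot I^{2m+1}(yx) = I^{2m+1}(y)$, which is precisely the equivalent form $J^{n+1}(x) J^n(yx) = J^n(y)$ of the $n$-inverse law \eqnref{e16} at $n = -2m-1$. This proves (i).

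For (ii), I would feed the identity just derived back into \eqnref{e16}. Setting $W = I^m(xy)$, \eqnref{e16} gives $W \cdot I^{m+1}(x) = I^m(y)$. Applying (i) in the form $I^{2m}(b) \cdot I^{2m+1}(ab) = I^{2m+1}(a)$ with $a = W$ and $b = I^{m+1}(x)$, the product $ab$ simplifies to $I^m(y)$, so the left-hand side becomes $I^{2m}(I^{m+1}(x)) \cdot I^{2m+1}(I^m(y)) = I^{3m+1}(x) \cdot I^{3m+1}(y)$, while the right-hand side is $I^{2m+1}(W) = I^{3m+1}(xy)$. Hence $I^{3m+1}(xy) = I^{3m+1}(x) \cdot I^{3m+1}(y)$, and since $I^{3m+1}$ is a bijection of $Q$, it lies in $\Aut Q$.

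The only real difficulty is identifying the correct substitution for (i). Once the pair $(x,y) \mapsto (I^m(xy),\, I^{m+1}(x))$ is chosen, the two applications of \eqnref{e16}---one hidden inside each outer iterate of $I$---cause the identity to fold up cleanly, and everything else is routine exponent arithmetic on $I$ and $J$. Part (ii) then follows from (i) by a second, entirely analogous substitution, with no further obstacle.
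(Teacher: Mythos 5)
Your proof is correct and follows essentially the same route as the paper: part (i) is the paper's computation $I^{2m}(x)I^{2m+1}(yx)=I^{2m+1}(y)$ obtained by two applications of the defining identity (you organize it as a substitution, the paper as a forward calculation), and your part (ii), with $a=I^m(xy)$ and $b=I^{m+1}(x)$, is exactly the paper's substitution $y'=I^m(xy)$, $x'=I^{m+1}(x)$ into the $(-2m-1)$-inverse law. Nothing further is needed.
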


\begin{proof}
We have $J^{-(2m+1)+1} (x) J^{-(2m+1)}(yx) = I^{2m}(x) I^{2m+1}(yx) = I^m
(I^m(yx)\cdot I^{m+1}(y))\cdot I^{m+1}(I^m(yx)) = I^m (I^{m+1}(y)) = J^{-(2m+1)}(y)$,
which yields (i).
Setting $x' = I^{m+1}(x)$ and $y' = I^m(xy)$ in $I^{2m}(x') I^{2m+1}(y'x') = I^{2m+1}(y')$
yields $I^{3m+1}(x) I^{3m+1}(y) = I^{3m+1}(xy)$, and so (ii) holds.
\end{proof}

\begin{lem}
\lemlabel{m-inv-atp}
Let $Q$ be an $m$-inverse loop. If $(\alpha,\beta,\gamma)$ is an autotopism,
then so are
\[
(J^{m+1}\beta I^{m+1}, J^m\gamma I^m, J^m \alpha I^m) \textand
(I^m\gamma J^m, I^{m+1}\alpha J^{m+1}, I^m\beta J^m)\,.
\]
\end{lem}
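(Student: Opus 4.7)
The plan is to verify each of the two triples as an autotopism by componentwise substitution, using the $m$-inverse law in a single coordinated way: once to transform the autotopism hypothesis and once to parametrize the free variables of the target.

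To begin, I would rewrite \eqnref{e16} in symmetric form: whenever $A,B,C\in Q$ satisfy $AB=C$,
\[
I^m(C)\cdot I^{m+1}(A) = I^m(B) \textand J^{m+1}(B)\cdot J^m(C) = J^m(A).
\]
Applying this with $A=\alpha(x)$, $B=\beta(y)$, $C=\gamma(xy)$ to the autotopism relation $\alpha(x)\beta(y)=\gamma(xy)$ yields the two intermediate identities
\begin{align*}
I^m\gamma(xy)\cdot I^{m+1}\alpha(x) &= I^m\beta(y)\,,\\
J^{m+1}\beta(y)\cdot J^m\gamma(xy) &= J^m\alpha(x)\,,
\end{align*}
valid for all $x,y\in Q$.

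For the first claimed autotopism, I would introduce fresh variables $A,B\in Q$ and set $y = I^{m+1}(A)$ and $x = I^m(AB)$. The $m$-inverse identity $I^m(AB)\cdot I^{m+1}(A) = I^m(B)$ forces $xy = I^m(B)$, so substituting into the second intermediate identity delivers
\[
J^{m+1}\beta I^{m+1}(A)\cdot J^m\gamma I^m(B) = J^m\alpha I^m(AB),
\]
which is exactly the required autotopism equation. The second triple is handled by the mirror substitution $x = J^{m+1}(B)$, $y = J^m(AB)$ in the first intermediate identity, where $J^{m+1}(B)\cdot J^m(AB) = J^m(A)$ supplies $xy = J^m(A)$ and produces $I^m\gamma J^m(A)\cdot I^{m+1}\alpha J^{m+1}(B) = I^m\beta J^m(AB)$.

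There is no genuine obstacle in this proof beyond bookkeeping: one use of \eqnref{e16} rewrites the autotopism equation in inverted form, and a second use parametrizes the inputs appropriately. Since every step relies only on the $m$-inverse law, no further properties of $Q$ are needed; the most delicate point is simply tracking which occurrence of $I$ or $J$ carries the exponent $m$ rather than $m+1$.
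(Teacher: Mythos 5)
Your proof is correct and follows essentially the same route as the paper's: both arguments use the $m$-inverse law exactly twice, once to invert the autotopism relation $\alpha(x)\beta(y)=\gamma(xy)$ and once to substitute $x = I^m(AB)$, $y = I^{m+1}(A)$ (resp.\ the mirror substitution) so that the product collapses correctly. The only difference is cosmetic -- the paper substitutes first and inverts second, while you invert first and substitute second -- and the resulting identities are identical.
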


\begin{proof}
In $\alpha(u) \beta(v) = \gamma(uv)$, take $u = I^m(xy)$, $v = I^{m+1}(x)$,
to obtain $\alpha I^m(xy)\cdot \beta I^{m+1}(x) = \gamma I^m(y)$.
Apply $J^m$ and then multiply on the left by
$J^{m+1} \beta I^{m+1}(x)$ to get
$J^{m+1} \beta I^{m+1}(x) \cdot J^m \gamma I^m(y)
= J^{m+1} \beta I^{m+1}(x)\cdot J^m (\alpha I^m(xy)\cdot \beta I^{m+1}(x))
= J^m \alpha I^m(xy)$. The other case is similar.
\end{proof}

A loop is called an \emph{IP-loop} if it consists solely of IP-elements.
In an IP-loop $(xy)\inv = y\inv x\inv$ since
$(xy)\inv x = (xy)\inv(xy \cdot y\inv) = y\inv$,
and so IP-loops satisfy the identities of the weak inverse property:
\begin{equation*} \tag{WIP}
xI(yx) = I(y) \textand J(xy) x = J(y).
\end{equation*}
However the cross inverse property $(xy)I(x) = y$ holds in an IP-loop
only when the loop is commutative. While general $m$-inverse loops have some
common algebraic properties \cite{kk}, they nevertheless
seem rather to be a topic of a combinatorial nature (e.g., see~\cite{ks}
and the subsequent generalizations to quasigroups \cite{sk1,sk2}).

We shall call a loop a \emph{\WIP{k}}, $k \ge 1$, if it is an
$m$-inverse loop, where $m = ((-2)^k -1)/3$. (Note that for $k=1$
we get WIP.)

\begin{prop}
\prplabel{33}
Let $Q$ be a \WIP{k} loop for some $k\geq 1$. Then
\begin{enumerate}
\item\quad $Q$ is a \WIP{h} loop for every $h \ge k$,
\item\quad $I^{2^k} \in \Aut Q$.
\end{enumerate}
\end{prop}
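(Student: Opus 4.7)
The plan is to derive both parts directly from Lemma \lemref{31}, which is the right tool since the definition of $\WIP{k}$ is tailored exactly to the arithmetic of Lemma \lemref{31}(ii). Let me write $m_k = ((-2)^k-1)/3$ for the $m$-index attached to $\WIP{k}$.

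The first thing I would do is verify the two small arithmetic identities that make everything tick. First, $-2 m_k - 1 = m_{k+1}$: a direct computation gives
\[
-2 m_k - 1 = \frac{-2((-2)^k - 1)}{3} - 1 = \frac{(-2)^{k+1} + 2 - 3}{3} = \frac{(-2)^{k+1}-1}{3} = m_{k+1}.
\]
Second, $3 m_k + 1 = (-2)^k$, which is immediate from the definition of $m_k$.

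For (i), I would induct on $h \ge k$, the base case $h = k$ being the hypothesis. For the inductive step, assume $Q$ is a $\WIP{h}$-loop, i.e., an $m_h$-inverse loop. By Lemma \lemref{31}(i) this makes $Q$ a $(-2m_h - 1)$-inverse loop, and by the first arithmetic identity above, $-2m_h - 1 = m_{h+1}$. Hence $Q$ is an $m_{h+1}$-inverse loop, i.e., a $\WIP{h+1}$-loop, completing the induction.

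For (ii), I would apply Lemma \lemref{31}(ii) directly to $Q$ as an $m_k$-inverse loop: it yields $I^{3m_k + 1} \in \Aut Q$, which by the second arithmetic identity reads $I^{(-2)^k} \in \Aut Q$. If $k$ is even, $(-2)^k = 2^k$ and we are done. If $k$ is odd, $(-2)^k = -2^k$, so $I^{-2^k} \in \Aut Q$; since $\Aut Q$ is a group, the inverse $I^{2^k} = (I^{-2^k})\inv$ is also an automorphism.

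There is no real obstacle here; both parts reduce to bookkeeping once the recursion $m_{k+1} = -2 m_k - 1$ and the identity $3 m_k + 1 = (-2)^k$ are recognized. The only mild subtlety is the parity issue at the very end of (ii), which is handled by remarking that $\Aut Q$ is closed under inverses.
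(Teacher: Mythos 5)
Your proof is correct and follows essentially the same route as the paper: both reduce the statement to Lemma \lemref{31} via the identities $-2m_k-1=m_{k+1}$ and $3m_k+1=(-2)^k$, with the sign/parity issue in (ii) absorbed by the fact that $\Aut Q$ is closed under inverses (the paper writes this as $|3m+1|=2^k$). Your version just spells out the induction and the parity remark that the paper leaves implicit.
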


\begin{proof}
If $3m = (-2)^k-1$, then $|3m+1| = 2^k$ and $3(-2m-1) = -2(3m)-3= (-2)^{k+1}-1$.
The statement thus follows from Lemma \lemref{31}.
\end{proof}

We shall refer to \WIP{2} as the \emph{doubly weak inverse property}
and write WWIP:
\[
\tag{\WWIP}
I(xy)I^2(x) = I(y) \qquad\text{and}\qquad J^2(x)J(yx) = J(y)\,.
\]
In the next section we shall show that every Buchsteiner loop
is a WWIP loop. In \S\secref{WIP}, we will examine
Buchsteiner loops that satisfy WIP.

\section{Doubly weak inverse property}
\seclabel{WWIP}

Throughout this section, let $Q$ be a Buchsteiner loop.
Our main goal is to show that $Q$ has WWIP.

Setting $z = I(xy)$ in (\buchid) and rearranging gives the
first equality of
\begin{equation}
\eqnlabel{workhorse}
I(x)x = y\cdot I(xy)x  \qquad \text{and} \qquad x J(x) = xJ(yx)\cdot y\,,
\end{equation}
and the second equality is verified similarly.

\begin{lem}
\lemlabel{techlemma}
For all $x\in Q$,
\begin{enumerate}
\item\quad $J(x)^2 \cdot x = I(x)$ and $x\cdot I(x)^2 = J(x)$,
\item\quad $I(x)\rdiv x = x \ldiv J(x)$,
\item\quad $J(x)^2 = I(x)^2$,
\item\quad $I(x) x = I(x)^2 x^2$ and $xJ(x) = x^2 J(x)^2$,
\item\quad $I(x)x = J(x) J^2(x)$ and $xJ(x) = I^2(x) I(x)$,
\item\quad $I(x)\cdot xJ(x) = J(x)$ and $I(x)x\cdot J(x) = I(x)$,
\item\quad $J(x)I(x)\cdot x = J(x)$ and $x\cdot J(x)I(x) = I(x)$,
\item\quad $J(x) \rdiv x = x \ldiv I(x)$,
\item\quad $I^2(x) = xJ(x)\cdot x$ and $J^2(x) = x\cdot I(x)x$.
\end{enumerate}
\end{lem}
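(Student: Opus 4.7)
The plan is to derive the nine identities in the order (ii), (ix), (i), (iii), (iv), (v), (vi), (vii), (viii), leaning on three tools: the implication (\buchimp), cyclic associativity (Lemma~\lemref{cyclic-assoc}), and the automorphism $E_x$ from Lemma~\lemref{E}. I would first record the basic identity $x^2\cdot J(x) = x$ (and its mirror $I(x)\cdot x^2 = x$), which falls out of (\buchimp) by taking $y = x$, $u = 1$: then $x^2 z = x$ iff $x\cdot zx = x$, and the second equation forces $zx = 1$, hence $z = J(x)$. For (ii), setting $y = I(x)$ in (\buchid) gives $I(x)\cdot zx = (x\ldiv z)x$ for all $z$; taking $z = J(x)$ yields $I(x) = (x\ldiv J(x))x$, which is (ii). For (ix), applying (\buchimp) with $y = I(x)$, $z = xJ(x)$ forces $u = J(x)$, so $ux = 1$, and the companion equation $y\cdot zx = ux$ becomes $I(x)\cdot(xJ(x)\cdot x) = 1$; hence $xJ(x)\cdot x = I(x)\ldiv 1 = I^2(x)$. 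A mirror argument gives $J^2(x) = x\cdot I(x)x$.

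For (i), I would invoke Theorem~\thmref{26} with $y = J(x)$. Because $J(x)\cdot x = 1$, both $R(x,J(x)) = R_{J(x)x}\inv R_x R_{J(x)}$ and $L(J(x),x) = L_{J(x)x}\inv L_{J(x)}L_x$ collapse to products of just two basic translations, so the relation $R(x,J(x)) = L(J(x),x)\inv$ becomes the operator identity $L_{J(x)}L_x R_x R_{J(x)} = \id_Q$. Applied to $J(x)$ and cancelling on the left, this gives $x\cdot(J(x)^2 x) = 1$, so $J(x)^2 x = I(x)$; the parallel argument with $y = I(x)$ in $R(y,x) = L(x,y)\inv$ yields $x\cdot I(x)^2 = J(x)$. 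Identity (iii) then follows by substituting $J(x)$ for $x$ in $I(x)\cdot x^2 = x$: using $I(J(x)) = x$ this reads $x\cdot J(x)^2 = J(x)$, and comparison with the second half of (i) yields $J(x)^2 = I(x)^2$ by left-cancellation.

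The remaining identities fall out via cyclic associativity (Lemma~\lemref{cyclic-assoc}), aided by the action $E_x(I^n(x)) = I^{n-2}(x)$; in particular $E_x(x) = J^2(x)$ unpacks as $J(x)\cdot x^2 = J^2(x)$ (and mirror $x^2\cdot I(x) = I^2(x)$). The recurring move is to exhibit an associative triple whose associativity is transparent---both sides reducing to $1$ or to a previously known element---then rotate it cyclically and read off the new identity. Specifically: (iv) uses that $(x, I(x)^2, x)$ is associative (both sides equal $1$ by (i) and (iii)), which after rotation gives $I(x)^2 x^2 = (I(x)^2 x)\cdot x = I(x)x$. Identity (v) uses that $(J(x), x^2, J(x))$ is associative (both sides equal $1$ using $x^2J(x) = x$ and $J(x)x^2 = J^2(x)$), which after rotation gives $J(x)J^2(x) = J(x)^2 x^2 = I(x)x$ by (iii) and (iv). For (vi), the mirror of (v), $xJ(x) = I^2(x)I(x)$, certifies associativity of $(xJ(x), x, I(x))$; rotating to $(I(x), xJ(x), x)$ and using $I(x)\cdot I^2(x) = 1$ from (ix) forces $I(x)\cdot xJ(x) = J(x)$.

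Finally, (vii) is a short deduction from (vi) via (\buchimp): with $(x,y,z) \mapsto (J(x), I(x), x)$, the equation $I(x)\cdot xJ(x) = J(x)$ forces the associated $u$ to equal $1$, so $J(x)I(x)\cdot x = J(x)$. Identity (viii) then comes for free: by (vii), the element $c := J(x)I(x)$ satisfies both $cx = J(x)$ and $xc = I(x)$, whence $J(x)\rdiv x = c = x\ldiv I(x)$. Mirror arguments throughout handle the symmetric halves of (iv)--(vii) and (ix). No single step is intricate; the real challenge is bookkeeping, since the nine identities form a tightly interlocked web that must be unwound in exactly this order for each invocation of cyclic associativity to rest on already-established facts.
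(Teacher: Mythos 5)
Your proof is correct; I traced each step and the dependency order you chose is internally consistent. The toolbox is the same as the paper's (Lemma \lemref{E}, the implication (\buchimp), Theorem \thmref{26}), but the execution differs for roughly half the items. The paper obtains (i) and (ix) by evaluating $E_x\inv = R_xR_{J(x)}$ at $J(x)$ and at $x$ via Lemma \lemref{E}(i),(iv); your (i) is the same computation unwound from Theorem \thmref{26} at $y=J(x)$, while your (ix) comes straight from (\buchimp) with $y=I(x)$, $z=xJ(x)$, which is arguably the most economical derivation of that item. Your (ii) works directly from (\buchid) rather than from the translation form \eqnref{buch-trans1}, and your (iii) (substitute $J(x)$ into $I(x)\cdot x^2=x$ and compare with (i)) replaces the paper's deduction of (iii) from (i) and (ii). The most substantive divergence is in (iv)--(vi): the paper gets (iv) from the identity \eqnref{workhorse} with $y=x\ldiv J(x)$ and (vi) from an operator computation with $E_{I^2(x)}$, whereas you handle (iv), (v), (vi) uniformly by exhibiting a triple that visibly associates (both sides reducing to $1$ or to a known element via $x^2J(x)=x$, $J(x)\cdot x^2=J^2(x)$, and (ix)) and then rotating it with Lemma \lemref{cyclic-assoc}. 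This buys a more transparent and uniform treatment of the middle of the lemma, at the cost of first establishing the auxiliary identities $x^2J(x)=x$ and $J(x)\cdot x^2=J^2(x)$; the paper's detour through \eqnref{workhorse} is not wasted there, since that identity is reused in the proof of Theorem \thmref{wwip}. Your appeals to mirror (opposite-loop) arguments for the symmetric halves are legitimate, since the opposite of a Buchsteiner loop is again Buchsteiner, a fact the paper itself uses implicitly whenever it writes ``the other equality follows similarly.''
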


\begin{proof}
For (i): using Lemma \lemref{E}(i) and \lemref{E}(iv), we compute
\[
J(x)^2\cdot x = R_x R_{J(x)} J(x) = E_x\inv J(x)
= J\inv (x) = I(x)\,.
\]
The other equality follows similarly.

We have $L_x\inv R_{J(x)} L_x = R_x\inv R_{J(x)x} = R_x\inv$,
by \eqnref{buch-trans1}. Thus $L_x\inv R_{J(x)} = R_x\inv L_x\inv$.
Applying both sides to $1\in Q$, we obtain $I(x)\rdiv x =
x\ldiv J(x)$, which is (ii).

We obtain (iii) from (i) and (ii).

For (iv): In \eqnref{workhorse}, set $y = x\ldiv J(x)$ to get
$I(x)x = (x\ldiv J(x))\cdot I(J(x)) x = (x\ldiv J(x))\cdot x^2$.
Since (i) gives
$x\ldiv J(x) = I(x)^2$, we have $I(x)x = I(x)^2 x^2$.
The other equality follows from replacing $x$ with $J(x)$.

For (v): Using (iv) and (iii), $I(x) x = I(x)^2 x^2 = J(x)^2 x^2$.
Now (iii), with $x$ replaced by $J(x)$, is $x^2 = J^2(x)^2$,
and so $I(x) x = J(x)^2 J^2(x)^2$. Now the second equality of
(iv), with $x$ replaced by $J(x)$, is $J(x) J^2(x) = J(x)^2 J^2(x)^2$,
and so $I(x) x = J(x) J^2(x)$. The other equality is
obtained by replacing $x$ with $I(x)$.

For (vi): Using (v) and Lemma \lemref{E}(iii) and \lemref{E}(iv),
we compute
\[
I(x)\cdot x J(x) = I(x)\cdot I^2(x)I(x) =
L_{I(x)} L_{I^2(x)} I(x) = E_{I^2(x)} I(x)
= E_x I(x) = J(x)\,.
\]
The other equality is given by the mirror of this argument.

For (vii): Apply (\buchimp) to (vi).

For (viii): This follows immediately from (vii).

For (ix): Compute
$I^2(x) = E_x\inv (x) = R_x R_{J(x)} (x) = x J(x)\cdot x$,
using Lemma \lemref{E}(iv) and \lemref{E}(i).
\end{proof}

\begin{lem}
\lemlabel{eta}
For all $x\in Q$, $I(x) x = x J(x)$.
\end{lem}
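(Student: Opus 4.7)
The plan is to deduce $I(x)x = xJ(x)$ by establishing a companion identity $I(x)\cdot(I(x)x) = J(x)$ and pairing it with Lemma \lemref{techlemma}(vi), which already gives $I(x)\cdot(xJ(x)) = J(x)$; left-cancelling $L_{I(x)}$ then forces $I(x)x = xJ(x)$ at once.

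To obtain the companion identity I would specialise the Buchsteiner law (\buchid) by setting $y = I(x)$. Since $x\cdot I(x) = 1$, the left-hand side collapses and (\buchid) reduces to
\[
x \ldiv z = (I(x) \cdot zx) \rdiv x
\]
for every $z\in Q$, that is, $(x\ldiv z)\cdot x = I(x)\cdot zx$. Now I would take $z = I(x)$ and evaluate $x \ldiv I(x) = J(x)\rdiv x$ using Lemma \lemref{techlemma}(viii). The left-hand side becomes $(J(x)\rdiv x)\cdot x = J(x)$ while the right-hand side is exactly $I(x)\cdot(I(x)x)$, yielding $I(x)\cdot (I(x)x) = J(x)$, as desired. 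Comparing with Lemma \lemref{techlemma}(vi) and cancelling $L_{I(x)}$ closes the argument.

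I do not anticipate a genuine obstacle here: the key observation is simply that the substitution $y = I(x)$ trivialises the left-hand side of (\buchid), after which the pieces of Lemma \lemref{techlemma} already on hand deliver the rest. The dual route---substituting $y = J(x)$ to produce $xJ(x)\cdot J(x) = I(x)$ and then right-cancelling in the second half of Lemma \lemref{techlemma}(vi)---would finish equally cleanly, so the identity $I(x)x = xJ(x)$ is really a direct consequence of $x\cdot I(x) = J(x)\cdot x = 1$ together with the ``divisor-swap'' relations already recorded in (ii) and (viii) of Lemma \lemref{techlemma}.
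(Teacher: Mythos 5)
Your proof is correct and follows essentially the same strategy as the paper's: specialize the Buchsteiner law at arguments built from $x$ and $I(x)$, rewrite a division via Lemma \lemref{techlemma}(viii), compare against an identity already recorded in Lemma \lemref{techlemma}, and cancel a translation. The only (harmless) difference is that you land on $I(x)\cdot(I(x)x)=J(x)$ and compare with part (vi) before cancelling $L_{I(x)}$, whereas the paper derives $I(x)x\cdot x=I^2(x)$ and compares with part (ix) before cancelling $R_x$.
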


\begin{proof}
Using Lemma \lemref{techlemma}(viii) and (\buchid),
we get
\[
I(x)\ldiv I^2(x) = x \rdiv I(x) = (x\cdot xI(x))\rdiv I(x)
= I(x)\ldiv (I(x)x\cdot x)\,,
\]
and so $I^2(x) = I(x)x\cdot x$. But also
$I^2(x) = xJ(x)\cdot x$ by Lemma \lemref{techlemma}(ix).
Thus $I(x)x\cdot x = xJ(x)\cdot x$,
and the proof is complete after canceling.
\end{proof}

In view of the preceding lemma and many subsequent
calculations, it will be useful to set
\[
\tag{$\eta$}
\eta(x) = x J(x) = I(x) x
\]
for all $x\in Q$.

\begin{lem}
\lemlabel{altaut}
For all $x\in Q$, $L(x,x) \in \Aut(Q)$ and
$R(x,x)\in \Aut(Q)$.
\end{lem}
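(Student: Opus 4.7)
The plan is to invoke Corollary \corref{buch-pseudo}(ii) with $y = x$, which reduces the assertion $L(x,x) \in \Aut Q$ to the single nuclearity statement $(J(x)\rdiv x)\cdot x^2 \in N$. The claim for $R(x,x)$ is then immediate from Theorem \thmref{26}, since $R(x,x) = L(x,x)\inv$.

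The crux is to compute $(J(x)\rdiv x)\cdot x^2$. Using Lemma \lemref{techlemma}(vii), the equality $J(x)I(x)\cdot x = J(x)$ says $J(x)\rdiv x = J(x)I(x)$, and the companion equality $x\cdot J(x)I(x) = I(x)$ together with Lemma \lemref{eta} yields
\[
x(J(x)I(x))\cdot x \;=\; I(x)\cdot x \;=\; \eta(x) \;=\; x\cdot J(x).
\]
Reading this as $xy\cdot z = xu$ with $y = J(x)I(x)$, $z = x$, and $u = J(x)$, the implication (\buchimp) delivers $y\cdot zx = ux$, that is,
\[
(J(x)I(x))\cdot x^2 \;=\; J(x)\cdot x \;=\; 1.
\]
Hence $(J(x)\rdiv x)\cdot x^2 = 1 \in N$, and Corollary \corref{buch-pseudo}(ii) now gives $L(x,x) \in \Aut Q$.

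The only real obstacle is locating the correct substitution into (\buchimp); once Lemma \lemref{techlemma}(vii) has rewritten $J(x)\rdiv x$ as $J(x)I(x)$ and revealed that $x\cdot J(x)I(x) = I(x)$, the premise $xy\cdot z = xu$ essentially writes itself via Lemma \lemref{eta}. It is worth noting that the calculation yields equality to $1$ rather than mere membership in $N$, a strictly stronger conclusion than the lemma requires and one that is likely to be useful in later associator calculations.
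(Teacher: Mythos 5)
Your proof is correct and follows essentially the same route as the paper: both arguments reduce the claim via Corollary \corref{buch-pseudo}(ii) to showing $(J(x)\rdiv x)\cdot x^2 = 1$, using Lemma \lemref{techlemma}(vii) and Lemma \lemref{eta} to set up a premise for a Buchsteiner implication (you use (\buchimp) where the paper uses the equivalent (\buchimplong)), and then obtain $R(x,x)$ from Theorem \thmref{26}. The observation that the element equals $1$ rather than merely lying in $N$ is also implicit in the paper's proof, which concludes $u=1$.
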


\begin{proof}
Set $u = (J(x)\rdiv x)\cdot x^2$.
Since $J(x)\rdiv x = J(x) I(x)$ (Lemma \lemref{techlemma}(vii)),
we have $u\cdot J(x)x = J(x) I(x) \cdot x^2$.
Applying (\buchimplong), we get $xu\cdot J(x) = (x\cdot J(x)I(x))\cdot x
= I(x)x = \eta(x) = x J(x)$, using Lemmas \lemref{techlemma}(vii) and
\lemref{eta}. Canceling $J(x)$ and then $x$, we have $u = 1$.
By Corollary \corref{buch-pseudo}(ii), $L(x,x)\in \Aut(Q)$. The other
claim follows from $R(x,x) = L(x,x)\inv$ (Theorem \thmref{26}).
\end{proof}

\begin{lem}
\lemlabel{shift}
For all $x\in Q$,
\[
\begin{array}{rrclcrcl}
\mathrm{(i)} & L_{I^2(x)} &=& L_{\eta(x)} L_x & \text{and} &
R_{\eta(x)} R_x &=& R_{J^2(x)}\,, \\
\mathrm{(ii)} & L_x L_{\eta(x)} &=& L_{J^2(x)} & \text{and} &
R_x R_{\eta(x)} &=& R_{I^2(x)}\,, \\
\mathrm{(iii)} & L_x R_{\eta(x)} &=& R_{\eta(x)} L_x & \text{and} &
R_x L_{\eta(x)} &=& L_{\eta(x)} R_x\,.
\end{array}
\]
\end{lem}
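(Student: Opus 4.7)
The strategy is to reduce the six equations of the lemma to a single core associativity statement, which I would then establish using the $E_x$ automorphism together with Buchsteiner manipulations. By Lemma~\lemref{techlemma}(ix), $I^2(x) = \eta(x)\cdot x$ and $J^2(x) = x\cdot\eta(x)$, so the identities (i) and (ii) become restricted associativity statements for $\eta(x)$, while (iii) asserts commutation of translations.

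First I would organize the six identities into two triples of three pairwise equivalent statements. For the first triple (the left equation of (i), the right equation of (ii), and the left equation of (iii)), I apply (\buchimp) with $a=y$, $b=\eta(x)$, $c=x$, and $u=I^2(x)$ to show $\eta(x)(xy) = I^2(x)y$ iff $(y\eta(x))x = y\,I^2(x)$, which identifies the first with the second. Next I apply \eqnref{buch-trans1} with $z=\eta(x)$ to obtain $R_{\eta(x)}L_x = L_x R_x\inv R_{I^2(x)}$, so that $L_x R_{\eta(x)} = R_{\eta(x)}L_x$ is equivalent to $R_x R_{\eta(x)} = R_{I^2(x)}$. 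The mirror triple is treated symmetrically via \eqnref{buch-trans2}, with the analogous substitutions.

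It then suffices to prove one identity per triple. For the first triple I would establish the commutation $x(y\eta(x)) = (xy)\eta(x)$. Applying (\buchid) with $z=\eta(x)$ gives $(xy)\eta(x) = x\cdot\bigl((y\cdot I^2(x))\rdiv x\bigr)$, so the target reduces to the restricted associativity $(y\eta(x))\cdot x = y\cdot I^2(x)$ for all $y$.

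The main obstacle is precisely this associativity, and my plan to establish it is to exploit the automorphism $E_x$ from Lemma~\lemref{E}(ii). By Lemma~\lemref{techlemma}(v), $\eta(x) = J(x)\cdot J^2(x)$; combined with $E_x J^n(x) = J^{n+2}(x)$ from Lemma~\lemref{E}(iv), this yields $E_x(\eta(x)) = J^3(x)\cdot J^4(x) = \eta(x)$, where the second equality is Lemma~\lemref{techlemma}(v) applied at $J^2(x)$. Together with $E_x(x) = J^2(x)$ and $E_x(I^2(x)) = x$, applying the homomorphism $E_x$ to the element identity $I^2(x) = \eta(x)\cdot x$ produces the auxiliary relation $\eta(x)\cdot J^2(x) = x$. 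I expect that combining this relation with the extra automorphisms $L(x,x)$ and $R(x,x)$ from Lemma~\lemref{altaut} and further Buchsteiner manipulations (via (\buchid) and (\buchimplong)) will force the restricted associativity to hold for all $y$. Once the commutation is established the remaining identities of the first triple follow from the equivalences proved above, and the mirror triple is handled by the analogous argument.
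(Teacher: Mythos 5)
Your organization of the six identities into two cyclic classes is correct and matches what the paper does for parts (ii) and (iii): the paper rewrites the first equality of (i) as $\eta(x)\cdot xy = \eta(x)x\cdot y$ and invokes Lemma \lemref{cyclic-assoc} to cycle it into the first equalities of (iii) and (ii), with the mirror argument handling the other three. So the reduction to ``one identity per triple'' is sound, although note that your use of (\buchid) to pass from $x(y\eta(x)) = (xy)\eta(x)$ to $(y\eta(x))x = y\cdot I^2(x)$ merely moves you from one member of the triple to another and does not advance the proof.

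The genuine gap is that you never prove the representative identity of either triple, and that is the entire content of the lemma. Everything you actually establish --- $E_x(\eta(x)) = \eta(x)$, $E_x(I^2(x)) = x$, $\eta(x)\cdot J^2(x) = x$ --- is an identity among \emph{elements}, i.e.\ an evaluation of the desired translation identity at particular points. In a nonassociative loop, an element identity such as $\eta(x)\cdot x = I^2(x)$ does not imply the permutation identity $L_{\eta(x)}L_x = L_{I^2(x)}$; indeed the gap between the two is exactly the assertion that $\eta(x)$ associates with everything, which is what Lemma \lemref{eta-nuc} later extracts from this lemma. The concluding sentence ``I expect that combining this relation with $L(x,x)$, $R(x,x) \in \Aut Q$ and further Buchsteiner manipulations will force the restricted associativity to hold for all $y$'' is precisely the missing proof. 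The paper's argument for (i) is a substantial translation-group computation: it writes $L_{I^2(x)} = L_{I^2(x)}\,L(x^2,I(x))\,R(I(x),x^2)$ using Theorem \thmref{26}, evaluates the two factors separately via \eqnref{buch-trans1}, \eqnref{buch-trans2}, Lemma \lemref{techlemma}(i) and Lemma \lemref{E}(ii)--(v), and only after several conjugation and cancellation steps arrives at $L_{I^2(x)} = L_{\eta(x)}L_x$. Nothing in your proposal plays the role of that computation, and the tools you name ($L(x,x),R(x,x)\in\Aut Q$ from Lemma \lemref{altaut}) are weaker than the translation identities $L(x^2,I(x))R(I(x),x^2)=\id$ actually used; you would need to supply an argument of comparable substance for the proof to stand.
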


\begin{proof}
For (i): We will prove the first equality; the second
will follow by the mirror of the argument.
By Theorem \thmref{26},
$L_{I^2(x)} = L_{I^2(x)} L(x^2,I(x)) R(I(x),x^2)$,
and it will be useful to compute $L_{I^2(x)} L(x^2,I(x))$
and $R(I(x),x^2)$ separately.

Firstly, we use Lemma \lemref{techlemma}(i) to compute
\[
L_{I^2(x)} L(x^2,I(x))
= L_{I^2(x)} L_{x^2\cdot I(x)}\inv L_{x^2} L_{I(x)}
= L_{I^2(x)} L_{I^2(x)}\inv L_{x^2} L_{I(x)} \\
= L_{x^2} L_{I(x)}\,.
\]
By \eqnref{buch-trans2}, $L_{x^2} = L_x R_x\inv L_x R_x$,
while by Lemma \lemref{E}(iii),
$L_{I(x)} = L_x\inv E_{I(x)} = L_x\inv E_x$. Thus
\begin{equation}
\eqnlabel{shift-tmp1}
L_{I^2(x)} L(x^2,I(x)) = L_x R_x\inv L_x R_x L_x\inv E_x \,.
\end{equation}

Next, using Lemma \lemref{techlemma}(i),
$R(I(x),x^2) = R_{x^2\cdot I(x)}\inv R_{I(x)} R_{x^2}
= R_{I^2(x)}\inv R_{I(x)} R_{x^2}$. By \eqnref{buch-trans1},
$R_{x^2} = R_x L_x\inv R_x L_x$. Thus
$R(I(x),x^2) = R_{I^2(x)}\inv R_{I(x)} R_x L_x\inv R_x L_x
= R_{I^2(x)}\inv E_{I(x)}\inv L_x\inv R_x L_x$. We apply
Lemma \lemref{E}(iii) to conclude
\begin{equation}
\eqnlabel{shift-tmp2}
R(I(x),x^2) = R_{I^2(x)}\inv E_x\inv L_x\inv R_x L_x\,.
\end{equation}

Now we put \eqnref{shift-tmp1} and \eqnref{shift-tmp2}
together to get
\[
L_{I^2(x)} = L_x R_x\inv L_x R_x L_x\inv E_x R_{I^2(x)}\inv E_x\inv L_x\inv R_x L_x \,.
\]
Now $E_x R_{I^2(x)}\inv = R_{E_x I^2(x)}\inv E_x = R_x\inv E_x$, using Lemma \lemref{E}(ii) and \lemref{E}(iv). Thus
$L_{I^2(x)} = L_x R_x\inv (L_x R_x L_x\inv R_x\inv) L_x\inv R_x L_x$.
Applying Lemma \lemref{E}(v) to the parenthesized expression,
$L_{I^2(x)} = L_x R_x\inv E_x L_x\inv R_x L_x$.
Now $E_x L_x\inv = L_{J(x)}$, so
$L_{I^2(x)} = L_x R_x\inv L_{J(x)} R_x L_x$.
Finally, by \eqnref{buch-trans2}, $L_{\eta(x)}
= L_x R_x\inv L_{J(x)} R_x$, and so
$L_{I^2(x)} = L_{\eta(x)} L_x$, as claimed.

For (ii) and (iii): Using Lemma \lemref{techlemma}(ix), we rewrite
the first equality of (i) as $\eta(x)\cdot xy = \eta(x) x \cdot y$.
By Lemma \lemref{cyclic-assoc}, we also have
$x\cdot y \eta(x) = xy \cdot \eta(x)$ and
$x\cdot \eta(x) y = x \eta(x) \cdot y$. The first of these is the
first equality of (iii), while the second can be seen to be the
first equality of (ii) once we have observed that
$x \eta(x) = x \cdot I(x)x = J^2(x)$, by Lemmas \lemref{eta}
and \lemref{techlemma}(ix). The other equalities of (ii)
and (iii) similarly follow from the second equality of (i).
\end{proof}

Recall the inner mapping notation $T_x = R_x\inv L_x$.

\begin{lem}
\lemlabel{eta-aut}
For all $x\in Q$,
\begin{enumerate}
\item\quad $T_{\eta(x)} = R(x,x) E_x\inv$, and
\item\quad $T_{\eta(x)} \in \Aut(Q)$.
\end{enumerate}
\end{lem}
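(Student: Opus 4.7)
Part~(ii) follows immediately from~(i): by Lemma~\lemref{altaut}, $R(x,x) \in \Aut(Q)$; by Lemma~\lemref{E}(ii), $E_x \in \Aut(Q)$; so $T_{\eta(x)} = R(x,x)E_x\inv$ is an automorphism.

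For~(i), my plan is to reformulate the claim into a more tractable pointwise statement. Using $R(x,x) = R_{x^2}\inv R_x^2$ together with the identity $E_x\inv = R_x R_{J(x)}$ extracted from Lemma~\lemref{E}(i), the equality $T_{\eta(x)} = R(x,x)E_x\inv$ is equivalent to $R_{x^2}\, T_{\eta(x)} = R_x^3 R_{J(x)}$. Written out, this asks that the unique element $A = T_{\eta(x)}(y)$ characterized by $A\cdot xJ(x) = xJ(x)\cdot y$ also satisfies $A\cdot x^2 = (((yJ(x))x)x)x$. The plan is to derive the second equation from the first by applying the Buchsteiner implications (\buchimp) and (\buchimplong) in sequence, using Lemma~\lemref{techlemma} (in particular $xJ(x) = x^2 J(x)^2$ from part~(iv) and $I^2(x) = xJ(x)\cdot x$ from part~(ix)) together with Lemmas~\lemref{eta} and~\lemref{shift} to move between the factors $xJ(x)$ and $x^2$.

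The main obstacle is identifying the correct chain of Buchsteiner-style rewrites; a single misstep produces unwieldy expressions involving nested divisions. A potentially cleaner alternative route is to verify the identity directly in $\Mlt(Q)$: from Lemma~\lemref{shift}(ii) we have $L_{\eta(x)} = L_x\inv L_{J^2(x)}$ and $R_{\eta(x)} = R_x\inv R_{I^2(x)}$, and since $E_x$ is an automorphism (Lemma~\lemref{E}(ii)) satisfying $E_x(x) = J^2(x)$ and $E_x\inv(x) = I^2(x)$ (Lemma~\lemref{E}(iv)), conjugation yields $L_{\eta(x)} = L_x\inv E_x L_x E_x\inv$ and $R_{\eta(x)} = R_x\inv E_x\inv R_x E_x$. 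The claim then reduces to a word identity in $L_x$, $R_x$, $E_x$ that can be established using the commutator presentations $R(x,x) = [L_x,R_x]$ from Theorem~\thmref{26} and $E_x = [L_x\inv, R_x\inv]$ from Lemma~\lemref{E}(v), together with the commutativity relations $[L_x, R_{\eta(x)}] = [R_x, L_{\eta(x)}] = 1$ of Lemma~\lemref{shift}(iii).
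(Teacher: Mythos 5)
Your derivation of part (ii) from part (i) is exactly the paper's (Lemma \lemref{altaut} plus Lemma \lemref{E}(ii)), but for part (i) neither of your routes is a proof. The first (a pointwise chain of applications of (\buchimp) and (\buchimplong)) is explicitly left unexecuted, so there is nothing to check. The second route is the right kind of argument --- the paper's proof is also a computation in $\Mlt(Q)$ --- and your conjugation formulas $L_{\eta(x)} = L_x\inv E_x L_x E_x\inv$ and $R_{\eta(x)} = R_x\inv E_x\inv R_x E_x$ are correct. But the resulting word identity does \emph{not} follow from the relations you list. Carrying out the reduction: using $E_x\inv R_x E_x = R_{I^2(x)} = R_x R_{\eta(x)}$, $E_x L_x = L_{J^2(x)} E_x$, the expression $R(x,x) = R_x\inv L_x\inv E_x L_x R_x$ from Lemma \lemref{E}(vi), and the commutations of Lemma \lemref{shift}, the claim $T_{\eta(x)} = R(x,x)E_x\inv$ boils down to
\[
R_x\, R_{\eta(x)}\inv\, R_x\inv \;=\; L_{\eta(x)}\, R_{\eta(x)}\, L_{\eta(x)}\inv\,,
\]
which conjugates $R_{\eta(x)}\inv$ on one side and $R_{\eta(x)}$ on the other. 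This is not a consequence of $E_x = [L_x\inv,R_x\inv]$, $R(x,x)=[L_x,R_x]$ and Lemma \lemref{shift}(iii) alone.

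The missing ingredient is the identity $\eta(J^2(x)) = \eta(x)$, which follows from Lemma \lemref{techlemma}(v) via Lemma \lemref{eta}. Applying Lemma \lemref{shift}(ii) at the point $J^2(x)$ then gives a \emph{second} factorization $R_{\eta(x)} = R_{J^2(x)}\inv R_x$ (equivalently $E_x R_x\inv E_x\inv R_x$), in addition to your $R_{\eta(x)} = R_x\inv R_{I^2(x)}$; combining the two yields $R_x R_{\eta(x)}\inv R_x\inv = R_{\eta(x)}$, and applying Lemma \lemref{shift}(iii) at $J^2(x)$ as well shows $R_{\eta(x)}$ commutes with $L_{\eta(x)}$, which closes the residual identity above. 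This is precisely the pivot of the paper's proof: it writes $T_{\eta(x)} = R_{\eta(J^2(x))}\inv L_{\eta(J^2(x))} = R_x\inv R_{J^2(x)} L_{\eta(x)}$, commutes $R_{J^2(x)}$ past $L_{\eta(J^2(x))}$, arrives at $R_x\inv L_x\inv L_{J^2(x)} R_{J^2(x)}$, and then uses $L_{J^2(x)}R_{J^2(x)} = E_x L_x R_x E_x\inv$ and Lemma \lemref{E}(vi). Without the $\eta(J^2(x)) = \eta(x)$ step your argument cannot be completed as stated.
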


\begin{proof}
We compute
\begin{align*}
T_{\eta(x)} &= R_{\eta(x)}\inv L_{\eta(x)}
= R_{\eta(J^2(x))}\inv L_{\eta(J^2(x))}
= R_x\inv R_{J^2(x)} L_{\eta(J^2(x))} \\
&= R_x\inv L_{\eta(J^2(x))} R_{J^2(x)}
= R_x\inv L_{\eta(x)} R_{J^2(x)}
= R_x\inv L_x\inv L_{J^2(x)} R_{J^2(x)}\,,
\end{align*}
using Lemmas \lemref{eta}, \lemref{shift}(ii), \lemref{shift}(iii),
\lemref{eta} again, and \lemref{shift}(ii) again.
Now $E_x L_x R_x = L_{J^2(x)} R_{J^2(x)} E_x$ using
Lemma \lemref{E}(ii) and \lemref{E}(iv). Thus
\[
T_{\eta(x)} = R_x\inv L_x\inv E_x L_x R_x E_x\inv
= R(x,x) E_x\inv \,,
\]
using Lemma \lemref{E}(vi). This establishes (i).
Part (ii) follows from (i), from $E_x\in \Aut(Q)$
(Lemma \lemref{E}(ii)), and from $R(x,x)\in \Aut(Q)$
(Lemma \lemref{altaut}).
\end{proof}

\begin{lem}
\lemlabel{28}
For $a\in Q$,
\[
L_a R_a\inv \in \Aut Q
\quad \Leftrightarrow \quad
T_a \in \Aut Q
\quad \Leftrightarrow \quad
a \in N.
\]
\end{lem}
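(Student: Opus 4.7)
My plan is to prove the three conditions equivalent by a simple round-trip: (iii) implies both (i) and (ii) by direct computation, while (i) and (ii) each reduce to the single observation that $(R_a\inv, L_a, \id) \in \Atp Q$ forces $a \in N_\mu$. This observation is just the componentwise inverse in $\Atp Q$ of the autotopism $\alpha_\mu(a) = (R_a, L_a\inv, \id)$ of Lemma~\lemref{nuc-aut}; combined with Corollary~\corref{equal-nuc} it gives $a \in N$. For the direction $\mathrm{(iii)} \Rightarrow \mathrm{(i)}, \mathrm{(ii)}$: if $a \in N$, then free associativity with $a$ gives $L_a R_a = R_a L_a$, so $L_a R_a\inv = R_a\inv L_a = T_a$, and it suffices to show $T_a\in\Aut Q$. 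Using $T_a(u)\cdot a = au$ together with $a\in N_\rho, N_\mu, N_\lambda$ in turn, one computes
\[
(T_a(x)T_a(y))\cdot a = T_a(x)\cdot(T_a(y)a) = T_a(x)\cdot ay = (T_a(x)a)\cdot y = ax\cdot y = a\cdot xy = T_a(xy)\cdot a,
\]
so right cancellation by $a$ yields the homomorphism property.

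For $\mathrm{(i)} \Rightarrow \mathrm{(iii)}$, the key tool is the Buchsteiner autotopism $\Buch(a) = (L_a, R_a\inv, L_a R_a\inv)$ from Lemma~\lemref{13}. Composing $(L_a R_a\inv, L_a R_a\inv, L_a R_a\inv)$ with $\Buch(a)\inv = (L_a\inv, R_a, R_a L_a\inv)$ yields the autotopism $(L_a R_a\inv L_a\inv,\, L_a,\, \id)$. Evaluating its defining equation at $y = 1$ gives $(L_a R_a\inv L_a\inv)(x)\cdot a = x$, hence $L_a R_a\inv L_a\inv = R_a\inv$. Thus the autotopism is really $(R_a\inv, L_a, \id)$, and so $a \in N_\mu = N$.

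For $\mathrm{(ii)} \Rightarrow \mathrm{(iii)}$, the same trick applies: $(T_a, T_a, T_a)\cdot \Buch(a)\inv$ equals the autotopism $(R_a\inv,\, R_a\inv L_a R_a,\, R_a\inv L_a R_a L_a\inv)$. The calculation $R_a\inv L_a R_a(1) = R_a\inv(a^2) = a$ (right cancellation in $z\cdot a = a\cdot a$) together with $y = 1$ forces the third component to equal the identity, i.e.\ $L_a R_a = R_a L_a$; flexibility then collapses $R_a\inv L_a R_a$ back to $L_a$, so the autotopism becomes $(R_a\inv, L_a, \id)$, and once more $a \in N$. The only obstacle is careful bookkeeping with autotopism arithmetic; the conceptual content is that in a Buchsteiner loop the ``defect'' between the universal autotopism $\Buch(a)$ and a genuine automorphism triple $(\phi,\phi,\phi)$ is forced, upon evaluation at the neutral element, to be a nuclear autotopism.
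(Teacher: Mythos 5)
Your proposal is correct and rests on the same central device as the paper's proof: composing the diagonal triple with the Buchsteiner autotopism $\Buch(a)^{\pm 1}$, evaluating the resulting autotopism at $1$ to pin down its components, and recognizing $\alpha_{\mu}(a)^{\pm 1}$ via Lemma \lemref{nuc-aut} and Corollary \corref{equal-nuc}. The only organizational difference is that the paper first extracts flexibility of $a$ from either hypothesis by an elementary substitution (so that $L_a R_a\inv = T_a$ and a single composition with $\Buch(a)$ suffices), whereas you carry out the composition twice and establish the implication from $a \in N$ by a direct nuclear computation.
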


\begin{proof}
Indeed, if
$L_a R_a\inv \in \Aut Q$, then
$a((xy)\rdiv a) = a(x\rdiv a) \cdot a(y\rdiv a)$ for all $x,y \in Q$.
Setting $y = a$ yields $ax = a(x\rdiv a)\cdot a$, which is
equivalent to $a\cdot xa = ax\cdot a$.
Thus $L_a R_a = R_a L_a$.

Similarly, $L_a\inv R_a \in \Aut Q$ means
$a\ldiv (xy\cdot a) = (a \ldiv (xa))\cdot (a\ldiv (ya))$ for all $x,y \in Q$.
Again, setting $x = a$ gives $ay\cdot a = a \cdot ya$, that is,
$L_a R_a = R_a L_a$.
Thus $L_a R_a\inv \in \Aut Q \Leftrightarrow L_a\inv R_a \in \Aut Q$.

Finally, $L_a R_a\inv \in \Aut Q$ if and only if
\[
\Buch(a) ( L_a R_a\inv,\, L_a R_a\inv,\, L_a R_a\inv )\inv
= ( L_a R_a L_a\inv ,\, L_a\inv ,\, \id_Q ) \in \Atp Q \,.
\]
This last expression is an autotopism if and only if it
is equal to $( R_a ,\, L_a\inv ,\, \id_Q )$. But this
is equivalent to $a\in N_{\mu} = N$, by Corollary
\corref{equal-nuc}.
\end{proof}

\begin{lem}
\lemlabel{eta-nuc}
For all $x\in Q$, $\eta(x) \in N(Q)$.
\end{lem}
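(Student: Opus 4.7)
The plan is to observe that this lemma is essentially a two-line corollary of the immediately preceding results, and that all the substantive work has already been done. The key insight is that Lemma \lemref{eta-aut}(ii) established $T_{\eta(x)} \in \Aut(Q)$, while Lemma \lemref{28} characterizes nuclear elements precisely as those $a \in Q$ for which $T_a$ is an automorphism. Combining these two statements gives $\eta(x) \in N$ directly.

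More concretely, I would first invoke Lemma \lemref{eta-aut}(ii) to get that $T_{\eta(x)}$ is an automorphism of $Q$. Then I would appeal to Lemma \lemref{28}, which provides the equivalence $T_a \in \Aut Q \iff a \in N$, applied with $a = \eta(x)$, to conclude $\eta(x) \in N(Q)$. No further calculation is required, since the hard work—namely the identification $T_{\eta(x)} = R(x,x)E_x^{-1}$ together with the fact that both $R(x,x)$ (Lemma \lemref{altaut}) and $E_x$ (Lemma \lemref{E}(ii)) lie in $\Aut Q$—was carried out in Lemma \lemref{eta-aut}, and the nontrivial equivalence between $T_a \in \Aut Q$ and nuclearity of $a$ was settled in Lemma \lemref{28} using the Buchsteiner autotopism $\Buch(a)$ together with Corollary \corref{equal-nuc}.

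There is no genuine obstacle in this step; the difficulty was entirely absorbed into the preparatory lemmas. The only thing to be careful about is that the proof should cite Lemma \lemref{eta-aut}(ii) rather than \lemref{eta-aut}(i), since it is the automorphism conclusion, not the identity $T_{\eta(x)} = R(x,x)E_x^{-1}$ itself, that triggers the application of Lemma \lemref{28}.
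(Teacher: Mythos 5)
Your proposal is correct and is exactly the paper's own proof: cite Lemma \lemref{eta-aut}(ii) for $T_{\eta(x)}\in\Aut Q$ and then Lemma \lemref{28} to conclude $\eta(x)\in N(Q)$. Nothing more is needed.
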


\begin{proof}
By Lemma \lemref{eta-aut}(ii), $T_{\eta(x)}\in \Aut(Q)$.
By Lemma \lemref{28}, $\eta(x)\in N(Q)$.
\end{proof}

We are now ready for the main result of this section.

\begin{thm}
\thmlabel{wwip}
Every Buchsteiner loop has the doubly weak inverse property.
\end{thm}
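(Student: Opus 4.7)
The plan is to verify the first WWIP identity $I(xy)\cdot I^2(x) = I(y)$ directly; the companion identity $J^2(x)\,J(yx) = J(y)$ will then follow by the mirror argument. The essential ingredient, already in hand, is Lemma \lemref{eta-nuc} placing $\eta(x) = xJ(x) = I(x)x$ in the nucleus $N$; combined with a single application of the implication (\buchimp), this should carry the proof.

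First I would rewrite $I^2(x) = xJ(x)\cdot x = \eta(x)\cdot x$ using Lemmas \lemref{techlemma}(ix) and \lemref{eta}. Since $\eta(x)\in N\subseteq N_\mu$, I can reassociate
\[
I(xy)\cdot I^2(x) \;=\; I(xy)\cdot \bigl(\eta(x)\,x\bigr) \;=\; \bigl(I(xy)\cdot \eta(x)\bigr)\cdot x,
\]
so it suffices to prove $y\cdot \bigl((I(xy)\eta(x))\cdot x\bigr) = 1$, which by uniqueness of left division identifies $(I(xy)\eta(x))\cdot x$ as $I(y)$. To that end, I would compute $xy\cdot (I(xy)\eta(x))$ by using $\eta(x)\in N_\rho$ to reassociate it as $(xy\cdot I(xy))\cdot \eta(x) = \eta(x) = xJ(x)$. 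Now applying (\buchimp) with $z = I(xy)\eta(x)$ and $u = J(x)$, the equality $xy\cdot z = x\cdot u$ just established transfers to $y\cdot zx = u\cdot x = J(x)\cdot x = 1$, which is exactly what is needed. The dual identity proceeds symmetrically, starting from $J(yx)\cdot yx = 1$, using $\eta(x)\in N_\lambda$ to pull $\eta(x)$ through the product, and invoking (\buchimp) in the opposite direction so as to land on $x\cdot I(x) = 1$.

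I do not anticipate a genuine obstacle at this stage: essentially all of the work has already been done in the long development of this section. The real difficulty lay in the cascade through Lemmas \lemref{shift}, \lemref{eta-aut}, and \lemref{28}, culminating in Lemma \lemref{eta-nuc}'s assertion that $\eta(x)\in N$. With that fact and the elementary inverse identities of Lemma \lemref{techlemma} in hand, WWIP reduces to a short associativity calculation powered by one application of (\buchimp).
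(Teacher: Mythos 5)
Your proof is correct, and it rests on exactly the same pivot as the paper's: Lemma \lemref{eta-nuc}, i.e.\ $\eta(x)\in N$. The finishing computation, however, is organized differently. The paper starts from the identity \eqnref{workhorse} (namely $\eta(x)=y\cdot I(xy)x$), rewrites $R_x$ as $R_{\eta(I^2(x))}R_{I^2(x)}$ via Lemma \lemref{shift}, and then cancels the nuclear element $\eta(x)$ on the right. You instead decompose $I^2(x)=\eta(x)\cdot x$ (Lemma \lemref{techlemma}(ix) plus Lemma \lemref{eta}), reassociate using $\eta(x)\in N_\mu$ and $N_\rho$, and close with a single fresh application of (\buchimp) to the trivial relation $xy\cdot I(xy)\cdot\eta(x)=x\cdot J(x)$. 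The net effect is that you bypass both the workhorse identity and Lemma \lemref{shift} in the endgame, which makes your version marginally more self-contained; the paper's version is a one-line cancellation once the translation identity $R_x=R_{\eta(I^2(x))}R_{I^2(x)}$ is in hand, and that identity is needed elsewhere anyway. Your remark that the companion identity $J^2(x)J(yx)=J(y)$ follows automatically is also fine --- indeed the two conditions in \eqnref{e16} are shown to be equivalent in \S\secref{IP}, so the mirror argument is not even required.
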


\begin{proof}
By \eqnref{workhorse} and Lemmas \lemref{eta-nuc}, \lemref{shift}(ii)
(with $I^2(x)$ in place of $x$), and \lemref{eta},
\[
\eta(x) = y\cdot R_x I(xy) = y\cdot R_{\eta(I^2(x))} R_{I^2(x)} I(xy)
= y\cdot (I(xy) I^2(x)\cdot \eta(x))\,.
\]
By Lemma \lemref{eta-nuc}, $\eta(x) = (y\cdot I(xy) I^2(x))\cdot \eta(x)$.
Canceling, we get (\WWIP).
\end{proof}

\section{Calculations in isotopes}
\seclabel{isotopes}

Many important results in loop theory are obtained by considering a given equality
within the principal isotopes of a loop $Q$. In this section we shall follow this
pattern. We first obtain that, in fact, a Buchsteiner loop is isomorphic to all of
its isotopes. At the end we shall be able to verify that the factor
of every Buchsteiner loop by its nucleus is an abelian group.

Recall that for a Buchsteiner loop $Q$, we denote by $Q[b]$ the isotope at
$b\in Q$. The operation in $Q[b]$ is defined as
$b\ldiv (bx \cdot y) = (x\cdot yb)\rdiv b$, cf.~\S\secref{basics}.
Also recall that a \emph{G-loop} is a loop which is
isomorphic to all of its loop isotopes. In fact,
for a loop to be a G-loop, it is sufficient for
it to be isomorphic to all of its left and right
loop isotopes \cite{bel}.

\begin{thm}
\thmlabel{wwip-isom}
Let $Q$ be a Buchsteiner loop. Then for each $x \in Q$, there
exists an isomorphism from $Q$ to $Q[x]$. In particular, every
Buchsteiner loop is a G-loop.
\end{thm}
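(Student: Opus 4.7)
The plan is to combine Theorem~\thmref{wwip} with Lemma~\lemref{m-inv-atp} at $m=1$, spinning off a new autotopism from the Buchsteiner autotopism $\Buch(x) = (L_x, R_x\inv, L_x R_x\inv)$ of Lemma~\lemref{13}, and then adjusting the result by a nuclear autotopism so that Lemma~\lemref{16} applies directly. The second transform of Lemma~\lemref{m-inv-atp}, legitimized by WWIP, sends $\Buch(x)$ to
\[
A(x) = (I L_x R_x\inv J,\; I^2 L_x J^2,\; I R_x\inv J) \in \Atp Q.
\]

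Evaluating each slot of $A(x)$ at $1$ is immediate and shows that the three components send $1$ to $I\eta(x)$, $I^2(x)$, and $x$ respectively. Since $\eta(x) \in N$ by Lemma~\lemref{eta-nuc} and $N$ is a group, the element $a := I\eta(x) = \eta(x)\inv$ also lies in $N$. Lemma~\lemref{nuc-aut} then supplies the nuclear autotopism $(L_a\inv, \id_Q, L_a\inv) \in \Atp Q$, and left-multiplying $A(x)$ by this triple yields a new autotopism whose first component now fixes $1$ while the second still sends $1$ to $I^2(x)$. Lemma~\lemref{16} identifies this corrected first component as an isomorphism from $Q$ to the right isotope at $I^2(x)$, which by Lemma~\lemref{12} coincides with $Q[I^2(x)]$. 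Because $I^2$ is a permutation of $Q$, choosing $x := J^2(e)$ produces an isomorphism $Q \to Q[e]$ for any prescribed $e \in Q$; the G-loop assertion then follows from the remark preceding the theorem, since every left or right principal isotope has the form $Q[e']$ in a Buchsteiner loop.

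The main obstacle is producing, for each target $e$, an autotopism whose companion (second-component value at $1$) equals $e$. The pseudoautomorphism family $L(x,y)$ from Corollary~\corref{buch-pseudo} only delivers companions of the form $(J(y)\rdiv x)\cdot xy$, and it is not obvious these exhaust $Q$. This is exactly where the doubly weak inverse property earns its keep: through Lemma~\lemref{m-inv-atp} it supplies a new family of autotopisms whose first components fail to fix $1$ only by a nuclear factor, so that after nuclear correction the companions become $I^2(x)$ and hence range over all of $Q$.
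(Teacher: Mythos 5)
Your proposal is correct and follows essentially the same route as the paper: both derive the autotopism $(I L_x R_x\inv J,\, I^2 L_x J^2,\, I R_x\inv J)$ from $\Buch(x)$ via Theorem \thmref{wwip} and Lemma \lemref{m-inv-atp}, correct its first component by a nuclear factor built from $\eta(x)\in N$ (Lemma \lemref{eta-nuc}), and then apply Lemma \lemref{16}. The only difference is cosmetic: the paper multiplies by $\Buch(I(\eta(x)))\inv$, which also shifts the companion to $I^4(x)$, whereas your correction $(L_{I\eta(x)}\inv,\id_Q,L_{I\eta(x)}\inv)$ leaves it at $I^2(x)$ --- both companions range over all of $Q$, so either choice finishes the argument.
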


\begin{proof}
By Theorem \thmref{wwip}, $Q$ has WWIP, and so starting
with the autotopism $\Buch(u)$,
we obtain the autotopism
$\hat{\Buch}(u) = (I L_u R_u\inv J, I^2 L_u J^2, I R_u\inv J)$,
using Lemma \lemref{m-inv-atp}. Now consider the autotopism
$(\alpha_u,\beta_u,\gamma_u) =
\Buch(I(\eta(u)))\inv \hat{\Buch}(u)$.
We have
\[
\alpha_u(1) = L_{I(\eta(u))}\inv I L_u R_u\inv J(1)
= I(\eta(u)) \ldiv I(u J(u)) = 1\,,
\]
using direct computation. Also, since $\eta(u)\in N(Q)$
(Lemma \lemref{eta-nuc}), we use Lemmas \lemref{eta}
and \lemref{techlemma}(ix) (with $x = I(u)$) to compute
\begin{align*}
\beta_u(1) &= R_{I(\eta(u))} I^2 L_u J^2(1) = I^2(u) I(\eta(u)) \\
&= I(\eta(u)\cdot I(u)) = I(\eta(I(u))\cdot I(u)) = I^4(u)\,,
\end{align*}
where in the third equality, we are using the identity
$I(x)a\inv = I(ax)$ for any $a\in N(Q)$.
Now applying Lemma \lemref{16}, we have
$(\alpha_u,\beta_u,\gamma_u) =
(\alpha_u, R_{I^4(u)} \alpha_u, R_{I^4(u)} \alpha_u)$.
Therefore $\alpha_{J^4(x)}$ is the desired isomorphism from
$Q$ to $Q[x]$.
\end{proof}

\begin{prop}
\prplabel{big-ident}
Let $Q$ be a loop. The following are equivalent.
\begin{enumerate}
\item $Q$ is a Buchsteiner loop,
\item For all $x,y,u,v\in Q$,
\[
\tag{\buchbigid}
(xy)\ldiv ((xy\cdot u)v) = (u(v\cdot yx)])\rdiv (yx)\,,
\]
\item For all $x,y,z,u,v\in Q$,
\[
\tag{\buchbigimp}
(xy\cdot u)v = xy\cdot z
\qquad\Rightarrow\qquad
u(v\cdot yx) = z\cdot yx\,.
\]
\item For all $x,y\in Q$,
\[
\BBuch(x,y) = (L_{xy}, R_{yx}\inv, L_{xy} R_{yx}\inv)
\]
is an autotopism.
\end{enumerate}
\end{prop}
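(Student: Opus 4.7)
The plan is to establish the cycle (iv)$\Leftrightarrow$(ii)$\Leftrightarrow$(iii)$\Rightarrow$(i)$\Rightarrow$(iv), grouping the formal equivalences together and concentrating the real work in the implication (i)$\Rightarrow$(iv), which I would prove by transporting the Buchsteiner law through the isotope isomorphism of Theorem~\thmref{wwip-isom}.

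First I would dispatch the routine equivalences. For (ii)$\Leftrightarrow$(iv), expand the autotopism equation $L_{xy}(p)\cdot R_{yx}\inv(q) = L_{xy}R_{yx}\inv(pq)$, substitute $q = r\cdot yx$, and rearrange to recover (ii) after relabeling $p\to u$, $r\to v$. The equivalence (ii)$\Leftrightarrow$(iii) is a tautology in a loop: each side of the equality in (ii) is the unique solution of the corresponding cancellation equation, so (ii) is nothing more than (iii) written as an equation. Finally (ii)$\Rightarrow$(i) is specialization: setting $y=1$ in (ii) and renaming reproduces the Buchsteiner identity (\buchid) verbatim.

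The heart of the argument is (i)$\Rightarrow$(iv), which I would route through the isomorphism $Q\cong Q[e]$. By Theorem~\thmref{wwip-isom} every isotope $Q[e]$ is isomorphic to $Q$; since the class of Buchsteiner loops is a variety and hence closed under isomorphism, $Q[e]$ itself is Buchsteiner, so the identity (\buchid) holds for the operation $\cdot^e$ of $Q[e]$. My plan is then to unpack what this says back in $(Q,\cdot)$. Using the left-isotope form $u\cdot^e v = e\ldiv(eu\cdot v)$ guaranteed by Lemma~\lemref{12}, together with the induced left division $a\ldiv^e y = (ea)\ldiv(ey)$, the left-hand side of (\buchid) applied in $Q[e]$ collapses to $(ea)\ldiv((ea\cdot b)c)$. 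Using the right-isotope form $u\cdot^e v = (u\cdot ve)\rdiv e$ (available for the same reason) together with the induced right division $y\rdiv^e a = (ye)\rdiv(ae)$, the right-hand side becomes $(b\cdot c(ae))\rdiv(ae)$. Substituting $e=x$, $a=y$, $b=u$, $c=v$ in the resulting equality $(ea)\ldiv((ea\cdot b)c) = (b\cdot c(ae))\rdiv(ae)$ yields precisely (\buchbigid).

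The main conceptual obstacle is recognizing that (\buchbigid) is nothing other than the Buchsteiner identity read inside an isotope and then translated back; once this observation is in hand, the remaining work is bookkeeping with left and right divisions, and Theorem~\thmref{wwip-isom} does the structural heavy lifting. The only delicate point is the simultaneous availability of the two isotope formulas, which is precisely the content of Lemma~\lemref{12}, and which is itself equivalent to $Q$ being a Buchsteiner loop.
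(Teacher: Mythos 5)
Your proposal is correct and follows essentially the same route as the paper: the formal equivalences of (ii), (iii), (iv) and the specialization to (i) are handled the same way, and the substantive implication is obtained exactly as in the paper by invoking Theorem \thmref{wwip-isom} to conclude that $Q[x]$ is again Buchsteiner and then translating the Buchsteiner law of the isotope back to $Q$ via the isotope translations $L_x\inv L_{xy}$ and $R_x\inv R_{yx}$. (The only cosmetic difference is that you label the hard step as (i)$\Rightarrow$(iv) while actually deriving (\buchbigid), i.e.\ (ii), which is harmless given the established equivalence.)
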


\begin{proof}
The equivalence of (ii) and (iii) is clear, as is
the fact that (\buchbigid) implies (\buchid), and
so (ii) implies (i). Also, (iv) is just a rewrite
of (ii). To finish the proof, we shall show that
(i) implies (ii).

Suppose $Q$ is a Buchsteiner loop, and fix
$x\in Q$. The isotope $Q[x]$ is also a Buchsteiner
loop by Theorem \thmref{wwip-isom}. Denote its operation
by $\circ$, and its left
and right translations as $\tilde{L}_y$ and $\tilde{R}_y$,
$y\in Q$, respectively. Then
$\tilde{L}_y (z) = L_x\inv L_{xy}(z)$
and $\tilde{R}_y (z) = R_x\inv R_{yx} (z)$.
Thus
\[
\tilde{L}_y\inv ((y\circ u)\circ v) =
L_{xy}\inv L_x L_x\inv L_{x(y\circ u)}(v) =
(xy)\ldiv ((xy\cdot u)v])\,,
\]
and
\[
\tilde{R}_y\inv (u\circ (v\circ y)) =
R_{yx}\inv R_x R_x\inv R_{(v\circ y)x} (u) =
(u(v\cdot yx)) \rdiv (yx)\,.
\]
Therefore (\buchbigid) holds.
\end{proof}

\begin{thm}
\thmlabel{abelian-factor}
Let $Q$ be a Buchsteiner loop. Then $Q/N$ is an abelian group.
\end{thm}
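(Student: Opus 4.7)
The plan is to combine two autotopisms of $Q$ to force $xy$ and $yx$ to differ by a nuclear factor, and then to observe that a commutative Buchsteiner loop must already be associative.

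First I form the composite
\[
\BBuch(x,y)\,\Buch(xy)\inv = (\id_Q,\; R_{yx}\inv R_{xy},\; L_{xy} R_{yx}\inv R_{xy} L_{xy}\inv),
\]
which is an autotopism by Proposition \prpref{big-ident}(iv) and Lemma \lemref{13}. Since its first component is $\id_Q$, evaluating at $(1,y)$ forces its second and third components to agree, so Proposition \prpref{atp-cent}(ii) applies and yields $R_{yx}\inv R_{xy} = R_a$, where $a = R_{yx}\inv R_{xy}(1) = xy\rdiv yx \in N_\rho$. By Corollary \corref{equal-nuc} we have $N_\rho = N$, so $xy\rdiv yx \in N$ for all $x,y \in Q$; since $N$ is normal in $Q$ (Corollary \corref{15}), this says exactly that $Q/N$ is commutative.

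It remains to upgrade commutativity to associativity in $Q/N$. Since $Q/N$ is a Buchsteiner loop (the identity passes to quotients) and is commutative, left and right divisions satisfy $a\ldiv b = b\rdiv a$, so the Buchsteiner law reads
\[
x\ldiv (xy\cdot z) \;=\; (y\cdot zx)\rdiv x \;=\; x\ldiv (y\cdot zx),
\]
which simplifies on applying $L_x$ to $xy\cdot z = y\cdot zx$. Using $zx = xz$ and then swapping the roles of $x$ and $y$ we obtain $yx\cdot z = x\cdot yz$; a final appeal to $yx = xy$ delivers the associative law. Hence $Q/N$ is an abelian group.

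The substance of the argument, and its main obstacle, lies entirely in the first step: extracting $xy\rdiv yx \in N$. This in turn rests on the Big Buchsteiner autotopism of Proposition \prpref{big-ident} (whose proof required Theorem \thmref{wwip-isom}, and hence WWIP), on the coincidence of all three nuclei in Corollary \corref{equal-nuc}, and on the centralizer analysis of Proposition \prpref{atp-cent}. Once commutativity of $Q/N$ is in hand, the promotion to an abelian group is a short formal calculation that uses nothing beyond the Buchsteiner identity itself.
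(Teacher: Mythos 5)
Your proof is correct and follows the paper's argument essentially verbatim: the same composite autotopism $\BBuch(x,y)\,\Buch(xy)\inv$ together with Proposition \prpref{atp-cent} yields $(xy)\rdiv(yx)\in N_\rho=N$, hence commutativity of $Q/N$. Your only addition is the explicit check that a commutative Buchsteiner loop is associative; the paper leaves that step implicit (it also follows at once from Lemma \lemref{center} applied to the Buchsteiner loop $Q/N$), so this is a harmless elaboration rather than a different route.
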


\begin{proof}
We compute the autotopism
\[
\BBuch(x,y) \Buch(xy)\inv =
(I, R_{yx}\inv R_{xy}, L_{xy} R_{yx}\inv R_{xy} L_{xy}\inv )\,.
\]
By Proposition
\prpref{atp-cent}, $R_{yx}\inv R_{xy} = R_a$ for some
$a\in N_{\rho} = N$. Evaluating at $1\in Q$, we have
$a = (xy)\rdiv (yx) \in N$, that is, $xy \equiv yx$ mod $N$
for all $x,y\in Q$. Thus $Q/N$ is an abelian group.
\end{proof}

\begin{cor}
\corlabel{Alr}
A Buchsteiner loop $Q$ is an $A_{l,r}$-loop.
\end{cor}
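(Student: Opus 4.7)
The proof is a direct combination of remarks already in the paper with the new information just obtained in Theorem \thmref{abelian-factor}. The plan is to observe that the passage following Corollary \corref{buch-pseudo} reduces the $A_{\ell,r}$ property in a Buchsteiner loop to a single identity modulo the nucleus, and then to verify that identity using the structure of $Q/N$.

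More precisely, Theorem \thmref{26} gives $R(x,y) = L(y,x)\inv$, so $\LMlt_1 = \RMlt_1$ and it suffices to show $L(x,y) \in \Aut Q$ for all $x,y \in Q$. By Corollary \corref{buch-pseudo}(ii), this is equivalent to $(J(y)\rdiv x)\cdot xy \in N$ for all $x,y \in Q$. Thus the goal reduces to establishing this membership, or equivalently, to showing that $(J(y)\rdiv x)\cdot xy \equiv 1$ in $Q/N$.

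I would carry out this last step as follows. By Theorem \thmref{abelian-factor}, $Q/N$ is an abelian group. In an abelian group every element has a two-sided inverse, so the image of $J(y)$ in $Q/N$ is simply the group-theoretic inverse of the image of $y$, and the right division $a \rdiv b$ becomes $ab\inv$. Hence, writing $\bar{x}$ for the class of $x$ in $Q/N$,
\[
\overline{(J(y)\rdiv x)\cdot xy} \;=\; \bar{y}\inv \bar{x}\inv \cdot \bar{x}\bar{y} \;=\; 1,
\]
using commutativity and associativity in $Q/N$. This gives $(J(y)\rdiv x)\cdot xy \in N$ as required, and the corollary follows.

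There is essentially no obstacle here: all the substantive work was done in Theorems \thmref{wwip}, \thmref{wwip-isom}, and \thmref{abelian-factor}, together with the autotopism computation in Corollary \corref{buch-pseudo}. The only point deserving a line of care is justifying that $J(y) \equiv y\inv$ modulo $N$; this is immediate because $Q/N$ is an abelian group, where left and right inverses automatically coincide with the unique group inverse.
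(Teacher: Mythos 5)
Your proposal is correct and follows essentially the same route as the paper: both reduce the claim via Corollary \corref{buch-pseudo} and Theorem \thmref{26} to checking $(J(y)\rdiv x)\cdot xy \equiv 1$ mod $N$, which follows from Theorem \thmref{abelian-factor}. You merely spell out the quotient computation that the paper leaves implicit.
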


\begin{proof}
Since $Q/N$ is an abelian group, we certainly have
$(J(y)\rdiv x)\cdot xy \equiv 1$ mod $N$. But then
$Q$ is an $A_l$-loop by Corollary \corref{buch-pseudo}.
The rest follows from Theorem \thmref{26}.
\end{proof}

We conclude this section with a description of a normal
subloop of a Buchsteiner loop which characterizes the
centers of the left and right multiplication groups.

For a loop $Q$, set
\[
\tag{\dM}
M(Q) = \setof{a\in Q}{L_a \in \RMlt}
= \setof{a\in Q}{T_a \in \RMlt_1}\,.
\]
To see that these define the same set, note that for
$a\in Q$, $L_a = R_a \psi$ for some $\psi\in \RMlt_1$
if and only if $T_a \in \RMlt_1$. In addition, let
\[
\varGamma(Q) = \setof{a\in Q}{a = \phi(1)\text{ for some }\phi\in
\LMlt\cap \RMlt}\,,
\]
that is, $\varGamma(Q)$ is the orbit of $\LMlt\cap \RMlt$
containing the neutral element $1\in Q$. Note that
$M(Q)\subseteq \varGamma(Q)$.

\begin{prop}
\prplabel{M}
Let $Q$ be a Buchsteiner loop. Then
\begin{enumerate}
\item $M(Q) = \varGamma(Q) = \setof{a\in Q}{R_a \in \LMlt}
= \setof{a\in Q}{T_a \in \LMlt_1}$,
\item $M(Q)$ is a normal subloop of $Q$,
\item $M \leq Z(N)$,
$Z(\LMlt) = R_{(M)}$, and $Z(\RMlt)= L_{(M)}$
\end{enumerate}
\end{prop}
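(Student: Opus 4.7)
My plan leans on two earlier results: Theorem~\thmref{26}, which yields $\LMlt_1 = \RMlt_1$, and Corollary~\corref{Alr}, which asserts that every Buchsteiner loop is an $A_{\ell,r}$-loop, i.e.~$\LMlt_1 \le \Aut Q$. Combined with Lemma~\lemref{28} and the characterization of the nuclei as fixed-point sets of the one-sided inner mapping groups, these results do the heavy lifting. For part~(i), I would begin with the definitional equivalence $L_a \in \RMlt \Leftrightarrow T_a \in \RMlt_1$, immediate from $T_a = R_a\inv L_a$; use Theorem~\thmref{26} to collapse $\RMlt_1$ and $\LMlt_1$; and obtain the symmetric form $R_a \in \LMlt$ via $R_a = L_a T_a\inv$. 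The identification $M(Q) = \varGamma(Q)$ then follows because any $\phi \in \LMlt$ with $\phi(1) = a$ differs from $L_a$ by an element of $\LMlt_1 = \RMlt_1 \subseteq \RMlt$, so $\phi \in \RMlt$ if and only if $L_a \in \RMlt$.

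For part~(ii), I would observe that $\LMlt \cap \RMlt$ is a normal subgroup of $\Mlt Q$ by Corollary~\corref{normal}; its orbit through $1$ is precisely $\varGamma(Q) = M(Q)$, hence a block of the $\Mlt Q$-action on $Q$ containing the neutral element. The characterization recalled in~\secref{basics} then makes such a block automatically a normal subloop.

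For part~(iii), the main obstacle---and the conceptual heart of the proposition---is the containment $M \le N$. If $a \in M$, then $T_a \in \LMlt_1$ by~(i), Corollary~\corref{Alr} promotes this to $T_a \in \Aut Q$, and Lemma~\lemref{28} then forces $a \in N$. Commutativity within $N$ comes for free from the same ingredients: $T_a \in \LMlt_1$ fixes $N_\rho = N$ pointwise (by the fixed-point characterization of $N_\rho$), so $T_a(b) = b$ reads as $ab = ba$ for every $b \in N$, giving $M \le Z(N)$. Finally, for $Z(\LMlt) = R_{(M)}$, I would combine the fact noted just before Lemma~\lemref{centralizers}---an element of $\Mlt Q$ centralizing $L_{(Q)}$ must be $R_b$ with $b \in N_\rho = N$---with the membership condition $R_b \in \LMlt \Leftrightarrow b \in M$ from~(i); since $M \le N$, the two constraints align and pin down $Z(\LMlt)$ as $R_{(M)}$. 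The equality $Z(\RMlt) = L_{(M)}$ is symmetric.
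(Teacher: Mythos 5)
Your proposal is correct and follows essentially the same route as the paper: part (i) from $\LMlt_1=\RMlt_1$ (Theorem \thmref{26}), part (ii) from the normality of $\LMlt\cap\RMlt$ in $\Mlt Q$ and the block-system characterization of normal subloops, and part (iii) from Corollary \corref{Alr} together with Lemma \lemref{28} for $M\le N$, the fixed-point characterization of $N_\rho$ under $\LMlt_1$ for $M\le Z(N)$, and Lemma \lemref{centralizers} for the centers of $\LMlt$ and $\RMlt$. You merely spell out some steps the paper leaves implicit (e.g.\ the $\varGamma(Q)=M(Q)$ identification and the intersection $\LMlt\cap R_{(N_\rho)}=R_{(M)}$), all of which check out.
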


\begin{proof}
Part (i) follows from $\LMlt_1 = \RMlt_1$ (Theorem \thmref{26}).
Since $\LMlt \cap \RMlt$ is a normal subgroup of $\Mlt Q$
(Corollary \corref{normal}), its orbits form a block system
of $\Mlt(Q)$, and so (ii) holds.

For $a\in M$, $L_a\inv R_a \in \LMlt_1 \le \Aut Q$,
since $Q$ is an $A_l$-loop (Corollary \corref{Alr}). Thus
$a \in N$ by Lemma \lemref{28}, and so $M \subseteq N$.
Now for $a\in M$, $c\in N$, $ca = R_a c = L_a \varphi(c) = ac$
for some $\varphi\in \LMlt_1$, and so $M\leq Z(N)$.
The remaining assertions of (iii) follow from Lemma
\lemref{centralizers}.
\end{proof}

\section{Weak inverse property}
\seclabel{WIP}

In this section, we shall describe those Buchsteiner loops with
the weak inverse property, and make further remarks about
Buchsteiner CC loops. A parallel development can be found
in \cite{dni}.

\begin{thm}
\thmlabel{wip-buch}
Let $Q$ be a WIP Buchsteiner loop. Then $Q$ is a CC loop.
\end{thm}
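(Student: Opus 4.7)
The plan is to derive the LCC autotopism $\mathbf{L}(x) = (R_x\inv L_x, L_x, L_x)$ for every $x \in Q$ by transforming the Buchsteiner autotopism $\Buch(x) = (L_x, R_x\inv, L_x R_x\inv)$ of Lemma \lemref{13} via the WIP machinery of Section \secref{IP}. Once LCC is in hand, Lemma \lemref{23} immediately yields RCC as well, so $Q$ is a CC loop.

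First I would record that WIP is precisely the $m$-inverse property for $m = -1$, as noted just before Proposition \prpref{33}, so Lemma \lemref{m-inv-atp} applies with $m = -1$. Two standard consequences of WIP will carry out the bookkeeping: setting $y = 1$ in $xI(yx) = I(y)$ yields $I = J$ and $I^2 = \id_Q$, while the two WIP identities themselves read as operator equalities $L_x I R_x = I$ and $R_x I L_x = I$, equivalently $I L_x I = R_x\inv$ and $I R_x I = L_x\inv$.

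Next I apply the second transformation of Lemma \lemref{m-inv-atp} with $m = -1$, which (using $I = J$) sends an autotopism $(\alpha,\beta,\gamma)$ to $(I\gamma I, \alpha, I\beta I)$. On $\Buch(x)$ the middle component is just $L_x$ and the third is $I R_x\inv I = L_x$, while the first is $I L_x R_x\inv I$. Inserting $I^2 = \id_Q$ between $L_x$ and $R_x\inv$ splits this as $(I L_x I)(I R_x\inv I) = R_x\inv L_x$. The resulting triple is $(R_x\inv L_x, L_x, L_x) = \mathbf{L}(x)$, so $Q$ is LCC, and Lemma \lemref{23} completes the proof.

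The only real work is the middle-component simplification, which is routine once the conjugation formulas $IL_xI = R_x\inv$ and $IR_xI = L_x\inv$ are on the table. I expect no deeper obstacle, because the $m$-inverse autotopism machinery converts $\Buch(x)$ exactly into $\mathbf{L}(x)$, with no nucleus-valued residue to absorb; passing to $Q/N$ or to any further structural result is unnecessary.
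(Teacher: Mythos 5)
Your overall strategy --- transforming $\Buch(x)$ by Lemma \lemref{m-inv-atp} with $m=-1$ and reading off a CC autotopism, then invoking Lemma \lemref{23} --- is exactly the paper's, but the step that makes your version short is false. Setting $y=1$ in $xI(yx)=I(y)$ gives only the tautology $x\cdot I(x)=I(1)=1$; it does not give $I=J$, and WIP does not imply $I^2=\id_Q$. Indeed the paper records the inclusion $\textbf{Buch2SI}\subset\textbf{BuchWIP}$ as \emph{proper} in \S\secref{WIP}, so there exist WIP Buchsteiner loops without two-sided inverses, and your argument breaks down for those. The correct operator forms of the two WIP identities are $L_xIR_x=I$ and $R_xJL_x=J$, i.e.\ $IR_xJ=L_x\inv$ and $JL_xI=R_x\inv$; since $J=I\inv\neq I$ in general, you cannot insert ``$I^2=\id_Q$'' to split $IL_xR_x\inv I$ as $(IL_xI)(IR_x\inv I)$, and the remaining conjugate $IL_xJ$ (or $JL_xI$, depending on which transformation of Lemma \lemref{m-inv-atp} you use) is not controlled by WIP at all.

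This is precisely where the paper has to work harder: it evaluates $IL_xJ=R_{I^2(x)}\inv$ using WWIP (Theorem \thmref{wwip}, which holds in every Buchsteiner loop), obtaining the autotopism $(R_x,\,L_x\inv R_{I^2(x)},\,R_{I^2(x)})$, and then uses Theorem \thmref{abelian-factor} ($Q/N$ an abelian group) to write $I^2(x)=xn$ with $n\in N$ and absorb the nuclear residue $R_n$, arriving at $\mathbf{R}(x)=(R_x,\,L_x\inv R_x,\,R_x)$ and hence RCC. So your closing remark that there is ``no nucleus-valued residue to absorb'' and that no structural result about $Q/N$ is needed names exactly the gap: the residue is there, and WWIP together with the abelianness of $Q/N$ is what removes it. Repairing your argument along these lines recovers the paper's proof, up to the left/right mirror (you aim at $\mathbf{L}(x)$, the paper at $\mathbf{R}(x)$).
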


\begin{proof}
Starting with the Buchsteiner autotopism $\Buch(x)$, we
obtain from Lemma \lemref{m-inv-atp} (with $m = -1$) that
$( R_x\inv, I L_x R_x\inv J, I L_x J)$ is an autotopism
for each $x$. Now $I L_x J = R_{I^2(x)}\inv$ by WWIP,
and $I R_x\inv J = L_x$ by WIP. Taking inverses, we have
that $( R_x, L_x\inv R_{I^2(x)}, R_{I^2(x)})$ is an autotopism
for each $x\in Q$. Now since $Q/N$ is an abelian group,
$I^2(x) = xn$ for some $n\in N$, and so
$R_{I^2(x)} = R_x R_n$. Then $( R_x, L_x\inv R_x, R_x)$
is an autotopism for each $x\in Q$. But by (RCC'), this
implies $Q$ is an RCC loop. By Lemma \lemref{23}, $Q$ is
a CC loop.
\end{proof}

By Proposition \prpref{25}, a WIP Buchsteiner loop has every
square in its nucleus. However, using, for instance, a finite
model builder like Mace4 \cite{Mace4}, it is easy to find
examples of CC loops of order $16$ with nuclear squares, but
which do not have the WIP. Thus the variety of Buchsteiner CC
loops, which we denote here by \textbf{BuchCC}, is wider than
the variety of WIP Buchsteiner loops, here denoted by
\textbf{BuchWIP}.

In the other direction, let $Q$ be a Buchsteiner loop with
two-sided inverses, that is, in which $J(x) = I(x)$ for all
$x\in Q$. We denote the variety of such loops by
\textbf{Buch2SI}. Then the identity \eqnref{workhorse} becomes
$y \cdot I(xy)x = 1$, which is WIP. By Theorem \thmref{wip-buch},
$Q$ is a CC loop. In CC loops, the condition of having
two-sided inverses is equivalent to power-associativity \cite{kun}.
A detailed structure theory for power-associative CC loops,
including those with the WIP is given in \cite{ccp}.

Narrower still is the variety of Buchsteiner loops with
central squares, denoted by \textbf{BuchCS}. If each $x^2$ is central,
then $x^2 x = x x^2$,
and this identity is equivalent in CC loops to power associativity \cite{kun}.
A power associative CC loop has nuclear squares if and only
if it has the WIP, but there exist power associative
CC loops with WIP which squares which are not central \cite{ccp}.

Summarizing, we have the following proper inclusions among
varieties of Buchsteiner CC loops:
\begin{center}
\begin{tabular}{ccccccc}
\textbf{BuchCS} & $\subset$ & \textbf{Buch2SI} &
$\subset$ & \textbf{BuchWIP} & $\subset$ & \textbf{BuchCC}
\end{tabular}
\end{center}

\section{Associator calculus}
\seclabel{associator}

Let $Q$ be a loop. For $x,y,z \in Q$ define the \emph{associator}
$[x,y,z]$ by
\begin{equation}
\eqnlabel{associator}
(x \cdot yz)\, [x,y,z] = xy \cdot z
\end{equation}
We define the \emph{associator subloop} $A(Q)$ to be the smallest
normal subloop of $Q$ such that $Q/A(Q)$ is a group. Equivalently,
$A(Q)$ is the smallest normal subloop of $Q$ containing all associators.

The following observations, as well as Lemma \lemref{QA-group} below,
are based upon \cite[Lemma 4.2]{ccd}.

\begin{lem}
\lemlabel{assoc-basics}
Let $Q$ be a loop.
\begin{enumerate}
\item $[ax,y,z] = [x,y,z]$ for all $a \in N_{\lambda}$, $x,y,z\in Q$,
\item $[x,y,z] = [x,y,za]$ for all $a \in N_{\rho}$, $x,y,z\in Q$,
\item $[xa,y,z] = [x,ay,z]$ for all $a \in N_\mu \cap N_\lambda$, $x,y,z\in Q$,
\item $[x,ya,z] = [x,y,az]$ for all $a \in N_\mu \cap N_\rho$, $x,y,z\in Q$.
\end{enumerate}
If $N(Q) \unlhd Q$, then $[x,y,z]$ depends only on $xN, yN$, and $zN$.
\end{lem}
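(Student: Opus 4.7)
The plan is to prove (i)--(iv) by expanding the defining equation
$(x\cdot yz)[x,y,z] = xy\cdot z$ on both sides of each claimed identity,
using the nuclear associativity of $a$ to regroup each four-element product
into a common form, and then cancelling to extract the associators. The final
statement will then follow by iterating (i)--(iv) together with the normality
of $N$.

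For (i), with $a\in N_\lambda$, the identity $a\cdot uv = au\cdot v$ applied
three times gives $ax\cdot yz = a(x\cdot yz)$,
$(ax)y\cdot z = a(xy\cdot z)$, and
$a\cdot(x\cdot yz)\,[ax,y,z] = a(x\cdot yz)\cdot[ax,y,z]$. The defining
equation for $[ax,y,z]$ thus becomes
$a\cdot (x\cdot yz)[ax,y,z] = a\cdot (x\cdot yz)[x,y,z]$, and successive
cancellation of $a$ and of $x\cdot yz$ yields $[ax,y,z] = [x,y,z]$.
Part (ii) is the symmetric computation using $a\in N_\rho$ to migrate $a$
to the right at every step: both $x\cdot y(za)$ and $xy\cdot za$ rewrite as
$(\text{something})\cdot a$, and cancellation after aligning the two defining
equations gives the claim.

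For (iii), $a\in N_\mu$ gives $xa\cdot yz = x\cdot a(yz)$ and
$xa\cdot y = x\cdot ay$, while $a\in N_\lambda$ gives $a\cdot yz = ay\cdot z$.
Combining these, both $xa\cdot yz$ and $x\cdot (ay)z$ reduce to
$x(ay\cdot z)$, and both $(xa\cdot y)z$ and $(x\cdot ay)z$ reduce to
$x(ay)\cdot z$; the defining equations for $[xa,y,z]$ and $[x,ay,z]$
therefore match term for term, and cancellation of the common left factor
yields $[xa,y,z] = [x,ay,z]$. Part (iv) is proved analogously using
$a\in N_\mu\cap N_\rho$ to align the two defining equations.

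For the final claim, normality of $N$ gives $Nx = xN$ for each $x\in Q$, so
any $n\in N$ can be pushed past any factor at the cost of replacing it by
another element of $N$. Since $N\subseteq N_\lambda\cap N_\mu\cap N_\rho$,
parts (i)--(iv) may be applied to any nuclear element in any slot. Given
representatives $xn_1$, $yn_2$, $zn_3$ of three cosets of $N$, I would first
rewrite $xn_1 = n_1'x$ and clear $n_1'$ from the first slot via (i); then
migrate $n_2$ out of the second slot by combining (iii) with (i); and finally
clear $n_3$ from the third slot via (ii) or (iv). The main obstacle is
bookkeeping rather than conceptual: each reduction step produces a new
nuclear element in a shifted position, so one must verify that each of
(i)--(iv) is invoked with its correct nuclear hypothesis at each stage.
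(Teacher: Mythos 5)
Parts (i), (iii), (iv) and the final reduction modulo normality are fine and follow the paper's own argument; your chain of equalities for (iii) is exactly the one the paper gives. The genuine gap is in part (ii), which you dismiss as ``the symmetric computation.'' It is not symmetric, because the associator is defined asymmetrically: it multiplies $x\cdot yz$ on the \emph{right}. In (i) the hypothesis $a\in N_\lambda$ lets you pull $a$ out of the entire left-hand side, $(a(x\cdot yz))\cdot[ax,y,z]=a\bigl((x\cdot yz)[ax,y,z]\bigr)$, and then cancel $a$ by left division. In (ii) the rewriting you describe is correct and yields
\[
\bigl((x\cdot yz)a\bigr)\cdot[x,y,za]\;=\;(xy\cdot z)a\;=\;(x\cdot yz)\cdot\bigl([x,y,z]a\bigr),
\]
the last step again by $N_\rho$; but now $a$ sits \emph{inside} the left factor on one side and is attached to the associator on the other, and $a\in N_\rho$ gives no license to move it past $[x,y,za]$. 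There is no cancellation to perform. To extract $[x,y,za]=[x,y,z]$ from the displayed identity you would need $\bigl((x\cdot yz)a\bigr)s=(x\cdot yz)(sa)$ for $s=[x,y,z]$, which requires $a\in N_\mu$ as well \emph{and} the commutation $as=sa$; neither follows from $a\in N_\rho$. (The paper is equally terse here --- it calls (i) and (ii) ``immediate'' --- but the two cases are genuinely different, and the commutation of nuclear elements with associators is only derived later, in Lemma \lemref{nuc-normal}(iii), by a computation that itself invokes the present lemma.) Your concluding reduction inherits the same problem at the step where $n_3$ is cleared from the third slot via (ii) or (iv). A repair must either establish directly, under the hypotheses in force wherever the lemma is applied, that elements of $N$ commute with associators, or restate (ii) with the hypotheses it actually needs.
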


\begin{proof}
Parts (i) and (ii) are immediate from the definitions. For (iii),
we have $(xa\cdot yz)[xa,y,z] = (xa\cdot y)z = (x\cdot ay)z
= (x\cdot (ay)z)[x,ay,z]$. Since $xa\cdot yz = x\cdot (ay)z$, we
may cancel to get $[xa,y,z] = [x,ay,z]$, as claimed. The proof
of (iv) is similar.

Now if $N\unlhd Q$, then $xN = Nx$ for all $x \in Q$. Thus for
every $a \in N$ there exist $a_1, a_2 \in N$ such that
$[xa,y,z] = [a_1x,y,z]$ and $[x,y,az] = [x,y,za_2]$.
Applying (i)-(iv), we have the remaining assertion.
\end{proof}

In particular, if $N \unlhd Q$, we may regard $[-,-,-]$ as a mapping
$(Q/N)^3 \to A(Q)$. If $Q/N$ is, in addition, a group, then this
allows us to write, for instance,
$[x\inv,y,z]$ instead of $[I(x),y,z]$ or $[J(x),y,z]$. Similarly,
we can ignore how elements in arguments are associated;
for instance, we may write $[xyz,u,v]$ instead of $[x\cdot yz,u,v]$.

In a loop $Q$ with normal nucleus $N = N(Q)$, for each $x\in Q$,
we denote the restriction of $T_x$ to $N$ by
\[
\tau_x = T_x |_N \,.
\]
The first two parts of the following lemma have been observed many times.

\begin{lem}
\lemlabel{nuc-normal}
Let $Q$ be a loop with $N(Q)\unlhd Q$. Then
\begin{enumerate}
\item for each $x\in Q$, $\tau_x \in \Aut(N(Q))$,
\item $\tau : Q\to \Aut(N(Q)); x\mapsto \tau_x$ is a homomorphism,
\item $A(Q) \leq \ker(\tau)$,
\item $\tau$ descends to a homomorphism
$\bar{\tau} : Q/A(Q) \to \Aut(N(Q))$.
\end{enumerate}
\end{lem}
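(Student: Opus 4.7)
The plan is to handle the four parts in order, leveraging normality of $N$.

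For (i), I would first observe that since $N \unlhd Q$, we have $xN = Nx$ for every $x \in Q$, so the defining relation $xn = \tau_x(n) \cdot x$ forces $\tau_x$ to carry $N$ bijectively onto itself. To verify multiplicativity of $\tau_x|_N$, the approach is to compute both $\tau_x(n)\tau_x(m) \cdot x$ and $\tau_x(nm)\cdot x$ and show they coincide. Each rearrangement of parentheses in the first expression moves an element of $N$ across an associativity boundary, which is legitimate because $N \subseteq N_\lambda \cap N_\mu \cap N_\rho$; after shuffling one arrives at $x(nm)$, which equals $\tau_x(nm)\cdot x$, and right-cancellation of $x$ finishes the claim.

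For (ii), the same style of calculation yields $\tau_{xy} = \tau_x\tau_y$ on $N$: expand $\tau_x\tau_y(n) \cdot xy$, use the nuclearity of $\tau_y(n)$ and of $\tau_x\tau_y(n)$ to reassociate repeatedly until reaching $(xy)n = \tau_{xy}(n)\cdot xy$, and then cancel $xy$.

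For (iii), rather than attacking associators directly, I would invoke the defining universal property of $A(Q)$. The set $\ker\tau$ is automatically a normal subloop of $Q$ as the kernel of a loop homomorphism, so it suffices to exhibit $Q/\ker\tau$ as a group. Since $\tau: Q \to \Aut(N)$ is a loop homomorphism into a group, its image is closed under the group operation, contains $\id$, and inherits inverses via $\tau(x)\tau(I(x)) = \tau_{x I(x)} = \tau_1 = \id$, so $\tau(I(x)) = \tau(x)^{-1}$. Hence $\tau(Q)$ is a subgroup of $\Aut(N)$ and $Q/\ker\tau \cong \tau(Q)$ is a group, so minimality of $A(Q)$ forces $A(Q) \leq \ker\tau$. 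Part (iv) is then a direct application of the universal property of $Q/A(Q)$: since $A(Q) \leq \ker\tau$, the homomorphism $\tau$ factors uniquely through the projection to yield $\bar\tau: Q/A(Q) \to \Aut(N)$.

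The one place requiring mild care is the bookkeeping in parts (i) and (ii), where each associativity step must place a nuclear element in the position the relevant nucleus dictates; this is always available since every element we shift lies in all three one-sided nuclei. Everything else is a short formal argument from the universal property of $A(Q)$.
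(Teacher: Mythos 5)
Your proposal is correct. Parts (i), (ii) and (iv) run essentially as in the paper: normality gives $\tau_x(N)=N$, multiplicativity of $\tau_x$ and the identity $\tau_{xy}=\tau_x\tau_y$ are both obtained by the same reassociation computations you describe (the paper's version of the first is $T_x(a)T_x(b)\cdot x = T_x(a)(xb) = (T_x(a)x)b = x\cdot ab = T_x(ab)\cdot x$, each step licensed by $N\subseteq N_\lambda\cap N_\mu\cap N_\rho$), and (iv) is the universal property of the quotient. The genuine divergence is in (iii). The paper proves it by a direct computation: for $s=[x,y,z]$ and $a\in N$ one has $(x\cdot yz)\cdot as = (x\cdot yza)s = xy\cdot za = (xy\cdot z)a = (x\cdot yz)\cdot sa$, whence $as=sa$, so every associator lies in $\ker(\tau)$; it then invokes the characterization of $A(Q)$ as the smallest normal subloop containing all associators. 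You instead use the other (equivalent) characterization of $A(Q)$ as the smallest normal subloop with group quotient, together with the first isomorphism theorem for loops and the observation that $\tau(Q)$ is a subgroup of $\Aut(N(Q))$ (indeed, any associative loop is a group, so even your explicit inverse check is dispensable). Both arguments are sound; yours is shorter and more conceptual, while the paper's makes explicit the fact that associators centralize the nucleus, which is the form in which (iii) is reused later (Lemma \lemref{QA-group}(i) deduces $A(Q)\le Z(N(Q))$ from it). Since your version still delivers $A(Q)\le\ker(\tau)$, that later deduction goes through unchanged.
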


\begin{proof}
If $a,b \in N$, then for all $x\in Q$, $T_x(a)T_x(b)x = T_x(a)(xb) = (T_x(a)x)b
= x \cdot ab = T_x(ab)x$. This establishes (i).

Next, we have
$T_x T_y(a)\cdot xy = T_x(T_y(a))x\cdot y = x T_y(a) y = xy \cdot a
= T_{xy}(a)\cdot xy$ for $a\in N$, $x,y\in Q$. Thus $T_x T_y(a) =
T_{xy}(a)$, and so also $T_{x\ldiv y}(a) = T_x\inv T_y(a)$ and $T_{x\rdiv y}(a)
= T_x T_y\inv (a)$. This proves (ii).

Now set $s = [x,y,z]$. Then for $a\in N(Q)$, we
compute $(x \cdot yz)\cdot as =
(x\cdot yza)s = xy \cdot za = (xy\cdot z)a =
(x\cdot yz)\cdot sa$. Canceling, we have $as = sa$, that is,
$s\in \ker(\tau)$. Since $A(Q)$ is the smallest normal subloop
containing every associator, we have (iii).

Finally, (iv) follows immediately from (ii) and (iii).
\end{proof}

It will be convenient to introduce exponent notation for the
action of the group $Q/A(Q)$ on $N(Q)$ as follows:
\[
a^x = a^{xA(Q)} = \tau_x\inv (a) = T_x\inv (a) = x\ldiv (ax)\,,
\]
for $x\in Q$, $a\in N$. The notation $a^x$ allows acting
elements of $Q$ to appear as group term. For instance,
we may write $a^{xyz}$ instead of giving the exponent an
explicit association. Similarly, we will write
$a^{x\inv}$ instead of specifying which of $I(x)$ or
$J(x)$ is meant, since both have the same action upon
$a$ modulo $A(Q)$. Also, note that
\begin{equation}
\eqnlabel{e78}
ax = x(x \ldiv (ax))= xa^x
\textand
xa = ((xa)\rdiv x)x = a^{x\inv} x
\end{equation}
for all $x\in Q$, $a\in N\unlhd Q$.

Now we adjoin to these considerations the condition that $Q/N(Q)$
is a group, that is, $A(Q)\leq N(Q)$. In this case, $A(Q)$
coincides with the smallest subloop containing all associators, by
\cite[Lemma 2.5]{ccp}. This gives some insight into our situation,
but we shall not need this result in what follows.

We denote the restriction to $A(Q)$ of the homomorphism $\tau$ by
\[
\tilde{\tau}_x = \tau_x |_{A(Q)} \,.
\]
for each $x\in Q$.

\begin{lem}
\lemlabel{QA-group}
Let $Q$ be a loop with $A(Q)\leq N(Q)\unlhd Q$. Then
\begin{enumerate}
\item $A(Q) \leq Z(N(Q))$,
\item for each $x\in Q$, $\tilde{\tau}_x \in \Aut(A(Q))$,
\item $\tilde{\tau} : Q \to \Aut(A(Q)); x\mapsto \tilde{\tau}_x$
is a homomorphism,
\item $N(Q) \leq \ker(\tilde{\tau})$,
\item $\tilde{\tau}$ descends to a homomorphism
$\bar{\tilde{\tau}} : Q/N(Q) \to \Aut(A(Q))$.
\end{enumerate}
\end{lem}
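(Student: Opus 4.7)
The plan is to obtain everything by restriction from Lemma \lemref{nuc-normal}, taking advantage of the added hypothesis $A(Q) \leq N(Q)$ so that all five claims take place inside the group $N(Q)$ and loop-theoretic subtleties collapse into standard group-theoretic arguments.

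I would begin with part (i), which is actually a free consequence of Lemma \lemref{nuc-normal}(iii). That result says every $s \in A(Q)$ lies in $\ker(\tau)$, i.e.\ $\tau_s = \id_{N(Q)}$; unpacking the definition of $\tau_s(a) = s \ldiv (as)$ this says $sa = as$ for every $a \in N(Q)$. Since $A(Q) \subseteq N(Q)$, this is precisely the assertion $A(Q) \leq Z(N(Q))$.

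Parts (ii) and (iii) are bookkeeping. For (ii), by Lemma \lemref{nuc-normal}(i) the map $\tau_x \in \Aut(N(Q))$ is a group automorphism, and by normality of $A(Q)$ in $Q$ the inner mapping $T_x$ (together with its inverse, which is also an inner mapping) sends $A(Q)$ onto itself. Thus $\tilde{\tau}_x$ is the restriction of a group automorphism of $N(Q)$ to an invariant subgroup, hence lies in $\Aut(A(Q))$. Part (iii) follows immediately from Lemma \lemref{nuc-normal}(ii), because restricting each $\tau_x$ to the common invariant subgroup $A(Q)$ preserves composition and identity.

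Finally, (iv) is where (i) pays off a second time: for $n \in N(Q)$ and $s \in A(Q)$, both elements lie in the group $N(Q)$, so $\tilde{\tau}_n(s) = T_n(s) = nsn\inv$ computed inside $N(Q)$, and by (i) this equals $s$; hence $\tilde{\tau}_n = \id_{A(Q)}$ and $n \in \ker(\tilde{\tau})$. Part (v) is then immediate: (iii) gives a homomorphism $\tilde{\tau}$ on $Q$, and (iv) shows it is trivial on the normal subloop $N(Q)$, so it factors through $Q/N(Q)$. There is no substantive obstacle; the whole lemma is really a repackaging of Lemma \lemref{nuc-normal} once one passes from $Q$ to the group $N(Q)$ and observes that the fixed-point condition from Lemma \lemref{nuc-normal}(iii) lands inside $Z(N(Q))$.
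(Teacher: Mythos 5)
Your proposal is correct and follows essentially the same route as the paper, which derives (i) from Lemma \lemref{nuc-normal}(iii), (ii)--(iii) from Lemma \lemref{nuc-normal}(i)--(ii), (iv) from the centrality established in (i), and (v) from (iii) and (iv). You have merely spelled out the details (invariance of $A(Q)$ under $T_x$, the computation $\tilde{\tau}_n(s)=nsn\inv=s$ inside the group $N(Q)$) that the paper leaves implicit.
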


\begin{proof}
Part (i) follows from Lemma \lemref{nuc-normal}(iii).
Parts (ii) and (iii) follow from parts (i) and (ii) of
that same lemma. Part (iv) follows from (i), and
(v) follows from (iii) and (iv).
\end{proof}

As with the action of $Q/A(Q)$ upon $N(Q)$, it will be helpful
to use exponent notation for the action of $Q/N(Q)$
upon $A(Q)$:
\[
s^x = s^{x N(Q)} = \tilde{\tau}_x\inv (s) = x \ldiv (sx)
\]
for $x\in Q$, $s\in A(Q)$.

Let $Q$ be a loop such that
$A(Q)\leq N(Q) \unlhd Q$, and let $xy\cdot z = s(x \cdot yz)$. Then
$xy\cdot z = sx \cdot yz = xs^x\cdot yz = x \cdot s^x yz = x \cdot ys^{xy} z
= (x \cdot yz)s^{xyz}$, and so $s^{xyz} = [x,y,z]$. We can thus state:

\begin{lem}
\lemlabel{72}
Let $Q$ be a loop with $N \unlhd Q$. For all $x,y,z \in Q$,
\begin{align*}
xy \cdot z &= (x \cdot yz)[x,y,z] = [x,y,z]^{(xyz)\inv}(x\cdot yz), \\
x\cdot yz &= (xy\cdot z)[x,y,z]\inv = ([x,y,z]\inv)^{(xyz)\inv}(xy\cdot z).
\end{align*}
\end{lem}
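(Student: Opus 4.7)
The lemma is essentially a rephrasing of the definition of the associator, combined with the exponent notation for the action of $Q/N$ on nuclear elements. The proof should require almost no computation — the main task is to carefully justify that the exponent expressions are well-defined and correspond to the needed rearrangements.

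The plan is as follows. The first equality $xy\cdot z = (x\cdot yz)[x,y,z]$ is immediate from the defining equation \eqnref{associator}. For the second equality in the first line, note that since the context of \S\secref{associator} is $A(Q)\le N(Q)\unlhd Q$, the associator $[x,y,z]$ lies in $N$, so the exponent notation $[x,y,z]^{w}$ from \eqnref{e78} is defined for any $w\in Q$. Applying the second identity of \eqnref{e78}, namely $ua = a^{u\inv}u$ with $u = x\cdot yz$ and $a = [x,y,z]$, gives
\[
(x\cdot yz)\,[x,y,z] = [x,y,z]^{(x\cdot yz)\inv}\,(x\cdot yz).
\]
Since $x\cdot yz$ and any other association of $xyz$ differ by an associator, which lies in $N$, their images in $Q/N$ coincide; because $\tilde{\tau}$ descends to $Q/N$ (Lemma \lemref{nuc-normal}(iv) and the descent in Lemma \lemref{QA-group}(v)), we may replace the exponent $(x\cdot yz)\inv$ by $(xyz)\inv$ with no change in meaning, giving the second equality of the first line.

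For the second line, rearrange $xy\cdot z = (x\cdot yz)[x,y,z]$ by right-dividing by $[x,y,z]$: since $[x,y,z]\in N_\rho$, we have $((x\cdot yz)[x,y,z])\cdot [x,y,z]\inv = x\cdot yz$, hence $x\cdot yz = (xy\cdot z)[x,y,z]\inv$. The remaining equality $(xy\cdot z)[x,y,z]\inv = ([x,y,z]\inv)^{(xyz)\inv}(xy\cdot z)$ is then obtained by exactly the same application of \eqnref{e78} as above, this time with $u = xy\cdot z$ and $a = [x,y,z]\inv\in N$, again using that $xy\cdot z \equiv xyz \pmod N$ so that the exponent may be written $(xyz)\inv$.

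There is no genuine obstacle; the only point requiring care is the interpretation of the exponent $(xyz)\inv$, which is legitimate precisely because $Q/N$ is a group and the action $s\mapsto s^{x}$ on nuclear $s$ depends only on the class of $x$ modulo $N$. Everything else is a one-line manipulation from the definition of $[x,y,z]$ and from \eqnref{e78}.
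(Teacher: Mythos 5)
Your proof is correct and takes essentially the same route as the paper: the paper's argument (in the paragraph preceding the lemma) writes $xy\cdot z = s(x\cdot yz)$ and pushes $s$ rightward step by step to conclude $s^{xyz}=[x,y,z]$, which is just the multi-step unwinding of the single application of \eqnref{e78} that you use. Your added remark that the hypothesis should really be $A(Q)\le N\unlhd Q$ (so that $[x,y,z]\in N$ and the exponent notation applies) is well taken, since that is exactly the standing assumption of the paragraph in which the lemma appears.
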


Now we are ready to turn to Buchsteiner loops.

\begin{prop}
\prplabel{73}
Let $Q$ be a loop with $A(Q) \le N \unlhd Q$.
Then $Q$ is a Buchsteiner loop if and only if
\[
[x,y,z]^x = [y,z,x]\inv \frall x,y,z\in Q.
\]
\end{prop}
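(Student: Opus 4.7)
The strategy is to use Lemma~\lemref{72} to convert the Buchsteiner law, in its equivalent implication form (\buchimp), into a pure identity on associators. First I would set $s = [x,y,z] \in A(Q) \subseteq N$ and apply Lemma~\lemref{72} to write $xy \cdot z = (x \cdot yz)\,s = x(yz \cdot s)$, where the rebracketing is legal because $s \in N$. Thus the unique $u \in Q$ with $xu = xy \cdot z$ is $u = yz \cdot s$.

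Invoking (\buchimp) now yields $y \cdot zx = ux = (yz \cdot s)\,x$. Since $s \in N$ and $sx = x\,s^x$ with $s^x \in A(Q) \subseteq N$ (the latter by Lemma~\lemref{QA-group}(ii), which says $\tilde{\tau}_x \in \Aut(A(Q))$), this rearranges to $(yz \cdot s)\,x = (yz \cdot x)\,s^x = (yz \cdot x)\,[x,y,z]^x$. On the other hand, applying Lemma~\lemref{72} directly to the triple $(y,z,x)$ gives $y \cdot zx = (yz \cdot x)\,[y,z,x]^{-1}$. Cancelling the common factor $yz \cdot x$ yields the desired identity $[x,y,z]^x = [y,z,x]^{-1}$.

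For the converse, I would run the same manipulations in reverse. Assuming $[x,y,z]^x = [y,z,x]^{-1}$ and defining $u := yz \cdot [x,y,z]$, Lemma~\lemref{72} immediately gives $xu = xy \cdot z$, while the associator identity converts $(yz \cdot x)\,[y,z,x]^{-1}$ back into $(yz \cdot x)\,[x,y,z]^x = (yz \cdot [x,y,z])\,x = ux$, and Lemma~\lemref{72} recovers this as $y \cdot zx$. Since each step above is reversible, both directions of the biconditional (\buchimp) hold, hence so does (\buchid).

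The only real obstacle is the nuclear bookkeeping: each time $s$ or $s^x$ is pushed across a parenthesization we must cite either $s \in N$ (from the hypothesis $A(Q) \le N$) or that $s^x$ remains in $A(Q) \le N$. Once these membership facts are tracked via Lemmas~\lemref{nuc-normal} and~\lemref{QA-group}, the proof reduces to a short and essentially mechanical symbolic calculation.
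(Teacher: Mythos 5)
Your proof is correct and is essentially the paper's own argument: both reduce the Buchsteiner law to the identity $[x,y,z]^x=[y,z,x]\inv$ by computing the two sides via Lemma \lemref{72} together with the nuclear exponent bookkeeping of \eqnref{e78}. The only cosmetic difference is that you phrase the computation through the implication form (\buchimp) and right multiplication by $x$, whereas the paper computes $x\ldiv(xy\cdot z)$ and $(y\cdot zx)\rdiv x$ directly.
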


\begin{proof}
We have
$x \ldiv (xy\cdot z) = x\ldiv (x \cdot yz)[x,y,z] = yz[x,y,z]$
and
$(y\cdot zx)\rdiv x = ([y,z,x]\inv)^{x\inv z\inv y\inv}(yz) = yz ([y,z,x]\inv)^{x\inv}$,
by \eqnref{e78}.
\end{proof}

We know, by Theorem \thmref{abelian-factor}, that $Q/N$ is an
abelian group if $Q$ satisfies the Buchsteiner law. Thus the
order in which elements of $Q$ act on $A(Q)$ is of no consequence.

Let $Q$ be a loop with $A(Q) \le N(Q) \unlhd Q$. Then
$L(x,y)(z) = z[x,y,z]\inv$ for all $x,y,z \in Q$.
If, in addition, $Q$ is an $A_\ell$-loop, then also
$L(x,y)\inv (uv) = L(x,y)\inv (u) L(x,y)\inv (v)$,
for all $x,y,u,v \in Q$, and that is the same as
$uv[x,y,uv] = u[x,y,u]v[x,y,v] = uv [x,y,u]^v [x,y,v]$.
Thus
\begin{equation}
\eqnlabel{e79}
[x,y,uv] = [x,y,u]^v [x,y,v]\, \text{ if } A(Q)\le N(Q) \unlhd Q,\  Q \text{ an } A_\ell\text{-loop}.
\end{equation}

Every Buchsteiner loop is an $A_\ell$-loop by Corollary \corref{Alr}.
In the rest of this section $Q$ will denote a Buchsteiner loop.

\begin{lem}
\lemlabel{74}
$[z\inv,x,y] = [x,y,z]$ for all $x,y,z \in Q$.
\end{lem}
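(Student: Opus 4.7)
The plan is to apply Proposition \prpref{73} directly to the triple $(z\inv, x, y)$ and then reduce $[x, y, z\inv]$ back to $[x, y, z]$ via the ``multiplicativity in the third argument'' identity \eqnref{e79}. This identity is available because every Buchsteiner loop is an $A_\ell$-loop by Corollary \corref{Alr}, and by Theorem \thmref{abelian-factor} we have $A(Q) \le N \unlhd Q$ with $Q/N$ abelian, so the exponent notation $s^w$ is well-defined on $A(Q)$ and depends only on the coset $wN$. In particular it is harmless to identify the formal $z\inv$ with either $I(z)$ or $J(z)$, both of which lie in the coset $z\inv N$.

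First, Proposition \prpref{73} applied to $(z\inv, x, y)$ gives $[z\inv, x, y]^{z\inv} = [x, y, z\inv]\inv$. Acting by $z$ on both sides, and using that $s \mapsto s^w$ is a right action of $Q/N$ on $A(Q)$, yields
\[
[z\inv, x, y] = \bigl([x, y, z\inv]\inv\bigr)^z.
\]
Next I would apply \eqnref{e79} with $u = z$ and $v = I(z)$. Since $z\cdot I(z) = 1$ and $[x, y, 1] = 1$ (immediate from the definition of the associator), this reduces to
\[
1 = [x, y, z]^{z\inv}\cdot [x, y, z\inv]
\]
in the abelian group $A(Q)$. Solving gives $[x, y, z\inv]\inv = [x, y, z]^{z\inv}$, where the inverse passes through the exponent because the action is by automorphisms.

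Combining the two steps,
\[
[z\inv, x, y] = \bigl([x, y, z]^{z\inv}\bigr)^z = [x, y, z]^{z\inv\cdot z} = [x, y, z],
\]
which is what we want. There is no substantive obstacle: the whole argument is an exercise in bookkeeping the action of $Q/N$ on $A(Q)$, using only that the action is a group action by automorphisms (Lemma \lemref{QA-group}) together with Proposition \prpref{73} and \eqnref{e79}. Notably, the stronger cyclic consequences of Proposition \prpref{73} (such as $[z,x,y] = [x,y,z]^{xy}$) are not needed; a single direct application suffices.
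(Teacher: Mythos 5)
Your argument is correct and is essentially the paper's own proof: both apply Proposition \prpref{73} to the shifted triple $(z\inv,x,y)$, transport the exponent from $z\inv$ to $z$ using that $\tilde{\tau}$ gives a group action by automorphisms, and then use \eqnref{e79} on the product of $z$ with its inverse to identify $[x,y,z\inv]\inv$ with a conjugate of $[x,y,z]$. The only cosmetic difference is that the paper expands $[x,y,z\inv z]$ while you expand $[x,y,z\,I(z)]$; the bookkeeping is otherwise identical.
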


\begin{proof}
From Proposition \prpref{73}, we have
$[x,y,z\inv ]\inv = [z\inv,x,y]^{z\inv}$. Write this
as $[x,y,z\inv ]^z = [z\inv,x,y]\inv$. Furthermore, $1 = [x,y,z\inv z] =
[x,y,z\inv ]^z [x,y,z] = [z\inv,x,y]\inv [x,y,z]$, using \eqnref{e79}.
\end{proof}

The mapping $(x,y,z)\mapsto (z\inv,x,y)$ defines an action of the
cyclic group $\mathbb{Z}_6$ on $(Q/N)^3$. By the preceding lemma,
associators are invariant under this action. For ease of use, we
record this as follows.

\begin{cor}
\corlabel{75}
For all $x,y,z\in Q$,
\[
[x,y,z] = [z\inv,x,y] = [y\inv,z\inv,x] = [x\inv,y\inv,z\inv ] =  [z,x\inv,y\inv ] = [y,z,x\inv ]\,.
\]
\end{cor}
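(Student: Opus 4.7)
The plan is to obtain all six equalities by iterating Lemma \lemref{74}. That lemma, rewritten with fresh names $(a,b,c)$, says $[c\inv,a,b] = [a,b,c]$, so it asserts that the associator is invariant under the cyclic permutation $\sigma : (a,b,c) \mapsto (c\inv,a,b)$ acting on triples in $Q/N$. Since $Q/N$ is an abelian group (Theorem \thmref{abelian-factor}), inverses in the arguments are unambiguous, and $\sigma$ makes sense as a self-map of $(Q/N)^3$.

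First I would check that $\sigma$ has order $6$. Applying it successively to $(x,y,z)$ produces
\[
(x,y,z) \mapsto (z\inv,x,y) \mapsto (y\inv,z\inv,x) \mapsto (x\inv,y\inv,z\inv) \mapsto (z,x\inv,y\inv) \mapsto (y,z,x\inv) \mapsto (x,y,z),
\]
where the fourth step uses $(x\inv)\inv = x$, valid in the abelian group $Q/N$. Hence $\sigma$ generates a copy of $\mathbb{Z}_6$ acting on $(Q/N)^3$, and the six triples listed in the corollary are precisely the orbit of $(x,y,z)$ under this action.

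Next I would apply Lemma \lemref{74} along this orbit: substituting in turn $(a,b,c) = (x,y,z)$, then $(z\inv,x,y)$, then $(y\inv,z\inv,x)$, then $(x\inv,y\inv,z\inv)$, and finally $(z,x\inv,y\inv)$ gives the chain of equalities
\[
[x,y,z] = [z\inv,x,y] = [y\inv,z\inv,x] = [x\inv,y\inv,z\inv] = [z,x\inv,y\inv] = [y,z,x\inv],
\]
which is exactly the statement. No genuine obstacle arises; the only thing to verify is that the associator is a well-defined function on $(Q/N)^3$, which is guaranteed by Lemma \lemref{assoc-basics} together with normality of $N$ in $Q$ (Corollary \corref{15}), and that inverses compose correctly, which follows from $Q/N$ being an abelian group.
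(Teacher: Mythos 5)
Your proof is correct and matches the paper's own argument: the paper likewise observes that $(x,y,z)\mapsto(z\inv,x,y)$ generates a $\mathbb{Z}_6$-action on $(Q/N)^3$ under which, by Lemma \lemref{74}, the associator is invariant, and the corollary is just the record of the resulting orbit. The only cosmetic difference is that you invoke commutativity of $Q/N$ where only the fact that $Q/N$ is a group (so that $I(x)$ and $J(x)$ agree modulo $N$) is actually needed for the inverses to be unambiguous.
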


\begin{lem}
\lemlabel{76}
For all $x,y,z\in Q$,
\[
[x,y,z] = [x,y,z]^{x^2} = [x,y,z]^{y^2} = [x,y,z]^{z^2}\,.
\]
\end{lem}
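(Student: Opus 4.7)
The plan is to prove the first equality $[x,y,z]^{x^2} = [x,y,z]$ and then obtain the statements for $y^2$ and $z^2$ by applying this case to cyclic relabelings supplied by Corollary \corref{75}. Proposition \prpref{73} already gives one relation, $[x,y,z]^x = [y,z,x]^{-1}$, so if I can produce the companion identity $[x,y,z]^{x^{-1}} = [y,z,x]^{-1}$, the two combine to yield $[x,y,z]^x = [x,y,z]^{x^{-1}}$, which rearranges to $[x,y,z]^{x^2} = [x,y,z]$ since $Q/N$ acts on $A(Q)$ by automorphisms (Lemma \lemref{QA-group}).

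To produce the companion identity, I would apply Proposition \prpref{73} to the inverted triple $(x^{-1},y^{-1},z^{-1})$:
\[
[x^{-1},y^{-1},z^{-1}]^{x^{-1}} = [y^{-1},z^{-1},x^{-1}]^{-1}.
\]
Corollary \corref{75}, applied once directly and once to the cyclic relabeling $(x,y,z)\mapsto(y,z,x)$, allows me to rewrite $[x^{-1},y^{-1},z^{-1}] = [x,y,z]$ on the left and $[y^{-1},z^{-1},x^{-1}] = [y,z,x]$ on the right. That is precisely the companion identity.

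With the first-slot case $[a,b,c]^{a^2} = [a,b,c]$ now valid for every triple, I would invoke Corollary \corref{75} again to produce cyclic relabelings of $(x,y,z)$ that place $y$ or $z$ in the first coordinate while preserving the associator. Entry six gives $[x,y,z] = [y,z,x^{-1}]$, so specializing the first-slot case to $(a,b,c) = (y,z,x^{-1})$ yields $[x,y,z]^{y^2} = [x,y,z]$; entry five gives $[x,y,z] = [z,x^{-1},y^{-1}]$, and specializing to $(a,b,c) = (z,x^{-1},y^{-1})$ yields $[x,y,z]^{z^2} = [x,y,z]$.

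There is no serious obstacle: the entire argument is a symmetry bookkeeping exercise resting on Proposition \prpref{73} and the six-fold invariance of Corollary \corref{75}. The only point that needs care is that Proposition \prpref{73} must be applied to an inverted triple in order to produce an identity whose exponent is $x^{-1}$ rather than $x$, and one has to recognize that the right-hand side $[y^{-1},z^{-1},x^{-1}]$ collapses — via the same corollary — to $[y,z,x]$, at which point the two identities align perfectly.
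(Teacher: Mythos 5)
Your proposal is correct and follows essentially the same route as the paper: the first-slot case is obtained from two applications of Proposition \prpref{73} mediated by the six-fold symmetry of Corollary \corref{75} (the paper computes $[x,y,z]^{x^2}=([y,z,x]^x)\inv=([x,y\inv,z\inv]^x)\inv=[y\inv,z\inv,x]=[x,y,z]$ directly, whereas you package the second application as the companion identity $[x,y,z]^{x\inv}=[y,z,x]\inv$ and compare exponents --- a cosmetic difference). Your reductions of the $y^2$ and $z^2$ cases via $[x,y,z]=[y,z,x\inv]$ and $[x,y,z]=[z,x\inv,y\inv]$ are exactly the paper's.
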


\begin{proof}
Using Proposition \prpref{73} and Corollary \corref{75},
we get $[x,y,z]^{x^2} = ([y,z,x]^x)\inv = ([x,y\inv,z\inv ]^x)\inv
= [y\inv,z\inv,x] = [x,y,z]$. Further, $[x,y,z]^{y^2} = [y,z,x\inv ]^{y^2} = [y,z,x\inv ]
= [x,y,z]$ and $[x,y,z]^{z^2} = [z,x\inv,y\inv ]^{z^2} = [z,x\inv,y\inv ] = [x,y,z]$.
\end{proof}

\begin{lem}
\lemlabel{77}
For all $x,y,z\in Q$,
\[
[x,y,z]^x = [y,z,x]\inv\,, \quad [x,y,z]^z = [z,x,y]\inv\,,
\quad [x,y,z]^y = [x,y\inv,z]\inv\,,
\]
\[
[x,y\inv,z]^x = [z,x,y]\inv\,,\quad [x,y\inv,z]^y = [x,y,z]\inv\,,
\quad [x,y\inv,z]^z = [y,z,x]\inv\,.
\]
\end{lem}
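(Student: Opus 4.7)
The first identity is simply a restatement of Proposition~\prpref{73}; it is listed for parallelism with the other five. The remaining five identities follow from a single bookkeeping strategy: combine Proposition~\prpref{73} with the six-cycle invariance of Corollary~\corref{75} to move the desired element into the first slot of the associator, then apply Proposition~\prpref{73}, and finally use Lemma~\lemref{76} to replace an exponent $x\inv$ (resp.\ $y\inv$, $z\inv$) by $x$ (resp.\ $y$, $z$), since $(-)^{x^2}$ acts trivially on associators.

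Concretely, for $[x,y,z]^z = [z,x,y]\inv$, I would first rewrite $[x,y,z] = [z\inv,x,y]$ via Corollary~\corref{75}, then apply Proposition~\prpref{73} in the form $[a,b,c]^a = [b,c,a]\inv$ with $(a,b,c) = (z\inv,x,y)$ to get $[x,y,z]^{z\inv} = [x,y,z\inv]\inv$, convert $z\inv$ to $z$ in the exponent by Lemma~\lemref{76}, and finish by noting $[x,y,z\inv] = [z,x,y]$ via Corollary~\corref{75}. The identity $[x,y,z]^y = [x,y\inv,z]\inv$ is handled the same way starting from $[x,y,z] = [y\inv,z\inv,x]$ and ending with the rewrite $[z\inv,x,y\inv] = [x,y\inv,z]$. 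For $[x,y\inv,z]^x = [z,x,y]\inv$, Proposition~\prpref{73} applies directly and Corollary~\corref{75} converts $[y\inv,z,x]$ to $[z,x,y]$.

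The last two identities follow by substitution. Replacing $y$ by $y\inv$ in the third identity yields $[x,y\inv,z]^{y\inv} = [x,y,z]\inv$, and Lemma~\lemref{76} converts $y\inv$ to $y$ in the exponent, giving the fifth identity. Likewise, replacing $y$ by $y\inv$ in the second identity and applying Lemma~\lemref{76} gives the sixth.

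There is no real obstacle here; the proof is a purely formal manipulation using the $\mathbb{Z}_6$-symmetry of Corollary~\corref{75} together with the quotient by squares of Lemma~\lemref{76}. The one thing to watch is keeping the cyclic rotations straight when applying Proposition~\prpref{73}, so it is worth writing out the table of six identities in the order given and processing them in the order $(1) \to (2) \to (3) \to (4) \to (5) \to (6)$ so that each can be reduced to a previously established line.
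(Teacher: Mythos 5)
Your proof is correct and follows essentially the same route as the paper's: each identity is reduced to Proposition~\prpref{73} by using the $\mathbb{Z}_6$-invariance of Corollary~\corref{75} to move the acting element into the first slot, with Lemma~\lemref{76} absorbing the stray squares that appear in the exponents. The only quibble is the final step for $[x,y\inv,z]^z$: after substituting $y\mapsto y\inv$ in the second identity, what finishes the argument is the rewrite $[z,x,y\inv]=[y,z,x]$ from Corollary~\corref{75} rather than Lemma~\lemref{76}, since the exponent $z$ is untouched by that substitution.
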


\begin{proof}
All of these equalities can be easily proved by means of
Proposition \prpref{73} and Corollary \corref{75}.
For example, $[x,y,z]^z = [z,x\inv,y\inv ]^z = [x\inv,y\inv,z]\inv = [z,x,y]\inv$.
\end{proof}

From Lemmas \lemref{76} and \lemref{77} we see that $x$, $y$ and $z$ induce an action
by a group of exponent two on the set $\{[x,y,z]^{\pm 1}, [y,z,x]^{\pm 1},
[z,x,y]^{\pm 1}, [x,y\inv,z]^{\pm 1}\}$.

The next lemma is easy. We need to verify that the behavior described in
\eqnref{e79} is true in every position.

\begin{lem}
\lemlabel{78}
For $x,y,u,v \in Q$,
\[
[uv, x, y] = [u,x,y]^v[v,x,y]\,, \qquad
[x,uv,y] = [x,u,y]^v[x,v,y]\,,
\]
\[
[x,y,uv] = [x,y,u]^v [x,y,v]\,.
\]
\end{lem}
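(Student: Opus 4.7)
The plan is to leverage the cyclic symmetry of associators established in Corollary~\corref{75} to reduce the first two identities to the third, which is exactly \eqnref{e79} (available because every Buchsteiner loop is an $A_\ell$-loop by Corollary~\corref{Alr}).

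First, for $[uv,x,y]$: I would use the symmetry $[a,b,c] = [b\inv, c\inv, a]$ from Corollary~\corref{75} to move $uv$ into the third slot, giving $[uv,x,y] = [x\inv, y\inv, uv]$. Then I apply \eqnref{e79} in that position to obtain
\[
[uv,x,y] = [x\inv, y\inv, u]^{v}\, [x\inv, y\inv, v].
\]
Finally I use the symmetry $[a,b,c] = [c, a\inv, b\inv]$ (also from Corollary~\corref{75}) with $(a,b,c) = (x\inv, y\inv, u)$ and $(x\inv, y\inv, v)$ respectively, which gives $[x\inv,y\inv,u] = [u,x,y]$ and $[x\inv,y\inv,v] = [v,x,y]$, yielding the desired identity.

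For $[x,uv,y]$: I apply the symmetry $[a,b,c] = [c\inv,a,b]$ to move $uv$ into the third slot, giving $[x,uv,y] = [y\inv, x, uv]$. Then \eqnref{e79} yields
\[
[x,uv,y] = [y\inv, x, u]^{v}\, [y\inv, x, v],
\]
and applying the same symmetry in reverse (noting that $[x,u,y] = [y\inv, x, u]$ and similarly for $v$) gives the desired identity. The exponent action of $v$ on the two sides of each symmetry equation agrees because equal elements of $A(Q)$ are acted upon equally, so no subtlety arises there.

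There is no genuine obstacle here --- once one has written down the symmetries in Corollary~\corref{75} together with \eqnref{e79}, the only task is to pick the correct cyclic shift that lands $uv$ in the third coordinate without introducing an inverse on it. The bookkeeping is routine, but it is worth noting that the two non-trivial identities require two different elements of the dihedral symmetry group acting on $(Q/N)^3$: a ``shift by two with double inversion'' for the first identity and a ``shift by one with single inversion'' for the second.
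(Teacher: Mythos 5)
Your argument is correct, and it reduces the first two identities to \eqnref{e79} by associator symmetry, which is exactly the strategy of the paper; the only difference is the symmetry you invoke. The paper applies the exponent $y$ to $[x,uv,y]$ and uses Lemma \lemref{77} twice, computing $[x,uv,y]^y = [y,x,uv]\inv = ([y,x,u]^v[y,x,v])\inv = ([x,u,y]^v[x,v,y])^y$ and then cancelling the exponent $y$ (legitimate since $\tilde\tau_y$ is an automorphism of $A(Q)$); you instead substitute directly through the identities of Corollary \corref{75}, which lands $uv$ in the third slot without inverting it and so avoids the final cancellation step. Both routes are equally rigorous and rest on the same two prior facts (Corollary \corref{75} and \eqnref{e79}, noting that Lemma \lemref{77} is itself a consequence of Corollary \corref{75} and Proposition \prpref{73}); yours is marginally more economical. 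One small correction to your closing remark: the two symmetries you use, $[a,b,c]=[b\inv,c\inv,a]$ and $[a,b,c]=[c\inv,a,b]$, are both powers of the single generator $(x,y,z)\mapsto(z\inv,x,y)$, so the group acting here is the cyclic group $\mathbb{Z}_6$ of Corollary \corref{75}, not a dihedral group; this does not affect the proof.
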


\begin{proof}
From Lemma \lemref{77}, we get
$[x,uv,y]^y = [y,x,uv]\inv = ([y,x,u]^v[y,x,v])\inv =
[x,u,y]^{yv}[x,v,y]^y = ([x,u,y]^v [x,v,y])^y$.
The remaining case can be proved similarly.
\end{proof}

\begin{lem}
\lemlabel{79}
For all $x,y\in Q$, $[x,y,y] = [y,y,x]$.
\end{lem}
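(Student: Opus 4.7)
The plan is to show $[x,y,y]=[y,y,x]$ by first transporting both copies of $y$ into slots where Lemma~\lemref{78}'s bilinearity can be applied non-trivially, and then collapsing the resulting expression with Lemma~\lemref{76}.

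First I would apply Corollary~\corref{75} twice. With $(x,y,z)=(x,y,y)$ the $\mathbb{Z}_6$-action gives $[x,y,y]=[y^{-1},x,y]$, and a second rotation on $(y^{-1},x,y)$ gives $[y^{-1},x,y]=[y^{-1},y^{-1},x]$. So
\[
[x,y,y] = [y^{-1},y^{-1},x].
\]
The point of this move is that now both $y$-factors are in the first two slots of a single associator, where I can break them off one at a time.

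Next I would expand $[y^{-1},y^{-1},x]$ using Lemma~\lemref{78}. Note that $[1,-,-]=1$, since $(1\cdot yz)[1,y,z]=(1\cdot y)z=yz$. Applying the first-slot bilinearity to $1=y\cdot y^{-1}$ (in $Q/N$) gives
\[
1 = [y,y^{-1},x]^{y^{-1}}\,[y^{-1},y^{-1},x],
\qquad \text{hence} \qquad
[y^{-1},y^{-1},x] = \bigl([y,y^{-1},x]^{y^{-1}}\bigr)^{-1}.
\]
Applying the second-slot bilinearity to the same product $1=y\cdot y^{-1}$ inside $[y,y^{-1},x]$ gives
\[
1 = [y,y,x]^{y^{-1}}\,[y,y^{-1},x],
\qquad \text{hence} \qquad
[y,y^{-1},x] = \bigl([y,y,x]^{y^{-1}}\bigr)^{-1}.
\]
Substituting the second equation into the first and taking the inverse twice yields $[y^{-1},y^{-1},x]=[y,y,x]^{y^{-2}}$.

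Finally, Lemma~\lemref{76} says $[y,y,x]$ is fixed by the action of $y^2$, hence also by $y^{-2}$, so $[y,y,x]^{y^{-2}}=[y,y,x]$. Combining everything, $[x,y,y]=[y^{-1},y^{-1},x]=[y,y,x]$, as desired.

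The only real obstacle is seeing the initial rewrite via Corollary~\corref{75}: if one tries to attack $[x,y,y]=[y,y,x]$ directly through Lemma~\lemref{78} (for instance by expanding $[y,y,x\cdot x^{-1}]$ or $[x\cdot x^{-1},y,y]$), one only recovers Proposition~\prpref{73} and runs into a circular relation $A^x=B^{-1}$, $B^x=A^{-1}$ with no way to decouple $A$ from $B$. Placing \emph{both} $y$'s in the linearizable slots is what lets Lemma~\lemref{76} deliver the final cancellation.
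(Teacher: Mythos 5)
Your proof is correct, but it takes a genuinely different route from the paper's. The paper's argument is a two-line specialization of Lemma \lemref{77}: the identity $[x,y,z]^z = [z,x,y]\inv$ with $z=y$ gives $[x,y,y]^y = [y,x,y]\inv$, while $[x,y,z]^x = [y,z,x]\inv$ applied to the triple $(y,y,x)$ gives $[y,y,x]^y = [y,x,y]\inv$; since the action of $y$ on $A(Q)$ is by automorphisms, equality of the two images forces $[x,y,y]=[y,y,x]$. You instead use Corollary \corref{75} to rewrite $[x,y,y]$ as $[y\inv,y\inv,x]$, peel off the two inverses one slot at a time with the expansion formulas of Lemma \lemref{78} applied to $1 = y\cdot y\inv$, arrive at $[y,y,x]^{y^{-2}}$, and collapse that with Lemma \lemref{76}. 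All of the results you cite are established before Lemma \lemref{79} in the paper, so there is no circularity, and each step checks out (in particular the substitution $\bigl(\bigl([y,y,x]^{y\inv}\bigr)\inv\bigr)^{y\inv}=\bigl([y,y,x]^{y^{-2}}\bigr)\inv$ is legitimate because the action is by automorphisms of the abelian group $A(Q)$). The paper's version is shorter because Lemma \lemref{77} already packages the cyclic symmetry that you reconstruct by hand; your version has the minor virtue of showing explicitly where the fixed-point property of squares from Lemma \lemref{76} enters. One cosmetic remark: you justify $[1,y,z]=1$ but the second expansion also needs $[y,1,x]=1$; this is equally immediate from the definition \eqnref{associator}, but it should be stated.
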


\begin{proof}
By Lemma \lemref{77}, $[x,y,y]^y = [y,x,y]\inv = [y,y,x]^y$.
\end{proof}

\begin{prop}
\prplabel{710}
Let $Q$ be a Buchsteiner loop. For all $x,y,z \in Q$,
\[
[y,z,x][x,y,z] = [z,x,y][x,y\inv,z]
\]
\end{prop}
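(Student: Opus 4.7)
The plan is to work additively in $A(Q)$, regarded as an abelian group on which the abelian group $Q/N$ acts (using Lemma~\lemref{QA-group} and Theorem~\thmref{abelian-factor}). First I rewrite the identity in this setting. Using Proposition~\prpref{73} and Lemma~\lemref{77}, one has $[y,z,x]=-x\cdot[x,y,z]$, $[z,x,y]=-z\cdot[x,y,z]$, and $[x,y\inv,z]=-y\cdot[x,y,z]$, so Proposition~\prpref{710} is equivalent to the single additive relation
\[
(1-x+y+z)\cdot[x,y,z]=0 \text{ in } A(Q).
\]

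To derive this, I would apply Lemma~\lemref{78} twice, each time exploiting the abelianness of $Q/N$ (so $xy\equiv yx$ modulo $N$, hence $[xy,\cdot,\cdot]=[yx,\cdot,\cdot]$ by Lemma~\lemref{assoc-basics}) to produce two distinct expansions of the same associator. Expanding $[xy,z,x]$ as $[x,z,x]^y\cdot[y,z,x]$ and as $[y,z,x]^x\cdot[x,z,x]$, equating, and simplifying by means of Corollary~\corref{75} with Lemma~\lemref{76} (which give $[x,z,x]=-x\cdot[x,x,z]$) together with Lemma~\lemref{77} (which gives $x\cdot[y,z,x]=-[x,y,z]$), I expect the identity to collapse to $(x-1)\cdot[x,y,z]=(y-1)\cdot[x,x,z]$. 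The analogous calculation applied to $[z,x,xy]$, simplified by Lemma~\lemref{79} which gives $[z,x,x]=[x,x,z]$, should yield $(y-1)\cdot[x,x,z]=(x-1)\cdot[z,x,y]$.

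Chaining the two displays produces $(x-1)\cdot[x,y,z]=(x-1)\cdot[z,x,y]$. To finish I would expand the right-hand side: Lemma~\lemref{77} gives $[z,x,y]=-z\cdot[x,y,z]$, and a short chain through Proposition~\prpref{73} and Lemma~\lemref{77} (using abelianness of $Q/N$) yields the auxiliary identity $z\cdot[y,z,x]=y\cdot[x,y,z]$, from which $(x-1)\cdot[z,x,y]=(y+z)\cdot[x,y,z]$ follows. Combining gives $(x-1)\cdot[x,y,z]=(y+z)\cdot[x,y,z]$, i.e.\ $(1-x+y+z)\cdot[x,y,z]=0$, which is the reformulated identity. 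The delicate step is the auxiliary identity $z\cdot[y,z,x]=y\cdot[x,y,z]$: Lemma~\lemref{77} only records the six ``diagonal'' actions (action of $x$, $y$, or $z$ on an associator where that variable appears in a prescribed position), so this off-diagonal action must be obtained by chasing through the cyclic symmetries of Corollary~\corref{75}.
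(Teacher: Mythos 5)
Your proof is correct and follows essentially the same route as the paper: both rest on expanding an associator containing $xy$ versus $yx$ via Lemma \lemref{78} and the commutativity of $Q/N$ (Theorem \thmref{abelian-factor}); your second expansion $[z,x,xy]=[z,x,yx]$ is literally the paper's, and your first intermediate relation $(x-1)\cdot[x,y,z]=(y-1)\cdot[x,x,z]$ is the paper's displayed identity $[x,x,z]^y=[x,x,z][y,z,x]\inv[x,y,z]\inv$ in additive form (note only that the expansion of $[xy,z,x]$ actually produces this relation with an extra factor of $x$ on the right-hand side, which is harmless since $x^2$ acts trivially on associators by Lemma \lemref{76}). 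The paper concludes more directly by equating its two expressions for $[x,x,z]^y$, so your final chaining through $(x-1)\cdot[x,y,z]=(x-1)\cdot[z,x,y]$ and the off-diagonal identity $x\cdot[z,x,y]=y\cdot[x,y,z]$ is sound but an avoidable detour.
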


\begin{proof}
We have
\[
[x,x,z]^y [x,y,z]
= [x,xy,z] = [x,yx,z] = [x,y,z]^x [x,x,z] = [x,x,z][y,z,x]\inv\,,
\]
by Lemmas \lemref{77} and \lemref{78}. From that we obtain
\[
[x,x,z]^y = [x,x,z][y,z,x]\inv [x,y,z]\inv\,,
\]
while
\[
[x,x,z]^y = [z,x,x]^y = [x,x,z][z,x,y]\inv [x,y\inv,z]\inv
\]
is a consequence of
\[
[z,x,x]^y[z,x,y] = [z,x,xy] = [z,x,yx] = [z,x,y]^x [z,x,x] =
[x,x,z][x,y\inv,z]\inv\,.
\]
\end{proof}

Express the equality of Proposition \prpref{710} as
\begin{equation}
\eqnlabel{e710}
[x,y\inv,z] = [y,z,x][x,y,z][z,x,y]\inv\,.
\end{equation}
From Corollary \corref{75} we know that
$[x,y\inv,z] = [y,z\inv,x] = [y,x\inv,z]$. The right hand
side of \eqnref{e710} hence retains its value when all arguments
are subjected to a cyclic shift. Thus, for example,
\[
[y,z,x][x,y,z][z,x,y]\inv = [z,x,y][y,z,x][x,y,z]\inv\,,
\]
and therefore
\[
[x,y,z][z,x,y]\inv = [x,y,z][x,y,z]^z = [x,y,z^2] = [x,y,z]\inv [z,x,y]\,.
\]
The leftmost and the rightmost terms of this equation are mutually inverse,
and so the element expressed by this equation has to have exponent $2$.

\begin{prop}
\prplabel{711}
Let $Q$ be a Buchsteiner loop. For all $x,y,z\in Q$,
\[
1 = [x^2,y,z]^2 = [x,y^2,z]^2 = [x,z,y^2]^2 = [x^4,y,z] = [x,y^4,z] = [x,y,z^4]\,.
\]
\end{prop}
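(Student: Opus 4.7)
The plan is to extract all six identities from the exponent-$2$ fact $[x,y,z^2]^2 = 1$ already obtained in the paragraph preceding the proposition, combined with the cyclic/inversion symmetry of Corollary \corref{75} and the multiplicativity and square-triviality supplied by Lemmas \lemref{78}, \lemref{77}, and \lemref{76}.

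The three ``square has exponent $2$'' identities come out almost for free. Corollary \corref{75} gives $[x,y,z^2] = [z^{-2},x,y] = [y^{-1}, z^{-2}, x]$, and since inversion is a bijection on $Q/N$ we may rename (absorbing the inversion into the squared argument) to obtain $[x^2,y,z]^2 = 1$ and $[x,y^2,z]^2 = 1$; the third identity $[x,z,y^2]^2 = 1$ is $[x,y,z^2]^2 = 1$ after interchanging $y$ and $z$.

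For the fourth-power identity $[x^4,y,z] = 1$, I would write $[x^4,y,z] = [x^2,y,z]^{x^2}\,[x^2,y,z]$ using Lemma \lemref{78}, and further expand $[x^2,y,z] = [x,y,z]^x\,[x,y,z] = [y,z,x]^{-1}\,[x,y,z]$ via Lemmas \lemref{78} and \lemref{77}. The $x^2$-action is an automorphism of $A(Q)$, and Lemma \lemref{76} guarantees that $x^2$ fixes both $[x,y,z]$ (first slot) and $[y,z,x]$ (third slot); hence $[x^2,y,z]^{x^2} = [x^2,y,z]$, and $[x^4,y,z] = [x^2,y,z]^2 = 1$ by the previous step. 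The remaining two fourth-power identities follow by applying Corollary \corref{75} to cycle $x^4$ into the second and third positions respectively.

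The only subtle point is that Lemma \lemref{76} does not directly say $x^2$ fixes $[x^2,y,z]$ (it would give fixedness only under $x^4$). The expansion $[x^2,y,z] = [y,z,x]^{-1}\,[x,y,z]$ sidesteps this by reducing to associators in which $x$ appears only singly, at which point Lemma \lemref{76} applies verbatim.
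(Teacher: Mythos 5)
Your proof is correct and takes essentially the same route as the paper: both start from $[x,y,z^2]^2=1$ obtained in the preceding paragraph, transport the square through the argument positions via the associator symmetries, and reduce $[x^4,y,z]$ to $[x^2,y,z]^{x^2}[x^2,y,z]=[x^2,y,z]^2=1$. Your explicit expansion $[x^2,y,z]=[y,z,x]^{-1}[x,y,z]$, which reduces the fixedness of $[x^2,y,z]$ under the $x^2$-action to associators where $x$ occurs only singly, makes precise a step the paper compresses into a bare citation of Lemmas \lemref{79} and \lemref{76}, but the underlying argument is the same.
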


\begin{proof}
By the preceding paragraph, $[x,y,z^2]^2 = 1$. It is clear from
Lemma \lemref{77} that the order of an associator element does not change when the
arguments are cyclically shifted. Finally, $[x^4,y,z] = [x^2,y,z]^{x^2}[x^2,y,z]
= [x^2,y,z]^2 = 1$, by Lemmas \lemref{79} and \lemref{76}.
\end{proof}

\begin{thm}
\thmlabel{712}
Let $Q$ be a Buchsteiner loop with nucleus $N(Q)$. Then
$Q/N(Q)$ is an abelian group of exponent $4$.
\end{thm}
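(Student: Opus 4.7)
The plan is to read off the result almost immediately from the associator machinery already developed, in particular from Proposition \prpref{711}. By Theorem \thmref{abelian-factor}, $Q/N$ is an abelian group, and by Corollary \corref{15}, $N$ is a normal subloop. In particular, $A(Q)$, being the smallest normal subloop with $Q/A(Q)$ a group, satisfies $A(Q) \le N \unlhd Q$. Thus the hypothesis needed for the entire associator calculus of this section is in force, and Proposition \prpref{711} applies.

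Next, I would extract the statement $[x^4, y, z] = 1$ for all $x,y,z \in Q$ from Proposition \prpref{711}. Unwinding the definition of the associator \eqnref{associator}, this says $(x^4 \cdot yz)\cdot 1 = x^4 y \cdot z$ for all $y,z \in Q$, which is exactly the statement $x^4 \in N_\lambda$. By Corollary \corref{equal-nuc}, $N_\lambda = N$, so $x^4 \in N$ for every $x \in Q$. Equivalently, $(xN)^4 = N$ in $Q/N$, so every element of $Q/N$ has order dividing $4$.

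Combined with Theorem \thmref{abelian-factor}, which gives commutativity and associativity on $Q/N$, this yields that $Q/N$ is an abelian group of exponent (dividing) $4$, as required. There is no obstacle here beyond assembling the ingredients; all of the real work was done in establishing the cyclic symmetry of associators (Corollary \corref{75}) and the exponent bounds in Proposition \prpref{711}. The later construction in \S\secref{compatible}--\secref{extend} will show that the bound $4$ is actually achieved, justifying the word ``exponent'' rather than ``dividing $4$''.
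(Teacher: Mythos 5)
Your proof is correct and follows essentially the same route as the paper: both derive $x^4\in N$ from Proposition \prpref{711} and combine this with Theorem \thmref{abelian-factor}. The only cosmetic difference is that you use $[x^4,y,z]=1$ together with $N_\lambda=N$ (Corollary \corref{equal-nuc}), whereas the paper characterizes membership in $N$ by the vanishing of associators in all three positions and invokes all three equalities $[x^4,y,z]=[x,y^4,z]=[x,y,z^4]=1$; both are valid.
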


\begin{proof}
We have $x\in N(Q)$ if and only if $[x,y,z] = [y,x,z] = [y,z,x]
= 1$ for all $y,z \in Q$. The statement thus follows from Theorem
\thmref{abelian-factor} and Proposition \prpref{711}.
\end{proof}

%

\section{A compatible system of associators}
\seclabel{compatible}

The properties of associators with which we start this section are
included to make the construction of the ensuing example
more transparent.

\begin{lem}
\lemlabel{81}
Let $Q$ be a Buchsteiner loop. For all $x,y,z\in Q$,
\begin{enumerate}
\item
$[x,y,z]^2 = [y,z,x]^2 = [z,x,y]^2$, and $[x^2,y,z] = [y,z,x^2] = [z,x^2,y]$,
\item if $a = [x^2,y,z]$, then $a^x = a^y = a^z = a$,
\item $[x^2,y^2,z] = [x^2,y,z^2] = [x,y^2,z^2] = 1$, and
\item $[x,y,y]^{z^2} = [x,y,y]$ and $[y,x,y]^{z^2} = [y,x,y]$.
\end{enumerate}
\end{lem}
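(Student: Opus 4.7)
I will prove parts (i)--(iv) in order, with each building on the previous. The main tools are Lemma~\lemref{assoc-basics} (associators depend only on nuclear cosets), Corollary~\corref{75} (six-fold cyclic invariance of the associator), Lemma~\lemref{76} (squares of arguments act trivially), Lemma~\lemref{77} (action of a single argument on an associator), Lemma~\lemref{78} (the cocycle identity in each slot), Proposition~\prpref{710} (the four-associator identity), Proposition~\prpref{711} (vanishings involving squares and fourth powers), and the abelianness of $A(Q)$ from Lemma~\lemref{QA-group}(i).

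For part (i), apply Lemma~\lemref{78} and Lemma~\lemref{77} to obtain $[x^2,y,z] = [y,z,x]\inv [x,y,z]$ and $[y,z,x^2] = [x,y,z]\inv [y,z,x]$. These are mutually inverse elements in the abelian group $A(Q)$, and each has order $2$ by Proposition~\prpref{711}, so they coincide; this gives $[x^2,y,z] = [y,z,x^2]$, and squaring in $A(Q)$ yields $[x,y,z]^2 = [y,z,x]^2$. Cyclic relabeling $(x,y,z) \mapsto (y,z,x)$ extends both conclusions to those involving $[z,x,y]$. For the subtler equality $[x^2,y,z] = [z,x^2,y]$, I expand $[z,x^2,y]$ via Lemma~\lemref{78} in the middle slot and Lemma~\lemref{77} as $[z,x\inv,y]\inv [z,x,y]$, rewrite $[z,x\inv,y] = [x,y\inv,z]$ via Corollary~\corref{75}, substitute Proposition~\prpref{710} to replace $[x,y\inv,z]$ by $[y,z,x][x,y,z][z,x,y]\inv$, and simplify in abelian $A(Q)$ using the first equality of (i) to arrive at $[x,y,z][y,z,x]\inv = [x^2,y,z]$.

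For part (ii), set $a = [x^2,y,z]$. I compute $a^x$ by evaluating $[x^3,y,z]$ two ways via Lemma~\lemref{78}: the expansion $x \cdot x^2$ yields $[x,y,z]^{x^2} \cdot a = [x,y,z] \cdot a$ using Lemma~\lemref{76}, while $x^2 \cdot x$ yields $a^x [x,y,z]$; abelianness of $A(Q)$ gives $a^x = a$. For $a^z$, Lemma~\lemref{77} yields $a^z = [z,x^2,y]\inv$, which by part (i) and $a^2 = 1$ (Proposition~\prpref{711}) equals $a$. For $a^y$, apply Proposition~\prpref{710} to $(x^2,y,z)$ to get $[x^2,y\inv,z] = [y,z,x^2] \cdot a \cdot [z,x^2,y]\inv = a$ using part (i) and $a^2 = 1$; then Lemma~\lemref{77} gives $a^y = [x^2,y\inv,z]\inv = a$. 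Part (iii) now follows from (ii) and Lemma~\lemref{78}: $[x^2,y^2,z] = a^y \cdot a = a^2 = 1$ and $[x^2,y,z^2] = a^z \cdot a = 1$. For $[x,y^2,z^2]$, set $c = [x,y^2,z]$; relabeling $(x,y,z) \mapsto (y,z,x)$ in parts (i) and (ii) gives $c = [y^2,z,x]$ and $c^x = c^y = c^z = c$, so $[x,y^2,z^2] = c^z \cdot c = c^2 = 1$ by Proposition~\prpref{711}.

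For part (iv), since $Q/N$ is abelian (Theorem~\thmref{abelian-factor}), $yz^2 \equiv z^2 y \pmod{N}$, so Lemma~\lemref{assoc-basics} yields $[x,y,yz^2] = [x,y,z^2 y]$. Expanding both sides via Lemma~\lemref{78} in the third slot gives $[x,y,y]^{z^2} [x,y,z^2] = [x,y,z^2]^y [x,y,y]$. A direct analogue of (ii) applied to $d = [x,y,z^2]$ shows $d^y = d$: indeed $d^y \cdot d = [x,y^2,z^2] = 1$ by (iii), and $d^2 = 1$ follows from Corollary~\corref{75} and Lemma~\lemref{assoc-basics} combined with Proposition~\prpref{711} (since $[x,y,z^2] = [z^2,x,y]$ modulo nuclear cosets). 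Cancelling $[x,y,z^2]$ in abelian $A(Q)$ yields $[x,y,y]^{z^2} = [x,y,y]$, and the analogous argument with $x$ and $y$ swapped gives $[y,x,y]^{z^2} = [y,x,y]$. The main obstacle is that the second equality of (i) requires a nontrivial combination of Proposition~\prpref{710} with the first equality of (i); but once that is in hand, parts (ii)--(iv) unfold by systematic application of the cocycle and action identities together with Proposition~\prpref{711}.
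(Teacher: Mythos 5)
Your argument for parts (i)--(iii) and for the first identity of (iv) is correct and uses essentially the same toolbox as the paper (the cocycle identities of Lemma \lemref{78}, the action formulas of Lemma \lemref{77}, Propositions \prpref{710} and \prpref{711}, and commutativity of $Q/N$), differing only in bookkeeping: you establish (i) in full before (ii) where the paper interleaves them, you get $a^x=a$ by computing $[x^3,y,z]$ two ways rather than by Lemma \lemref{76} directly, and you expand $[x,y,yz^2]$ in the third slot where the paper expands $[x,yz^2,y]$ in the middle slot. All of those computations check out.

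The one step that does not work as written is the final sentence. Swapping $x$ and $y$ throughout your argument for the first identity of (iv) produces the conclusion $[y,x,x]^{z^2}=[y,x,x]$, which is merely a relabeling of $[x,y,y]^{z^2}=[x,y,y]$ (note $[y,x,x]=[x,x,y]$ by Lemma \lemref{79}, which is not the associator $[y,x,y]$ in general, cf.\ Lemma \lemref{82}(iii)); so it says nothing about $[y,x,y]$. What you need instead is the analogous computation on $[y,x,yz^2]=[y,x,z^2y]$: expanding both sides in the third slot gives $[y,x,y]^{z^2}[y,x,z^2]=[y,x,z^2]^y[y,x,y]$, and $[y,x,z^2]^y=[y,x,z^2]$ follows exactly as in your argument for $d$, since $[y,x,z^2]^y[y,x,z^2]=[y^2,x,z^2]=1$ by part (iii) and $[y,x,z^2]^2=1$ by Proposition \prpref{711}. (The paper instead expands $[yz^2,x,y]=[z^2y,x,y]$ in the first slot and invokes (ii).) With that substitution the proof is complete.
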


\begin{proof}
From Proposition \prpref{711} we obtain
\[
1 = [x^2,y,z]^2 = ([x,y,z]^x[x,y,z])^2 = [y,z,x]^{-2}[x,y,z]^2\,.
\]
Set now $a = [x^2,y,z]$. Then $a^x = ([x,y,z]^x [x,y,z])^x$ equals $a$
by Lemma \lemref{76}, and $a^y = ([y,z,x]\inv)^y
[x,y,z]^y = [z,x,y]([z,x,y][y,z,x]\inv [x,y,z]\inv)$, by
Proposition \prpref{710} and Corollary \corref{75}. Since we have already
proved the first part of (i), we can replace $[z,x,y]^2$ with
$[x,y,z]^2$, and so $a^y = [x,y,z]^x[x,y,z] = a$. Similarly we get
\begin{align*}
a^z &= ([y,z,x]\inv)^z [x,y,z]^z= [z,x,y] [y,z,x] [x,y,z]\inv [z,x,y]\inv \\
&= [x,y,z][y,z,x]\inv = [x^2,y,z]\,,
\end{align*}
where we have also used the fact
that $a^z$ is of exponent two, by Proposition \prpref{711}.
Furthermore, $[x^2,y,z] = [x^2,y,z]^{x^2}$ by Lemma \lemref{76}, and so
\[
[x^2,y,z] = [y,z,x^2]\inv = [y,z,x^2] = [y,z,x^2]^y =
[z,x^2,y]\inv = [z,x^2,y]\,.
\]
This finishes the proof of (i) and (ii).

Now, $[x^2,y^2,z] = [x^2,y,z]^y[x^2,y,z] = [x^2,y,z]^2 = 1$, which
yields (iii). To get (iv), we compute:
\[
[x,z^2,y][x,y,y] =
[x,z^2,y]^y[x,y,y] = [x,yz^2,y] = [x,z^2,y][x,y,y]^{z^2}\,,
\]
and
\[
[y,x,y][z^2,x,y] = [z^2,x,y]^y[y,x,y] = [yz^2,x,y] =
[y,x,y]^{z^2}[z^2,x,y]\,.
\]
\end{proof}

\begin{lem}
\lemlabel{82}
Let $Q$ be a Buchsteiner loop. Then for all $x,y\in Q$,
\begin{enumerate}
\item $[x,x,y] = [y,x,x]$, $[x,x,y]^x = [x,y,x]\inv$, $[x,y,x]^x = [x,x,y]\inv$,
$[x,x,y]^y = [x,x,y]\inv$ and $[x,y,x]^y = [x,y,x]\inv$,
\item $[x^2,y,x] = [x^2,x,y] = [x,x^2,y] = [x,y,x^2] = [y,x,x^2] = [y,x^2,x]$,
\item $[x,y,x] = [x,x,y][x^2,x,y]$\,
\item $[x,y^2,x] = [x,x,y^2] = [y^2,x,x] = 1 = [x^2,x,y]^2$, and
\item $[x,x,x]^y = [x,x,x] [x,x,y]\inv [x,y,x]\inv$.
\end{enumerate}
\end{lem}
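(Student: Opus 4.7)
The plan is to prove (i)--(v) in order, each part feeding into the next. For (i), the identity $[x,x,y]=[y,x,x]$ will come straight from Lemma~\lemref{79} after interchanging $x$ and $y$. The exponent identities $[x,x,y]^x=[x,y,x]\inv$, $[x,y,x]^x=[x,x,y]\inv$, and $[x,x,y]^y=[x,x,y]\inv$ then reduce to routine substitutions into Lemma~\lemref{77} (combined with the identity just proved). The remaining one, $[x,y,x]^y=[x,y,x]\inv$, is more subtle: Lemma~\lemref{77} only gives $[x,y,x]^y=[x,y\inv,x]\inv$, so I would first establish the auxiliary fact $[x,y,x]=[x,y\inv,x]$ by setting $z=x$ in Proposition~\prpref{710}, which yields $[y,x,x][x,y,x]=[x,x,y][x,y\inv,x]$, and then cancelling using $[x,x,y]=[y,x,x]$.

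For (ii), three of the six equal expressions come directly from Lemma~\lemref{81}(i) applied to $[x^2,y,x]$, and the other three similarly to $[x^2,x,y]$, so it remains to link these two triples. The crux is to show $[x^2,y,x]=[x^2,x,y]$. I would expand each via Lemma~\lemref{78} in the first coordinate and substitute the exponent identities of (i) to obtain
\[
[x^2,y,x]=[x,x,y]\inv[x,y,x] \quad\text{and}\quad [x^2,x,y]=[x,y,x]\inv[x,x,y].
\]
These two elements are mutually inverse in the abelian group $A(Q)$ (Lemma~\lemref{QA-group}(i)), yet each has exponent $2$ by Proposition~\prpref{711}; hence they coincide. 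For (iii), I would then multiply the second formula on the left by $[x,x,y]$ and invoke $[x,x,y]^2=[x,y,x]^2$ from Lemma~\lemref{81}(i), using commutativity of $A(Q)$.

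Part (iv) is short: Lemma~\lemref{78} yields $[x,x,y^2]=[x,x,y]^y[x,x,y]$ and $[x,y^2,x]=[x,y,x]^y[x,y,x]$, and parts of (i) collapse each product to the identity. The identity $[y^2,x,x]=[x,x,y^2]$ is a further application of the first identity of (i), and $[x^2,x,y]^2=1$ is immediate from Proposition~\prpref{711}. For (v), I plan to exploit that $Q/N$ is abelian (Theorem~\thmref{abelian-factor}) so that $[x,x,xy]=[x,x,yx]$; expanding both via Lemma~\lemref{78} produces $[x,x,x]^y[x,x,y]=[x,x,y]^x[x,x,x]$, and rearranging in the abelian group $A(Q)$ using $[x,x,y]^x=[x,y,x]\inv$ (from (i)) delivers the stated formula.

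The main obstacle is the linkage $[x^2,y,x]=[x^2,x,y]$ in (ii): one must recognize that the two expressions produced by Lemma~\lemref{78} are mutually inverse in $A(Q)$ and then extract equality from the exponent-$2$ fact of Proposition~\prpref{711}. Everything else is either a clean substitution into the framework of \S\secref{associator} or a short computation in the abelian group $A(Q)$ once (i) is in hand.
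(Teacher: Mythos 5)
Your proof is correct: every step is a valid application of the associator calculus of \S\secref{associator} together with Lemma \lemref{81}, and the order (i) through (v) introduces no circularity. The overall strategy is the same as the paper's, but you resolve the two non-routine points by different means. For $[x,y,x]^y=[x,y,x]\inv$, the paper reduces $[x,y\inv,x]$ to $[x,y^3,x]=[x,y^2,x]^y[x,y,x]$ and kills the first factor via $[x,y^2,x]=1$ (which it must therefore establish, via Lemma \lemref{81}(i) and the opening computation $[y^2,x,x]=1$, before finishing (i)); your derivation of $[x,y,x]=[x,y\inv,x]$ by setting $z=x$ in Proposition \prpref{710} and cancelling $[y,x,x]=[x,x,y]$ is cleaner and sidesteps that ordering issue. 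For the linkage in (ii), the paper notes that the first-coordinate expansion of $[x^2,x,y]$ and the middle-coordinate expansion of $[x,x^2,y]$ are literally the same word $[x,x,y]^x[x,x,y]$, whereas you connect $[x^2,y,x]$ with $[x^2,x,y]$ by observing that their Lemma \lemref{78} expansions, $[x,x,y]\inv[x,y,x]$ and $[x,y,x]\inv[x,x,y]$, are mutually inverse in the abelian group $A(Q)$ and each has exponent $2$ by Proposition \prpref{711}; both arguments are sound, the paper's being shorter and yours making the role of Proposition \prpref{711} explicit. Parts (iii)--(v) match the paper's computations essentially verbatim.
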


\begin{proof}
We have $[y,x,x] = [x,x,y]$ by Lemma \lemref{79}, and hence we obtain
$[y^2,x,x] = [y,x,x]^y[y,x,x] = [x,x,y]\inv [y,x,x] = 1$.
Now, $[x,x^2,y] = [x,x,y]^x[x,x,y] = [x^2,x,y]$.
That proves (ii) since we can use Lemma \lemref{81}(i).
Next, (i) follows
from Lemma \lemref{77} as $[x,y\inv,x] = [x,y^3,x] = [x,y^2,x]^x[x,y,x] = [x,y,x]$.
We have $[x^2,x,y]^2 = 1$ by Proposition \prpref{711}, and that makes (iii)
and (iv) clear. To prove (v) consider the equalities
\[
[x,x,x][x,y,x]\inv = [x,x,x][x,x,y]^x = [x,x,xy] = [x,x,x]^y[x,x,y]\,.
\]
\end{proof}

From here on, $B$ will be a multiplicative abelian group and $A$ an
additive abelian group, where $B$ acts on $A$ multiplicatively. (Thus
$b(a+a') = ba + ba'$ and $(b'b)a = b'(ba)$ for all $a,a' \in A$ and
$b,b' \in B$.)

We shall construct a mapping $f:B^3 \to A$ such that
\begin{gather}
\eqnlabel{efcond1}
xf(x,y,z) = -f(y,z,x) \text{ for all }x,y,z \in B, \text{ and}\\
\eqnlabel{efcond2}
f(uv,y,z) = vf(u,y,z) + f(v,y,z) \text{ for all } u,v,y,z \in B.
\end{gather}

In this section we shall give a concrete example of such a mapping $f$.
In \S\secref{extend} we shall then extend it to a loop $Q$ in such a way that
$B \cong Q/N$, $A \cong A(Q) = N(Q)$ and that $[x,y,z]$ coincides
with $f(x,y,z)$ (when appropriate identifications are done).
This will give us a Buchsteiner loop, by Proposition \prpref{73}.
If we manage to find $x,y,z \in Q$ with $f(x,y,z) \ne f(y,x,z)$, then
our loop will not be conjugacy closed (we explain this in detail
in \S\secref{extend}).

Set $B = \langle e_1,e_2;$ $e_1^4 = e_2^4 = 1\rangle$ and define
$A$ as a vector space over the two-element field  with basis $c_{ijk}$,
where $k \ge i$ and $i,j,k \in \{1,2\}$. Thus $|A| = 64$.

We shall identify $c_{ijk}$ with $c_{kji}$, which will allow us to
deal with vectors $c_{ijk}$ for all $i,j,k \in \{1,2,3\}$.

To define the action of $B$ on $A$ consider $i,j \in \{1,2\}$, $i \ne j$,
and set
\begin{equation}
\eqnlabel{eact}
\begin{gathered}
e_ic_{iij} = c_{iji},\ \ e_i c_{iji} = c_{iij}, \ \ e_j c_{iij} = c_{iij},
\ \ e_j c_{iji} = e_{iji}, \\
e_i c_{iii} = c_{iii}, \text{ \ and \ } e_jc_{iii} = c_{iii} + c_{iij} + c_{iji}.
\end{gathered}
\end{equation}

The actions of $e_1$ and $e_2$ clearly commute. To see that
we have really obtained an action of $B$ on $A$, it hence
suffices to verify that $e_h^4$
fixes each vector for both $h \in \{1,2\}$. In fact, we already
have $e_h^2 c_{ijk} = c_{ijk}$
for all $i,j,k \in \{1,2\}$. This is almost clear from
the definitions, with perhaps one case requiring our attention:
$e_j^2 c_{iii} = c_{iii} + c_{iij}
+ c_{iji} + c_{iij} + c_{iji} = c_{iii}$.

We have verified that the action is defined correctly.
Furthermore, we can state:

\begin{lem}
\lemlabel{83}
If $b \in B$ and $u \in A$, then $b^2 u = u$.
\end{lem}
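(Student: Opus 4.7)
The plan is to reduce the claim to the already-verified fact that $e_h^2$ fixes each basis vector $c_{ijk}$. First I would observe that $B = \langle e_1, e_2\rangle$ is abelian, so every $b \in B$ can be written as $e_1^i e_2^j$ for some integers $i,j$, and hence $b^2 = (e_1^2)^i(e_2^2)^j$. Therefore it suffices to show that $e_h^2$ acts as the identity on $A$ for each $h \in \{1,2\}$.

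Second, since $A$ is generated additively by the basis vectors $c_{ijk}$ (with the identification $c_{ijk} = c_{kji}$) and the $B$-action distributes over addition, this reduces further to verifying $e_h^2 c_{ijk} = c_{ijk}$ on each basis vector. But this is precisely the computation carried out in the paragraph immediately preceding the lemma: from \eqnref{eact} one sees directly that $e_i$ is an involution on $\{c_{iij}, c_{iji}\}$ and fixes $c_{iii}$, while the only slightly non-routine case is $e_j^2 c_{iii}$ for $i \neq j$, which evaluates as $e_j(c_{iii} + c_{iij} + c_{iji}) = c_{iii} + c_{iij} + c_{iji} + c_{iij} + c_{iji} = c_{iii}$, using that $A$ has characteristic $2$. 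There is essentially no obstacle here; the lemma simply packages a verification already completed in the setup of the action.
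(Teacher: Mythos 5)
Your proof is correct and follows exactly the route the paper takes: the paper states Lemma \lemref{83} as an immediate consequence of the preceding verification that $e_h^2 c_{ijk} = c_{ijk}$ for all basis vectors (including the one nontrivial case $e_j^2 c_{iii}$), combined with commutativity of the two generator actions and linearity. Nothing further is needed.
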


Denote by $B_2$ the vector space with basis $e_1$ and $e_2$
over the two-element field. Denote by $\pi$ the projection
$B \to B_2 ; e_i \mapsto e_i$.
By Lemma \lemref{83}, the action of $B$ on $A$ induces an
action of $B_2$ on $A$, .

\begin{lem}
\lemlabel{84}
The elements of $A$ centralized by $B$ form a subspace
generated by $c_{121} + c_{112}$ and $c_{212} + c_{221}$.
\end{lem}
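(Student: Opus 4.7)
The plan is to reduce the problem to a direct linear-algebra computation over $\mathbb{F}_2$. By Lemma \lemref{83}, every square $b^2$ with $b \in B$ acts trivially, so the action of $B$ on $A$ factors through the quotient $B_2 = B/B^2$, a two-dimensional $\mathbb{F}_2$-space with basis $\pi(e_1), \pi(e_2)$. Consequently, an element $u \in A$ is centralized by all of $B$ if and only if $e_1 u = u$ and $e_2 u = u$. The problem thus becomes: determine the simultaneous fixed subspace of the two involutions $e_1$ and $e_2$ on the six-dimensional space $A$.

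I would then write a general element in the basis
$\{c_{111},\; c_{112},\; c_{121},\; c_{221},\; c_{212},\; c_{222}\}$
(using the identification $c_{ijk} = c_{kji}$ to represent $c_{122}$ and $c_{211}$), say
$u = \alpha_1 c_{111} + \alpha_2 c_{112} + \alpha_3 c_{121} + \alpha_4 c_{221} + \alpha_5 c_{212} + \alpha_6 c_{222}$,
and apply \eqnref{eact} term-by-term. Recording the result of $e_1 u$ and demanding $e_1 u = u$ yields the conditions $\alpha_2 = \alpha_3$ and $\alpha_6 = 0$ (the latter coming from the ``extra'' terms $c_{221}, c_{212}$ produced by $e_1 c_{222}$). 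Similarly, $e_2 u = u$ forces $\alpha_4 = \alpha_5$ and $\alpha_1 = 0$, these constraints arising from $e_2 c_{111} = c_{111} + c_{112} + c_{121}$.

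Combining the four conditions $\alpha_1 = \alpha_6 = 0$, $\alpha_2 = \alpha_3$, $\alpha_4 = \alpha_5$ gives exactly a two-dimensional fixed subspace, spanned by $c_{112} + c_{121}$ and $c_{221} + c_{212}$, which is the asserted statement.

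There is no real obstacle here beyond bookkeeping; the only subtle point is the identification $c_{ijk} = c_{kji}$, which must be invoked to put the image vectors back into the chosen basis before comparing coefficients. Once the action of $e_1$ and $e_2$ is tabulated on each of the six basis vectors, the fixed subspace is read off immediately.
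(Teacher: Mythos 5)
Your proof is correct and takes essentially the same approach as the paper's: a direct verification from \eqnref{eact} (equivalently, from the induced action of $B_2$ furnished by Lemma \lemref{83}) of which vectors are fixed by both $e_1$ and $e_2$. If anything, yours is the more explicit version: the paper only spells out why a centralized vector must have equal coefficients at $c_{iij}$ and $c_{iji}$, whereas you also record the conditions $\alpha_1=\alpha_6=0$ that force the $c_{111}$- and $c_{222}$-components to vanish.
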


\begin{proof}
Consider $i,j \in \{1,2\}$, $i \ne j$. Clearly $e_i(c_{iji}+c_{iij})
= e_j(c_{iji} + c_{iij}) = c_{iji} + c_{iij}$. Suppose, on the other hand,
that $u \in A$ is centralized by the action. We argue by contradiction,
and assume that in the standard basis the vector $u$ has
different coefficients at $c_{iij}$ and $c_{iji}$. Since $e_ic_{iji} = c_{iij}$,
there must exist a base vector $c \notin \{c_{iji},c_{iij}\}$ such that
$e_ic$ has different coefficients at $c_{iji}$ and $c_{iij}$. However,
no such vector exists.
\end{proof}

Define $C:B_2^3 \to A$ in such a way that $C(a,b,c) = C(c,b,a)$ for all $a,b,c \in B_2$,
$C(a,b,c) = 0$ if one of $a,b,c \in B_2$ is equal to $0$,
$C(e_i,e_j,e_k) = c_{ijk}$ for all $i,j,k \in \{1,2,3\}$, and
the following holds when $i,j\in  \{1,2\}$ and $i\ne j$:
\begin{equation}
\eqnlabel{eC}
\begin{gathered}
C(e_i,e_i,e_i+e_j) = c_{iii}+c_{iji} \textand C(e_i,e_i+e_j,e_i) = c_{iii} + c_{iij},\\
C(e_i,e_i+e_j,e_j) = c_{iij}+c_{jji} \textand C(e_i,e_j,e_i+e_j) = c_{ijj} + c_{iji},\\
C(e_i,e_i+e_j,e_i+e_j) = c_{iii} + c_{iij} +c_{jji} + c_{iji},\\
C(e_i+e_j,e_j,e_i+e_j) = c_{jjj} + c_{iji}, \text{ and}\\
C(e_i+e_j,e_i+e_j,e_i+e_j) = c_{iii} + c_{jjj} + c_{iij} + c_{jji}.
\end{gathered}
\end{equation}

We shall usually assume that $x \in B$ is expressed in the
form $\prod e_i^{\alpha_i}e_i^{2\alpha'_i}$,
$i \in \{1,2\}$, where  $\alpha_i,\alpha_i'\in
\{0,1\}$. Let us have also $y = \prod e_i^{\beta_i}e_i^{2\beta'_i}$
and $z = \prod e_i^{\gamma_i}e_i^{2\gamma'_i}$.

For $h \in \{1,2\}$ define a mapping $s_h: B^3 \to \{0,1\}$ so that
\begin{equation}
\eqnlabel{esdef}
s_h(x,y,z) = \alpha'_h (\beta_2\gamma_1 + \beta_1\gamma_2) +
\beta_h'(\gamma_2\alpha_1 + \gamma_1\alpha_2) +
\gamma_h' (\alpha_2\beta_1 + \alpha_1\beta_2),
\end{equation}
and define $f: B^3 \to A$ by
\begin{multline}
\eqnlabel{efdef}
f(x,y,z) = C(\pi(x),\pi(y),\pi(z)) \\ + s_1(x,y,z)(c_{112} +c_{121})
+s_2(x,y,z)(c_{122} + c_{212}).
\end{multline}

From the definition of $s_h$ we see immediately that
\begin{equation}
\eqnlabel{esper}
s_h(x_1,x_2,x_3) = s_h(x_{\sigma(1)}, x_{\sigma(2)}, x_{\sigma(3)})
\text{ for all permutations } \sigma \in S_3,
\end{equation}
for every $x_1,x_2,x_3 \in Q$ and $i \in \{1,2\}$.

\begin{lem}
\lemlabel{85}
Consider $x,y,z,u \in B$ and $h \in \{1,2\}$. Then
\begin{enumerate}
\item $s_h(xu^2,y,z) = s_h(x,y,z) + s_h(u^2,y,z)$,
\item $s_h(u^2,xy,z) = s_h(u^2,x,z) + s_h(u^2,y,z)$, and
\item $s_h(x^2,y^2,z) = 0$.
\end{enumerate}
\end{lem}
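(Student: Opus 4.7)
The plan is to prove all three parts by direct substitution into the definition \eqnref{esdef}, using one simple bookkeeping fact: squaring kills the ``linear part'' of an element of $B$. Specifically, if $u = \prod_i e_i^{\mu_i}e_i^{2\mu'_i}$, then $u^2 = \prod_i e_i^{2\mu_i}$ because $e_i^4 = 1$, so in the standard form $u^2 = \prod_i e_i^{\tilde\mu_i}e_i^{2\tilde\mu'_i}$ with $\tilde\mu_i,\tilde\mu'_i \in \{0,1\}$, the linear coordinates $\tilde\mu_i$ vanish and the quadratic coordinates are $\tilde\mu'_i = \mu_i$. This is the only ``lemma'' needed.

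For (i), since the linear coordinates of $u^2$ are zero, no carry is produced when multiplying by $x$, so $xu^2$ has standard coordinates $\alpha_i$ (linear) and $\alpha'_i + \mu_i \bmod 2$ (quadratic). Substituting into \eqnref{esdef}, only the first summand changes, being augmented by $\mu_h(\beta_2\gamma_1 + \beta_1\gamma_2)$; and substituting $u^2$ into the first slot of \eqnref{esdef} kills the second and third summands (they contain the linear coordinates of $u^2$) leaving exactly $s_h(u^2,y,z) = \mu_h(\beta_2\gamma_1 + \beta_1\gamma_2)$. The two sides of (i) therefore agree.

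For (ii), write $xy$ in standard form with linear coordinates $\delta_i \equiv \alpha_i + \beta_i \pmod 2$ and quadratic coordinates $\delta'_i \equiv \alpha'_i + \beta'_i + \alpha_i\beta_i \pmod 2$. The same vanishing argument shows $s_h(u^2,xy,z) = \mu_h(\delta_2\gamma_1 + \delta_1\gamma_2)$, and the carry term $\alpha_i\beta_i$ sitting in $\delta'_i$ never contributes because $\delta'_h$ is multiplied by the vanishing linear coordinates of $u^2$. Expanding $\delta_i \equiv \alpha_i + \beta_i \pmod 2$ and splitting over $\mathbb{F}_2$ yields $\mu_h(\alpha_2\gamma_1 + \alpha_1\gamma_2) + \mu_h(\beta_2\gamma_1 + \beta_1\gamma_2) = s_h(u^2,x,z) + s_h(u^2,y,z)$. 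Part (iii) is immediate: every summand in \eqnref{esdef} is a product of the quadratic coordinate of one argument with the linear coordinates of the other two, and among $x^2$ and $y^2$ the linear coordinates are all zero, so each summand vanishes.

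There is no real obstacle here; the only thing that requires care is the carry term appearing in the quadratic coordinate of $xy$ in part (ii), which is dispatched by the observation that $s_h(u^2,\cdot,z)$ is insensitive to the quadratic coordinate of its second argument.
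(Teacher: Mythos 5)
Your proof is correct and follows essentially the same route as the paper: write $u^2$ in standard form with vanishing linear coordinates, substitute into \eqnref{esdef}, and observe which summands survive (the paper does this explicitly for (i) and dismisses (ii) and (iii) as clear, which you spell out, including the harmless carry term in the quadratic coordinate of $xy$).
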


\begin{proof}
Let us have $u^2 = \prod e_i^{2\delta_i}$. Then $s_h(xu_2,y,z) =
(\alpha'h + \delta_h)(\beta_2\gamma_1 + \beta_1\gamma_2) + \beta_h'
(\gamma_2\alpha_1 + \gamma_1\alpha_2) + \gamma'_h(\alpha_2\beta_1 +
\alpha_1\beta_2)$ equals $s_h(u^2,y,z) + s_h(x,y,z)$ since
$s_h(u^2,y,z) = \delta_h(\beta_2\gamma_1 + \beta_1\gamma_2)$. The
rest is clear.
\end{proof}

As an immediate consequence we obtain:

\begin{cor}
\corlabel{86}
For all $x,y,z,u\in B$,
\begin{enumerate}
\item $f(xu^2,y,z) = f(x,y,z) + f(u^2,y,z)$, $f(x,yu^2,z) =
f(x,y,z) + f(x,u^2,z)$ and $f(x,y,zu^2) = f(x,y,z) + f(x,y,u^2)$,
\item the vector $f(u^2,y,z) = f(u^2,z,y)= f(y,u^2,z)= f(z,u^2,y) =
f(y,z,u^2) = f(z,y,u^2)$ is centralized by the action of $B$ on $A$;
\item $f(u^2,xy,z) = f(u^2,x,z) + f(u^2,y,z)$, and
\item $f(x^2,y^2,z) = 0$.
\end{enumerate}
\end{cor}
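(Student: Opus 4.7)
The plan is to split $f$ according to its definition \eqnref{efdef} into three summands — the $C$-summand $C(\pi(x),\pi(y),\pi(z))$ and the two $s$-summands $s_h(x,y,z)\cdot v_h$ (with $v_1 = c_{112}+c_{121}$, $v_2 = c_{122}+c_{212}$) — and to verify each of the four assertions on each summand separately. The underlying fact that does most of the work is that $\pi(u^2) = 0$ in $B_2$ for every $u \in B$, so the $C$-summand vanishes whenever any one of the three arguments is a square, because $C(a,b,c)=0$ as soon as one of $a,b,c$ is $0$. The two vectors $v_1$ and $v_2$ both lie in the $B$-centralized subspace identified in Lemma \lemref{84}, the second of them after using the identification $c_{ijk}=c_{kji}$ to rewrite $v_2$ as $c_{221}+c_{212}$.

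For (i), I would treat the $C$-summand uniformly using $\pi(xu^2)=\pi(x)$, $\pi(yu^2)=\pi(y)$, $\pi(zu^2)=\pi(z)$ together with $C(\pi(u^2),-,-)=C(-,\pi(u^2),-)=C(-,-,\pi(u^2))=0$. For the $s$-summands, Lemma \lemref{85}(i) handles the first-position statement directly, and the second- and third-position statements reduce to the first by the permutation invariance \eqnref{esper} of $s_h$.

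For (ii), once the $C$-summand is eliminated, $f(u^2,y,z)$ becomes a linear combination of $v_1$ and $v_2$ and is therefore centralized by $B$. The six claimed equalities reduce to the statement that $s_h(u^2,y,z)$ is invariant under arbitrary permutations of its arguments, which is exactly \eqnref{esper}. For (iii), the $C$-summand again vanishes and the claim reduces verbatim to Lemma \lemref{85}(ii). For (iv), the $C$-summand vanishes since $\pi(x^2)=0$, and Lemma \lemref{85}(iii) gives $s_h(x^2,y^2,z)=0$ for each $h\in\{1,2\}$.

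There is no essential obstacle here: the entire corollary is a bookkeeping consequence of Lemma \lemref{85}, the vanishing of $C$ at $0$, and the characterization of the $B$-centralized subspace in Lemma \lemref{84}. The one point that deserves a moment of care is the identification $c_{ijk}=c_{kji}$ in (ii), which is needed to recognize $v_2 = c_{122}+c_{212}$ as a centralized vector in the form given by Lemma \lemref{84}.
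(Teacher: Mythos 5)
Your proposal is correct and fills in exactly the details the paper leaves implicit: the paper states Corollary \corref{86} as an ``immediate consequence'' of Lemma \lemref{85}, and your decomposition of $f$ into the $C$-summand (killed by $\pi(u^2)=0$) and the $s_h$-summands (handled by Lemma \lemref{85} and the symmetry \eqnref{esper}, with centralization from Lemma \lemref{84}) is the intended argument. No gaps.
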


\begin{prop}
\prplabel{87}
$f(x^2,y,z) = f(x,y,z) + f(y,z,x)$ for all $x,y,z \in B$.
\end{prop}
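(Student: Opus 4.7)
The plan is to unpack the definition of $f$ in \eqnref{efdef} and verify the identity by direct expansion, exploiting heavily the fact that squaring in $B$ lands in the kernel of $\pi$. Writing $x = e_1^{\alpha_1+2\alpha_1'}e_2^{\alpha_2+2\alpha_2'}$, we have $x^2 = e_1^{2\alpha_1}e_2^{2\alpha_2}$, so $\pi(x^2)=0$ and hence $C(\pi(x^2),\pi(y),\pi(z)) = 0$ by the vanishing convention on $C$. This collapses $f(x^2,y,z)$ to only its $s_h$-corrections. A direct application of \eqnref{esdef} to $x^2$ --- in which the role of $\alpha'_h$ is played by $\alpha_h$ while the role of $\alpha_h$ is played by $0$ --- yields the clean formula
\[
s_h(x^2,y,z) = \alpha_h(\beta_2\gamma_1+\beta_1\gamma_2),
\]
so that
\[
f(x^2,y,z) = (\beta_2\gamma_1+\beta_1\gamma_2)\bigl(\alpha_1(c_{112}+c_{121})+\alpha_2(c_{122}+c_{212})\bigr).
\]

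Next, I would attack the right-hand side $f(x,y,z)+f(y,z,x)$. The key observation is \eqnref{esper}: the functions $s_h$ are invariant under arbitrary permutations of their three arguments, so $s_h(x,y,z)=s_h(y,z,x)$, and since $A$ has characteristic $2$ the correction terms in the two $f$-values cancel. What remains is a purely $B_2$-level identity,
\[
C(\pi(x),\pi(y),\pi(z)) + C(\pi(y),\pi(z),\pi(x)) = (\beta_2\gamma_1+\beta_1\gamma_2)\bigl(\alpha_1(c_{112}+c_{121})+\alpha_2(c_{122}+c_{212})\bigr),
\]
depending only on $a=\pi(x)$, $b=\pi(y)$, $c=\pi(z)\in B_2$ (the variables $\alpha_h,\beta_h,\gamma_h$ being their coordinates).

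Finally, I would check this reduced identity case by case. When any of $a,b,c$ is $0$, both sides vanish --- the left by the vanishing convention on $C$, the right by inspection of the relevant $\alpha$- or $\beta\gamma$-factor. The remaining cases have $a,b,c\in\{e_1,e_2,e_1+e_2\}$, giving at most $27$ subcases, reduced by the symmetry $C(a,b,c)=C(c,b,a)$ and the evident symmetry exchanging the indices $1$ and $2$. In each case one expands $C$ via \eqnref{eC} (applying the identification $c_{ijk}=c_{kji}$ where necessary) and compares coefficients with the explicit right-hand side. The main obstacle here is purely bookkeeping: the formulas in \eqnref{eC} are not linear in their entries (as one sees already from $C(e_1+e_2,e_1,e_1)\neq C(e_1,e_1,e_1)+C(e_2,e_1,e_1)$), so there is no clever shortcut collapsing the case analysis, and the proof proceeds by mechanical verification.
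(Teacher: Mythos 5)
Your proposal is correct and follows essentially the same route as the paper: compute $f(x^2,y,z)$ directly from \eqnref{esdef} (the $C$-term vanishing since $\pi(x^2)=0$), cancel the $s_h$-corrections on the right via the permutation invariance \eqnref{esper} and characteristic $2$, and then verify the residual identity $C(a,b,c)+C(b,c,a)=(\beta_1\gamma_2+\beta_2\gamma_1)(\alpha_1(c_{112}+c_{121})+\alpha_2(c_{122}+c_{212}))$ by a finite case check over $B_2^3$. The paper organizes that check by fixing $a=\pi(x)$ and tabulating the six ordered pairs $(b,c)$ with $b\neq c$ both nonzero (Tables \ref{t1}--\ref{t3}), which is the same mechanical verification you describe.
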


\begin{proof}
The left hand side of the equality evaluates to $\alpha_1(\beta_2\gamma_1
+ \beta_1\gamma_2)(c_{112} + c_{121}) + \alpha_2(\beta_1\gamma_2 +
\beta_2\gamma_1)(c_{221} + c_{212})$, which is equal to
\begin{equation}
\eqnlabel{eL}
(\beta_1\gamma_2 + \beta_2\gamma_1)(\alpha_1c_{112} + \alpha_1c_{121}
+ \alpha_2c_{221} + \alpha_2c_{212}).
\end{equation}
The right hand side yields the sum
\begin{multline}
\eqnlabel{eR}
C(\alpha_1e_1+\alpha_2e_2,\beta_1e_1+\beta_2e_2,\gamma_1e_1+\gamma_2e_2) + \\
C(\beta_1e_1+\beta_2e_2,\gamma_1e_1+\gamma_2e_2,\alpha_1e_1+\alpha_2e_2),
\end{multline}
as $s_h(x,y,z) + s_h(y,z,x) = 0$ for both $h \in \{1,2\}$.

To compare \eqnref{eL} and \eqnref{eR} we thus need to look only
at $(\alpha_1,\alpha_2)$, $(\beta_1,\beta_2)$ and $(\gamma_1,
\gamma_2)$. The equality is clear if one of these pairs is
equal to $(0,0)$. We can thus assume it is not. Furthermore,
note that then $\beta_1\gamma_2 + \beta_2\gamma_1$ is equal to
zero if and only if $(\beta_1,\beta_2) = (\gamma_1,\gamma_2)$.
In such a case \eqnref{eR} is a sum of two same values, and
as such it is equal to zero as well.
Hence also $(\beta_1,\beta_2) \ne (\gamma_1,\gamma_2)$
can be assumed.

There are three cases to be considered, and these are $\pi(x) =
e_1$, $\pi(x) = e_2$ and $\pi(x) = e_1 + e_2$. Under the assumed
conditions the left hand side \eqnref{eL} is in these cases equal to
$c_{112} + c_{121}$, $c_{221} + c_{212}$ and $c_{112} + c_{121} + c_{221} + c_{212}$,
respectively.

To get the right hand side consider Tables \ref{t1}, \ref{t2} and \ref{t3}.
Each of them tabulates, for the given $\pi(x)$, the values
of both factors that form the sum \eqnref{eR}, running through
the six possible combinations of $(\beta_1,\beta_2)$ and
$(\gamma_1,\gamma_2)$. It is easy to verify that the sum
of the rightmost two columns is always equal to the value given
by the left hand side.
\end{proof}

\begin{table}
\caption{The case $a = \pi(x) = e_1$}
\label{t1}
$\begin{array}{cccc}
b = \beta_1e_1 + \beta_2 e_2 & c = \gamma_1e_1 + \gamma_2e_2 & C(a,b,c) & C(b,c,a)\\
\hline
e_1 & e_2 & c_{112} & c_{121} \\
e_1 & e_1 + e_2 & c_{111} + c_{121} & c_{111} + c_{112} \\
e_2 & e_1 & c_{121} & c_{112} \\
e_2 & e_1+e_2 & c_{122} + c_{121} & c_{122} + c_{112} \\
e_1 + e_2 & e_1 & c_{111} + c_{112} & c_{111} + c_{121} \\
e_1+e_2 & e_2 & c_{112} + c_{122} & c_{122} + c_{121}
\end{array}$
\end{table}

\begin{table}
\caption{The case $a = \pi(x) =  e_2$}
\label{t2}
$\begin{array}{cccc}
b = \beta_1e_1 + \beta_2 e_2 & c = \gamma_1e_1 + \gamma_2e_2 & C(a,b,c) & C(b,c,a)\\
\hline
e_1 & e_2 & c_{212} & c_{122} \\
e_1 & e_1 + e_2 & c_{112} + c_{212} & c_{112} + c_{122} \\
e_2 & e_1 & c_{122} & c_{212} \\
e_2 & e_1+e_2 & c_{222} + c_{212} & c_{222} + c_{122} \\
e_1 + e_2 & e_1 & c_{112} + c_{122} & c_{112} + c_{212} \\
e_1 + e_2, & e_2 & c_{222} + c_{122} & c_{222} + c_{212}
\end{array}$
\end{table}

\begin{table}
\caption{The case $a = \pi(x) = e_1 +e_2$}
\label{t3}
$\begin{array}{cccc}
b  & c & C(a,b,c) & C(b,c,a)\\
\hline
e_1 & e_2 & c_{112} + c_{212} & c_{122} +c_{121} \\
e_1 & e_1 + e_2 & c_{111} + c_{212} & c_{111} + c_{112} +c_{122} + c_{121} \\
e_2 & e_1 & c_{122}+ c_{212} & c_{112}+c_{121} \\
e_2 & e_1+e_2 & c_{222} + c_{121} & c_{222} + c_{122} + c_{112} + c_{212}\\
e_1 + e_2 & e_1 & c_{111} + c_{112} + c_{122} + c_{121} & c_{111} + c_{212} \\
e_1 + e_2 & e_2 & c_{222} + c_{122} + c_{112} + c_{212} & c_{222} + c_{121}
\end{array}$
\end{table}

\begin{lem}
\lemlabel{88}
If $a,b,c \in B_2$, then $aC(a,b,c) = C(b,c,a)$.
\end{lem}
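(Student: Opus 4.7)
The statement reduces to a finite case check in the four-element group $B_2 = \{0, e_1, e_2, e_1+e_2\}$. The plan is a direct case analysis on $(a,b,c) \in B_2^3$. Whenever at least one of $a,b,c$ equals $0$, both $C(a,b,c)$ and $C(b,c,a)$ vanish by the conventions on $C$ laid down just before \eqnref{eC}, so the identity holds trivially, and only the $27$ triples with entries in $\{e_1, e_2, e_1+e_2\}$ need attention.

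Of these $27$ cases, the $18$ with $b \ne c$ are already half-tabulated during the proof of Proposition \prpref{87}: Tables \ref{t1}, \ref{t2}, \ref{t3} list, for each of the three choices of $a$, the six pairs $(b,c)$ with $b \ne c$, and their third and fourth columns record exactly $C(a,b,c)$ and $C(b,c,a)$. For each such row I need only apply $a$ to the third-column entry using \eqnref{eact} (after the identification $c_{ijk} = c_{kji}$, when relevant) and verify that the result equals the fourth-column entry. The nine diagonal cases $b = c$ are even easier, because the built-in symmetry $C(x,y,z) = C(z,y,x)$ forces $C(a,b,b) = C(b,b,a)$; thus the identity reduces to the assertion that $a$ fixes $C(a,b,b)$, which I read off from \eqnref{eC} (applying the symmetry when needed) and verify termwise against \eqnref{eact}.

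The one place where I expect to have to be careful is in the cases $a = e_1+e_2$, because there ``multiplication by $a$'' means applying the two generators $e_1$ and $e_2$ in succession, and the formula $e_j c_{iii} = c_{iii} + c_{iij} + c_{iji}$ from \eqnref{eact} produces extra basis vectors that must then be fed into the second generator. These extras should cancel in pairs modulo $2$; the single most delicate instance is almost certainly the fully diagonal triple $(e_1+e_2, e_1+e_2, e_1+e_2)$, where the four-term sum $C(e_1+e_2,e_1+e_2,e_1+e_2) = c_{111} + c_{222} + c_{112} + c_{221}$ must be shown invariant under $e_1 e_2$. Once these delicate summations are accounted for, the lemma will follow purely by inspection.
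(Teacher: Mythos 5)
Your proposal is correct and follows essentially the same route as the paper: the paper likewise disposes of the diagonal cases by showing $a$ fixes $C(a,b,b)$, then handles the $b \ne c$ cases by multiplying the penultimate column of Tables \ref{t1}--\ref{t3} by $a$ and comparing with the last column, with the extra care for $a = e_1 + e_2$ (composing the actions of $e_1$ and $e_2$ and using Lemma \lemref{83}) that you correctly flag as the delicate point.
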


\begin{proof}
We first verify $aC(a,b,b) = C(a,b,b)$. We have
$e_iC(e_i,e_j,e_j) = c_{ijj} = C(e_i,e_j,e_j)$, both
for $i =j$ and $i \ne j$. In the latter case
$e_iC(e_i,e_i+e_j,e_i+e_j) = e_i(c_{iii} + c_{iij}
+ c_{jji} + c_{iji}) = c_{iii} + c_{iji} + c_{jji}
+c_{iij} = C(e_i,e_i+e_j,e_i+e_j)$,
$e_j(e_iC(e_i+e_j,e_i,e_i)) = e_j(c_{iii} + c_{iij})
= c_{iii} + c_{iji} = C(e_i+e_j,e_i,e_i)$ and
$e_j(e_iC(e_i+e_j,e_i+e_j,e_i+e_j)) = e_j
(c_{iii} + c_{jjj} + c_{ijj} + c_{jij} + c_{iji}
+ c_{ijj}) = c_{iii} + c_{iij} + c_{iji} + c_{jjj}
+ c_{ijj} + c_{iji} = C(e_i+e_j,e_i+e_j,e_i+e_j)$.

We can thus assume $b\ne c$, and also $0 \notin \{b,c\}$.
When the penultimate column of Table~\ref{t1} is
multiplied by $a = e_1$, we clearly always get the rightmost
column. Hence the equality holds for $a = e_1$, and
the same approach can be used for the case $a = e_2$.
Let us have $a = e_1+e_2$. We shall use Table~\ref{t3}.
It is enough to observe that $e_1(e_2x) =y$ when
$(x,y)$ takes the values $(C(a,b,c), C(b,c,a))$
in the first four rows, by Lemma \lemref{83}. The second
and the fourth row follow from the formula
$e_i(e_j(c_{iii}+c_{jij})) = e_i(c_{iii} + c_{iij}
+c_{iji} + c_{ijj}) = c_{iii} + c_{iji} + c_{iij}
+c_{ijj}$. Finally, $e_1(e_2(c_{112} + c_{212}))
= e_1(c_{112} + c_{122}) = c_{121} + c_{122}$
and $e_1(e_2(c_{122} + c_{121})) = e_1(c_{212} + c_{121})
= c_{212} + c_{112}$.
\end{proof}

\begin{cor}
\corlabel{89}
$xf(x,y,z) = f(y,z,x)$ for all $x,y,z \in B$.
\end{cor}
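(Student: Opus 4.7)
The plan is to split $f(x,y,z)$ into the $C$-part and the two $s_h$-parts, and show that the action of $x$ on each piece produces exactly the corresponding piece of $f(y,z,x)$.

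First I would handle the $s_h$-parts. By \eqnref{esper}, $s_h(x,y,z) = s_h(y,z,x)$ for $h \in \{1,2\}$, so the coefficients match automatically. I then need to see that the vectors $c_{112}+c_{121}$ and $c_{122}+c_{212}$ are fixed by the action of $x$. Using the identification $c_{ijk} = c_{kji}$, the second vector equals $c_{221}+c_{212}$, so both vectors lie in the $B$-centralized subspace described by Lemma~\lemref{84}. Hence $x$ fixes them, and the $s_h$-contributions to $xf(x,y,z)$ and $f(y,z,x)$ coincide.

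Next I would treat the $C$-part. By Lemma~\lemref{83}, the action of $B$ on $A$ factors through the quotient $B_2 = B/B^2$, i.e.\ $x$ and $\pi(x)$ act identically on any element of $A$. Therefore
\[
x \cdot C(\pi(x),\pi(y),\pi(z)) = \pi(x) \cdot C(\pi(x),\pi(y),\pi(z)),
\]
and Lemma~\lemref{88} gives $\pi(x) \cdot C(\pi(x),\pi(y),\pi(z)) = C(\pi(y),\pi(z),\pi(x))$. Combining the three pieces produces $xf(x,y,z) = f(y,z,x)$, which is the claim.

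There is essentially no obstacle: the content has been done in the preceding lemmas (especially Lemma~\lemref{88}), and the present corollary is just an assembly. The only small thing to verify carefully is that both vectors $c_{112}+c_{121}$ and $c_{122}+c_{212}$ appearing in the definition of $f$ are fixed by $B$; this uses the symmetry convention $c_{ijk} = c_{kji}$ together with Lemma~\lemref{84}.
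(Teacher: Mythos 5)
Your proof is correct and follows essentially the same route as the paper: both reduce the claim to Lemma \lemref{88} by observing that the $s_h$-coefficients are permutation-invariant by \eqnref{esper} and that the vectors $c_{112}+c_{121}$ and $c_{122}+c_{212}=c_{221}+c_{212}$ are centralized by $B$ via Lemma \lemref{84}. Your extra care in checking the identification $c_{ijk}=c_{kji}$ and in noting that the action factors through $B_2$ is exactly the implicit content of the paper's terser argument.
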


\begin{proof}
Let us express $f(x,y,z)$ and $f(y,z,x)$
following the definition \eqnref{efdef}. By using \eqnref{esper} and
Lemma \lemref{84} we
see that the statement holds if and only if
\[
\pi(x)C(\pi(x), \pi(y),\pi(z)) = C(\pi(y),\pi(z),\pi(x))
\]
for all $x,y,z \in B$.
However, that is exactly what is claimed in Lemma \lemref{88}.
\end{proof}

Note that we have just verified \eqnref{efcond1} since $A$ has
exponent $2$. To get \eqnref{efcond2}, several
further verifications that go back to the definition of $f$
are needed if we wish to obtain a direct proof. Such a proof
is possible. But it is not needed, if we can show, without using \eqnref{efcond2},
that there exists a loop $Q$ such that $Q/N$ corresponds to $B$, $A(Q)$
to $A$ and $f$ to $[-,-,-]$, including the actions. Indeed,
in such a case \eqnref{efcond1} guarantees that $Q$ will be a
Buchsteiner loop, by Proposition \prpref{73}, and so
\eqnref{efcond2} will then follow from Lemma \lemref{78}.
We shall see that the suggested path is viable.

\section{Extending the associators to a loop}
\seclabel{extend}

Our goal now is to build a Buchsteiner loop $Q$ with
$Q/N \cong B$ and $A(Q) \cong A$ in such a way that $[-,-,-]$
corresponds to the mapping $f:B^3 \to A$ defined by \eqnref{efdef}.

From \eqnref{e78}, we see that $xa\cdot yb = (xy)(a^yb)$ for all
$x,y \in Q$ and $a,b \in N$ whenever $Q$ is a loop with $N \unlhd Q$.
In our case, we know the action of $Q/N$ on $N$ since we intend
to construct a loop with $N(Q) = A(Q)$. The issue, then, is to
define the product $xy$ for representatives of classes modulo $N$.

If $Q$ is a Buchsteiner loop, then
\begin{equation}
\eqnlabel{e91}
\setof{x^2 a}{x\in Q\text{ and } a\in N(Q)} \unlhd Q
\end{equation}
is a group.
Indeed, it is a normal subloop by Theorem \thmref{712}, and it is a group by
Lemma \lemref{81}(iii).

From considering \eqnref{e91}, we arrive at the idea that
the definitions of products $xy$ can be restricted to
representatives modulo this group.
In other words, we might be able to consider only $x$ and $y$
that correspond to square-free elements of $B$.
The next lemma gives the necessary technical basis for such an
approach.

At this point, we require the notion of loop \emph{commutator},
defined by
\[
xy = yx\cdot [x,y]
\]
for all $x,y$ in a loop $Q$. In a Buchsteiner loop, each
commutator lies in the nucleus, by Theorem
\thmref{abelian-factor}.

\begin{lem}
\lemlabel{91}
Let $Q$ be a Buchsteiner loop. For all $x,y,u,v\in Q$,
\[
xu^2 \cdot yv^2 = (xy\cdot u^2v^2) [u^2,y]^{v^2}[x,y,v^2]([x,y,u^2]\inv
[y,x,u^2])^{v^2}\,.
\]
\end{lem}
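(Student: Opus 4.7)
The plan is to rewrite the left side $xu^2 \cdot yv^2$ by repeated application of Lemma \lemref{72}, using the vanishing identity for two-square associators from Lemma \lemref{81}(iii) together with the commutator identity $u^2 y = yu^2 [u^2,y]$, where $[u^2,y] \in N$ by Theorem \thmref{abelian-factor}.

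First I would peel the outer $v^2$ off via Lemma \lemref{72}:
\[
xu^2 \cdot yv^2 = ((xu^2)y)v^2 \cdot [xu^2,y,v^2]\inv.
\]
By Lemma \lemref{78}, $[xu^2,y,v^2] = [x,y,v^2]^{u^2}[u^2,y,v^2]$, and the second factor vanishes by Lemma \lemref{81}(iii). The residual $[x,y,v^2]^{u^2}$ can then be reduced to $[x,y,v^2]$ by first cycling through Corollary \corref{75} to $[v^{-2},x,y]$ and then invoking Lemma \lemref{81}(ii) together with the exponent-$2$ bound of Proposition \prpref{711}; this gives the factor $[x,y,v^2]$ of the target.

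Next I would swap $u^2$ and $y$ inside $(xu^2)y$. Writing $u^2 y = y u^2 [u^2,y]$ and pushing the nuclear commutator $[u^2,y]$ past the remaining factors (in particular past the outer $v^2$, where it picks up the exponent $v^2$) accounts for the factor $[u^2,y]^{v^2}$. Regrouping the resulting $x(yu^2) \cdot v^2$ as $xy \cdot u^2 v^2$ via Lemma \lemref{72} produces further associators $[xy,u^2,v^2]$ and $[x,y,u^2v^2]$; the first vanishes by Lemma \lemref{78} combined with Lemma \lemref{81}(iii), and the second splits as $[x,y,u^2]^{v^2}[x,y,v^2]$ by Lemma \lemref{78}, contributing one of the $v^2$-conjugated factors.

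The remaining obstruction to getting the clean form $xy \cdot u^2 v^2$ is that the peeling in the first step extracts $[x,u^2,y]$, whereas the target formula contains associators with $u^2$ in the \emph{third} position. To reconcile these, I would apply Proposition \prpref{710} (in the form $[x,y^{-1},z] = [y,z,x][x,y,z][z,x,y]\inv$) together with Corollary \corref{75} to rewrite $[x,u^2,y]$, modulo $v^2$-conjugation, as $[x,y,u^2]\inv[y,x,u^2]$, giving exactly the fourth factor $([x,y,u^2]\inv[y,x,u^2])^{v^2}$.

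The main obstacle will be the sheer bookkeeping: every application of Lemma \lemref{72} produces an associator that has to be conjugated past the remaining factors, every invocation of the commutator identity produces a nuclear element whose exponent must be tracked carefully, and the reduction of several extraneous conjugations to the identity relies on the interplay of Proposition \prpref{711} (order $2$) with Lemma \lemref{81}(ii) (invariance of $[x^2,y,z]$). The single most delicate step is the rewriting of $[x,u^2,y]$ in terms of associators with $u^2$ in the third position, which is where Proposition \prpref{710} must be invoked, and where the ``anomalous'' combination $[x,y,u^2]\inv[y,x,u^2]$ in the target acquires its meaning as the defect measuring the interchange of $x$ and $y$.
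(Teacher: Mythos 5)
Your overall architecture coincides with the paper's: peel off the outer $v^2$, reduce $[xu^2,y,v^2]$ to $[x,y,v^2]$, move $u^2$ past $y$ at the cost of the nuclear commutator $[u^2,y]$ and the associator $[x,u^2,y]$, and regroup. However, two of your justifications do not go through as stated. First, to remove the conjugation in $[x,y,v^2]^{u^2}$ you invoke Corollary \corref{75}, Lemma \lemref{81}(ii) and Proposition \prpref{711}; but Lemma \lemref{81}(ii) only gives invariance of $[v^2,x,y]$ under conjugation by its own arguments $v$, $x$, $y$, and says nothing about conjugation by the unrelated element $u^2$. The claim $[x,y,v^2]^{u^2}=[x,y,v^2]$ is in fact true, but the clean way to avoid the issue is to split the other way, $[xu^2,y,v^2]=[u^2x,y,v^2]=[u^2,y,v^2]^x[x,y,v^2]$, so that the conjugation falls on the factor that vanishes by Lemma \lemref{81}(iii) --- which is exactly what the paper does. (Comparing that splitting with yours then \emph{proves} $[x,y,v^2]^{u^2}=[x,y,v^2]$, but this is not the argument you cite.)

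Second, and more seriously, your final reconciliation asserts $[x,u^2,y]=[x,y,u^2]\inv[y,x,u^2]$. This is false in general: the correct identity is simply $[x,u^2,y]=[u^2,y,x]=[y,x,u^2]$, immediate from the cyclic invariance in Lemma \lemref{81}(i), with no need for Proposition \prpref{710}. The factor $[x,y,u^2]\inv$ in the target does not come from $[x,u^2,y]$ at all; it is the associator extracted when $x\cdot yu^2$ is regrouped as $(xy\cdot u^2)[x,y,u^2]\inv$ (equivalently, the $[x,y,u^2]^{v^2}$ term in your splitting of $[x,y,u^2v^2]$), and you already account for it there. Using your identity on top of that double-counts $[x,y,u^2]$; since these elements have order $2$ the two copies cancel, and you would end with $[y,x,u^2]^{v^2}$ in place of $([x,y,u^2]\inv[y,x,u^2])^{v^2}$ --- off by the factor $[x,y,u^2]^{v^2}$, which is nontrivial for suitable $x,y$ whenever $u^2\notin N(Q)$, a situation Theorem \thmref{99} shows does occur. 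With these two corrections your bookkeeping closes and reproduces the paper's proof.
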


\begin{proof}
We start with $xu^2 \cdot yv^2 = (xu^2\cdot y)v^2 [xu^2,y,v^2]\inv$,
and note that $[xu^2,y,v^2] = [x,y,v^2][u^2,y,v^2]^x = [x,y,v^2]$,
by Lemmas \lemref{78} and \lemref{81}.
Furthermore, $xu^2 \cdot y = x \cdot u^2y [x,u^2,y] = x\cdot yu^2
[u^2,y][x,u^2,y] = xy \cdot u^2 [x,y,u^2]\inv [u^2,y][x,u^2,y]$.
\end{proof}

In our case we get a simpler equality
\begin{equation}
\eqnlabel{e92}
xu^2 \cdot yv^2 = (xy \cdot u^2v^2)[u^2,y][x,y,v^2]\,,
\end{equation}
since we assume $A(Q) = N(Q)$, and $A(Q)$ is centralized by
squares, by Lemma \lemref{83}. The other simplification follows
from  Corollary \corref{86}(ii).

The question now is how to compute $[u^2,x]$. Note that
the unique $a \in Q$ satisfying
$x^2\cdot x = (x\cdot x^2)a$ is equal to both $[x,x,x]$ and
$[x^2,x]$. This solves the case $x=a$. To understand the remaining
cases, we start from a general statement.

\begin{lem}
\lemlabel{92}
Let $Q$ be a loop with $A(Q)\le N(Q)$ such that $Q/N$ is an abelian
group. For all $x,y,z\in Q$,
\[
[xy,z] = [x,z]^y[y,z][x,z,y]\inv [x,y,z][z,x,y]\,.
\]
\end{lem}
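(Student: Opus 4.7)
The plan is to compute $(xy)z$ and $z(xy)$ separately, bring both into a common associated form $(zx\cdot y)\cdot(\text{element of }N)$, and then extract $[xy,z]$ from the defining identity $(xy)z = z(xy)\cdot[xy,z]$. Since $Q/N$ abelian forces $N(Q)\unlhd Q$, Lemma \lemref{QA-group}(i) gives $A(Q)\le Z(N(Q))$, so every associator lies in the centre of the nucleus; this will be essential for reordering factors at the end, since $N(Q)$ is not assumed to be commutative.

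First I would apply Lemma \lemref{72} to obtain $(xy)z = (x\cdot yz)[x,y,z]$ and $z(xy) = (zx\cdot y)[z,x,y]\inv$. Next I would reduce $x\cdot yz$ to a product based at $zx\cdot y$ via the commutator rewrites $yz = zy[y,z]$ and $xz = zx[x,z]$, the associator rule $x\cdot zy = (xz\cdot y)[x,z,y]\inv$, and the conjugation identity $[x,z]y = y\,[x,z]^y$ from \eqnref{e78}. A short chain of substitutions (with each nuclear element slipped past the next using $u(va) = (uv)a$ or $ua\cdot v = u\cdot av$, since $N \subseteq N_\mu\cap N_\rho$) yields
\begin{equation*}
(xy)z = (zx\cdot y)\,[x,z]^y\,[x,z,y]\inv\,[y,z]\,[x,y,z].
\end{equation*}
Combining with the expansion of $z(xy)$ and cancelling $(zx\cdot y)$ leaves
\begin{equation*}
[z,x,y]\inv\,[xy,z] = [x,z]^y\,[x,z,y]\inv\,[y,z]\,[x,y,z].
\end{equation*}

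Since $[z,x,y]\in A(Q)\le Z(N(Q))$, it commutes with $[xy,z]$, so multiplying on the left by $[z,x,y]$ gives $[xy,z] = [z,x,y]\,[x,z]^y\,[x,z,y]\inv\,[y,z]\,[x,y,z]$. To reach the form stated in the lemma, I would then use the centrality in $N(Q)$ of each of the three associators $[z,x,y]$, $[x,z,y]\inv$, $[x,y,z]$ to shuffle them freely past the two (possibly non-central) commutator-type factors $[x,z]^y$ and $[y,z]$, arriving at $[x,z]^y[y,z][x,z,y]\inv[x,y,z][z,x,y]$. The main obstacle is not conceptual but clerical: tracking which factors live only in the possibly non-abelian $N(Q)$ versus in its centre $A(Q)$, and justifying each reordering step through Lemma \lemref{QA-group}(i).
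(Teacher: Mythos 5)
Your proposal is correct and follows essentially the same route as the paper's proof: both expand $xy\cdot z$ through the chain $x\cdot yz \to x\cdot zy \to xz\cdot y \to zx\cdot y$ using Lemma \lemref{72}, the commutator rewrites, and \eqnref{e78}, then compare with $z\cdot xy$ and invoke $A(Q)\le Z(N(Q))$ from Lemma \lemref{QA-group}(i) to reorder the nuclear factors. The only cosmetic difference is that you meet the two expansions at $zx\cdot y$ whereas the paper pushes $xy\cdot z$ all the way down to $z\cdot xy$; the computation is identical.
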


\begin{proof}
Now, $xy\cdot z = (x\cdot yz)[x,y,z] = x \cdot zy [y,z][x,y,z]
= xz \cdot y [x,z,y]\inv [y,z][x,y,z]$, and $xz\cdot y = zx[x,z]\cdot y
= zx\cdot y [x,z]^y=z\cdot xy [z,x,y][x,z]^y$. These two equalities,
together with Lemma \lemref{QA-group}(i), imply
\[
xy \cdot z= z\cdot xy [x,z]^y[y,z][x,z,y]\inv [x,y,z][z,x,y]\,.
\]
This gives the desired result since $xy\cdot z$ can be also expressed as
$z\cdot xy[xy,z]$.
\end{proof}

\begin{cor}
\corlabel{93}
Let $Q$ be a Buchsteiner loop. For each $x,y,z\in Q$, if
$[x,z,y] = [y,z,x]$, then $[xy,z] = [x,z]^y[y,z][x,y\inv,z]$.
\end{cor}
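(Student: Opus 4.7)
The plan is to combine Lemma \lemref{92} with the hypothesis $[x,z,y] = [y,z,x]$ and then identify the resulting tail of the product with $[x,y\inv,z]$ using equation \eqnref{e710} and Lemma \lemref{81}(i). So I would start by writing out
\[
[xy,z] = [x,z]^y[y,z][x,z,y]\inv [x,y,z][z,x,y]
\]
and substitute $[x,z,y] = [y,z,x]$ to obtain the expression $[x,z]^y[y,z]\cdot [y,z,x]\inv[x,y,z][z,x,y]$. What remains is to show that the final three-factor product equals $[x,y\inv,z]$.

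To see this I would invoke commutativity of $A(Q)$, which holds because $A(Q)\le Z(N(Q))$ by Lemma \lemref{QA-group}(i); this lets me reorder any product of associators. By \eqnref{e710},
\[
[x,y\inv,z] = [y,z,x][x,y,z][z,x,y]\inv .
\]
On the other hand, Lemma \lemref{81}(i) gives $[y,z,x]^2 = [z,x,y]^2$, so in the abelian group $A(Q)$ the element $[y,z,x][z,x,y]\inv$ has order at most two, i.e.\ $[y,z,x][z,x,y]\inv = [z,x,y][y,z,x]\inv$. Plugging this into the displayed expression for $[x,y\inv,z]$ and commuting, I get $[x,y\inv,z] = [x,y,z][z,x,y][y,z,x]\inv = [y,z,x]\inv[x,y,z][z,x,y]$, which is exactly the tail obtained above. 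Combining gives the claim.

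There is no real obstacle here: the result is just a tidy rearrangement of Lemma \lemref{92} made possible by two symmetry facts already at our disposal (the hypothesis which merges two of the six ``cyclic'' associator values, and the squaring identity from Lemma \lemref{81}(i) which forces $[y,z,x]$ and $[z,x,y]$ to be interchangeable up to inversion in the abelian group $A(Q)$). The only point requiring a little care is to keep track of which associators commute — but this is guaranteed once one notes $A(Q)\le Z(N(Q))$.
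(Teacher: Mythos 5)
Your proof is correct. It starts, as the paper does, from Lemma \lemref{92} and the substitution $[x,z,y]=[y,z,x]$, so everything reduces to identifying the tail $[y,z,x]\inv[x,y,z][z,x,y]$ with $[x,y\inv,z]$. The paper's (unstated) route is to apply Proposition \prpref{710} to the cyclically shifted triple $(z,x,y)$, which gives $[x,y,z][z,x,y]=[y,z,x][z,x\inv,y]$, and then to use Corollary \corref{75} to recognize $[z,x\inv,y]=[x,y\inv,z]$; the tail then collapses immediately. You instead apply \eqnref{e710} at $(x,y,z)$ itself, which produces $[y,z,x][x,y,z][z,x,y]\inv$ rather than the tail, and you bridge the gap with the squaring identity $[y,z,x]^2=[z,x,y]^2$ of Lemma \lemref{81}(i) together with commutativity of $A(Q)$, which you correctly justify via $A(Q)\le Z(N(Q))$ from Lemma \lemref{QA-group}(i). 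Both routes are legitimate: yours trades the cyclic-shift symmetry of Corollary \corref{75} for the exponent-two information of Lemma \lemref{81}(i), costing one extra auxiliary fact but avoiding any relabelling of the associator arguments.
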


\begin{proof}
Use Proposition \prpref{710} and Corollary \corref{75}.
\end{proof}

From Lemma \lemref{78} we see that the next statement can be easily proved
by induction on the length of words in $Q/N$. The second part
of the statement follows from Lemma \lemref{79}.

\begin{lem}
\lemlabel{94}
Let $Q$ be a Buchsteiner loop such that $Q/N$ is generated by
the set $\{xN$; $x \in X\}$, where $X\subseteq Q$. If $[x,y,z]
= [z,y,x]$ for all $x,y,z \in X$, then this property holds
for all $x,y,z \in Q$. In particular it is true whenever $Q/N$
can be generated by two elements.
\end{lem}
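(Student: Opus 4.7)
The plan is to proceed by induction on the total word length of $x$, $y$, $z$ expressed in terms of the generators of $Q/N$. Since associators depend only on cosets modulo $N$ (Lemma \lemref{assoc-basics}), and $Q/N$ is an abelian group (Theorem \thmref{712}), every triple $(x,y,z) \in Q^3$ can be represented by words in the elements of $X$ and their inverses. The base case of words of length at most one is the hypothesis together with the observation that $[-,-,-]$ vanishes whenever one argument lies in $N$.

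For the inductive step, Lemma \lemref{78} expresses $[-,-,-]$ as multiplicative in each slot, namely $[uv,y,z] = [u,y,z]^v[v,y,z]$ and analogously in the other two positions. Supposing the symmetry $[x',y',z'] = [z',y',x']$ for all shorter $x'$, $y'$, $z'$, we compute
\[
[uv,y,z] = [u,y,z]^v[v,y,z] = [z,y,u]^v[z,y,v] = [z,y,uv],
\]
the last equality being the expansion of $[z,y,uv]$ via the third-slot formula. Growth in the middle and third slots is handled identically. Inverses are treated by starting from $1 = [xx\inv,y,z]$ and expanding by Lemma \lemref{78} to obtain $[x\inv,y,z] = ([x,y,z]\inv)^{x\inv}$, together with the analogous identities in the other two slots; these show that the symmetry is inherited by the inverse of any argument. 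This closes the induction.

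For the final clause, suppose $X = \{a,b\}$ generates $Q/N$. The only triples $(x,y,z) \in X^3$ for which $[x,y,z] = [z,y,x]$ is not immediate are those with $x \ne z$, which reduce up to swapping $x$ and $z$ to $[a,a,b] = [b,a,a]$ and $[a,b,b] = [b,b,a]$. Both follow at once from Lemma \lemref{79}, so the hypothesis of the main statement holds. No serious obstacle is anticipated; the one point to keep straight is that the $v$-action is by an automorphism of $A(Q)$, so it distributes correctly across the products appearing on the two sides of each identity being proved.
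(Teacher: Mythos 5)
Your proof is correct and follows exactly the route the paper indicates: induction on word length in $Q/N$ using the multiplicativity formulas of Lemma \lemref{78} (the paper leaves these details to the reader), with Lemma \lemref{79} disposing of the two-generator case. The paper's own "proof" is just this remark, so your write-up supplies the missing details faithfully, including the correct treatment of inverses via $1=[xx\inv,y,z]$.
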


\begin{lem}
\lemlabel{95}
Let $Q$ be a Buchsteiner loop with elements $x$ and $y$. Then
\begin{enumerate}
\item $[x^2,y] = [x,y]^x[x,y][x,y,x]$,
\item $[x^2,xy] = [x,y]^x[x,y][x,x,x][x,y,x]\inv$,
\item $[x^2y^2,y] = [x,y]^{xy^2}[x,y]^{y^2}[y,y,y][x,y,x]$,
\item $[x^2y^2,x] = [y,x]^y[y,x][x,x,x][y,x,y,]$, and
\item $[x^2y^2,xy] = [x,y]^{xy^2}[y,x]^y
[x,x,x][y,y,y][x,y,x]\inv [y,x,y]\inv$.
\end{enumerate}
\end{lem}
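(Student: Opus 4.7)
The plan is to prove all five identities as applications of Corollary 8.3 (which splits a commutator of a product in the first argument) together with the associator bookkeeping developed in Sections 7--8. Since every identity involves only $x$ and $y$, I will work in the subloop $\langle x,y\rangle$, where Lemma 8.4 tells us $[a,b,c]=[c,b,a]$ holds throughout, so Corollary 8.3 applies in its full strength:
\[
[ab,c]=[a,c]^b\,[b,c]\,[a,b\inv,c].
\]
Throughout, I will exploit freely that $A(Q)\leq Z(N(Q))$ (Lemma 7.3(i)), so all rearrangements of associators in what follows are legitimate.

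For (i), apply the displayed formula with $a=b=x$, $c=y$, and rewrite $[x,x\inv,y]=[x,x^3,y]$; expand the latter by Lemma 7.8 as $[x,x,y]^{x^2}[x,x^2,y]$, then collapse using Lemma 7.6 and the chain in Lemma 8.2(ii)--(iii) to obtain $[x,x^3,y]=[x,y,x]$. For (ii), apply the same formula with $c=xy$. The term $[x,xy]$ can be obtained by a second application of Corollary 8.3 to $[xy,x]=[x,x]^y[y,x][x,y\inv,x]$ (with $[x,y\inv,x]=[x,y,x]$ by Lemma 7.8 and Lemma 8.2(iv)) and then inverting. The remaining factor $[x,x\inv,xy]$ expands via Lemma 7.8 in the third slot to $[x,x\inv,x]^y[x,x\inv,y]$, in which $[x,x\inv,x]=[x,x,x]$ (the $[x^2,x,x]$ summand vanishing by Lemma 8.2(iii) at $y=x$) and $[x,x\inv,y]=[x,y,x]$ from Step~(i). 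Combining and using Lemma 8.2(v) to replace $[x,x,x]^y$ yields the stated expression.

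For (iii)--(v), apply Corollary 8.3 with $a=x^2$, $b=y^2$, and $c\in\{y,x,xy\}$. The key simplification is that the residual associator $[x^2,y^{-2},c]=[x^2,y^2,c]$ (exponent 4 for $Q/N$, Theorem 7.12) vanishes for each choice of $c$: directly by Lemma 8.1(iii) when $c=x$ or $c=y$, and by Lemma 7.8 together with the same two vanishings when $c=xy$. Therefore the commutator reduces to $[x^2,c]^{y^2}[y^2,c]$. For (iii) we substitute the formula from (i) for $[x^2,y]$ and use Lemma 7.6 to see $[x,y,x]^{y^2}=[x,y,x]$, along with the identity $[y^2,y]=[y,y,y]$. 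For (iv) we use the analogous form of (i) obtained by the substitution $x\leftrightarrow y$, together with $[x^2,x]=[x,x,x]$ and the observation that $[x,x,x]^{y^2}=[x,x,x]$: this last equality is Lemma 8.1(iv) applied to the form $[a,b,b]$ with $a=b=x$. For (v) we substitute (ii) and its $x\leftrightarrow y$ analogue, applying Lemma 7.6 and Lemma 8.1(iv) to the $y^2$-action as in (iv).

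The main obstacle is carrying out (v) cleanly, since it requires computing $[y^2,xy]$ as opposed to $[y^2,yx]$ and then conjugating the result of (ii) by $y^2$. I would proceed by applying Corollary 8.3 to $[y\cdot y,xy]$ (so that the split is in the first argument), compute $[y,xy]$ by inverting a Corollary 8.3 expansion of $[xy,y]$, and evaluate $[y,y\inv,xy]$ via Lemma 7.8 in the third slot; the two sub-terms $[y,y\inv,y]$ and $[y,y\inv,x]$ then reduce, using the same techniques as in (i)--(ii), to expressions already at hand. A careful bookkeeping of the $y$- and $y^2$-actions via Lemmas 7.6, 7.7 and 8.1(iv), together with centrality of $A(Q)$ in $N(Q)$ to freely reorder factors, delivers the final identity.
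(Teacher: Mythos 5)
Your proposal is correct and follows essentially the same route as the paper's proof: everything rests on the expansion of Corollary \corref{93} (legitimized by Lemma \lemref{94} in the two-generated setting) combined with the associator identities of Lemmas \lemref{76}--\lemref{79}, \lemref{81} and \lemref{82}, including the same direct computation of $[y^2,xy]$ in part (v). The only deviations are cosmetic --- e.g.\ in (ii) you expand $[x^2,xy]$ directly and invert $[xy,x]$ where the paper inverts $[xy,x^2]$, and in (i) you reduce $[x,x\inv,y]$ via $[x,x^3,y]$ rather than via $[x,y\inv,x]$ --- and each of the individual reductions you cite checks out.
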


\begin{proof}
To obtain (i), use Lemma \lemref{94} and Corollary \corref{93},
and note that $[x,x\inv,y] =
[x,y\inv,x] = [x,y^2,x][x,y,x] = [x,y,x]$, by Corollary \corref{75} and
Lemma \lemref{82}. For (ii), first note that $[x^2,xy]$ is the inverse
of $[xy,x^2] = [x,x^2]^y[y,x^2][x,y\inv,x^2]$. Now, $[x^2,x]^y
=[x,x,x]^y =[x,x,x][x,x,y]\inv [x,y,x]\inv$, where $[x,y,x]\inv$
gets canceled when $[x^2,y]$ is expressed by (i). Therefore (ii)
follows from Lemma \lemref{82}, since $[x,y,x^2]\inv = [x,x,y][x,y,x]\inv$.

To prove (iii), we first employ Corollary \corref{93} to get $[x^2,y]^{y^2}
[y,y,y][x^2,y^2,y]$. The latter associator is trivial, and the
rest follows from (i) and from Lemma \lemref{82}. To get (iv), we start
from $[x^2,x]^{y^2}[y^2,x]$, which evaluates to $[y,x]^y[y,x][y,x,y][x,x,x]$,
by (ii).

Let us show (v). From Corollary \corref{93} we obtain $[x^2,xy]^{y^2}
[y^2,xy]$, by Lemma \lemref{82}. One part of the expression thus follows
from (ii). Furthermore, $[y^2,xy] = [y,xy]^y[y,xy][y,xy,y]$,
and $[y,xy]$ is an inverse of $[xy,y] = [x,y]^y[y,x,y]$. Thus $[y^2,xy]
= [y,x]^{y^2}[y,x]^ya$, where $a$ is a product of $([y,x,y]\inv)^y[y,x,y]\inv$
and of $[y,xy,y] = [y,y,y][y,x,y]^y$. Hence $a = [y,y,y][y,x,y]\inv$.
\end{proof}

The formulas of Lemma \lemref{95} simplify substantially if $[x,y]=1$
is assumed. We shall do so in our construction. Under simplifying
assumptions, one can express Lemma \lemref{95} by a single formula:

\begin{cor}
\corlabel{96}
Let $Q$ be a Buchsteiner loop. Suppose that $e_1, e_2\in Q$
satisfy $e_1e_2 = e_2e_1$, and that all elements
$c_{ijk}= [e_i,e_j,e_k]$ are of exponent $2$, for all
$i,j,k \in \{1,2\}$. Then
\[
[e_1^{2\alpha_1} e_2^{2\alpha_2}, e_1^{\beta_1} e_2^{\beta_2}]
                 = \prod_{i,j} c_{iji}^{\alpha_i\beta_j}\,,
\]
for all $\alpha_1,\alpha_2,\beta_1,\beta_2 \in \{0,1\}$.
\end{cor}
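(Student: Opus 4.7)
The strategy is a finite case analysis on the sixteen tuples $(\alpha_1,\alpha_2,\beta_1,\beta_2) \in \{0,1\}^4$, with Lemma \lemref{95} doing the substantive work. When $\alpha_1 = \alpha_2 = 0$ or $\beta_1 = \beta_2 = 0$, one of the arguments of the commutator reduces to $1$ and both sides collapse to $1$, so only the nine tuples with both $(\alpha_1,\alpha_2) \ne (0,0)$ and $(\beta_1,\beta_2) \ne (0,0)$ carry content.

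The hypotheses simplify the right-hand sides of Lemma \lemref{95}(i)--(v) dramatically. The relation $e_1 e_2 = e_2 e_1$ is precisely $[e_1, e_2] = 1$, which annihilates every commutator term appearing in those formulas when $x,y$ are drawn from $\{e_1, e_2\}$. The assumption that each $c_{ijk}$ has exponent $2$ then permits us to drop every inverse sign on an associator. (Lemma \lemref{95} is derived using the symmetry $[x,y,z] = [z,y,x]$; this is legitimate here because the subloop generated by $\{e_1,e_2\}$ projects to a $2$-generated subgroup of $Q/N$, so Lemmas \lemref{79} and \lemref{94} supply the needed symmetry throughout that subloop.)

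With those simplifications, the five parts of Lemma \lemref{95} produce exactly what the corollary asserts on the nine nontrivial tuples: (i) gives $[e_i^2, e_j] = c_{iji}$ for $i,j \in \{1,2\}$; (ii) gives $[e_i^2, e_i e_j] = c_{iii}\, c_{iji}$ for $i \ne j$; (iii) gives $[e_i^2 e_j^2, e_j] = c_{jjj}\, c_{iji}$ for $i \ne j$; (iv) gives $[e_i^2 e_j^2, e_i] = c_{iii}\, c_{jij}$ for $i \ne j$; and (v) gives $[e_1^2 e_2^2, e_1 e_2] = c_{111}\, c_{121}\, c_{212}\, c_{222}$. In each case the resulting product of $c_{iji}$'s agrees term for term with $\prod_{i,j} c_{iji}^{\alpha_i \beta_j}$ for the corresponding tuple, and the order of multiplication is immaterial since $A(Q) \le Z(N(Q))$ by Lemma \lemref{QA-group}(i).

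The main obstacle is purely clerical: confirming, case by case, that the exponent pattern produced by Lemma \lemref{95} matches the one claimed by the corollary. There is no deeper difficulty, because all the substantive associator identities have already been established earlier in the section; the commutativity and exponent-$2$ hypotheses serve exactly to make those identities collapse into the clean multiplicative form claimed.
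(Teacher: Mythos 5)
Your proposal is correct and follows the same route as the paper: the corollary is presented there as an immediate consequence of Lemma \lemref{95} once $[e_1,e_2]=1$ kills the commutator factors and the exponent-$2$ hypothesis removes the inverse signs, which is exactly the case-by-case collapse you carry out. Your extra remarks on the trivial tuples, the $2$-generated symmetry needed behind Lemma \lemref{95}, and the centrality of $A(Q)$ in $N(Q)$ are all consistent with the paper and add nothing that changes the argument.
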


We are now ready for the construction. We shall define
a loop structure on
$Q = B \times A$ as follows. For all $i\alpha_h,\alpha'_h,
\beta_h,\beta_h' \in \{0,1\}$, where $h \in \{1,2\}$, and for all
$a,b \in A$, we set
\begin{multline}
\eqnlabel{e93}
\left (\prod e_h^{\alpha_h}\prod e_h^{2\alpha_h'},\  a\right )\cdot \left (\prod e_h^{\beta_h}
\prod e_h^{2\beta_h'},\ b\right ) = \\
\left(\prod_h e_h^{\alpha_h +\beta_h}\prod_h e_h^{\alpha_h\beta_h + \alpha_h' + \beta_h'},
\quad D(\alpha_1e_1 + \alpha_2e_2, \beta_1e_1 + \beta_2e_2)\  + \right. \\
\left. \sum_h(\alpha_1\beta_2 + \alpha_2\beta_1)\beta'_h z_h +
\sum_{i,j}\alpha'_i\beta_jc_{iji} +
e_1^{\beta_1} e_2^{\beta_2}a + b \right ) ,
\end{multline}
where $z_1 =c_{112} + c_{121}$, $z_2 = c_{221} + c_{212}$ and $D:B_2 \times B_2
\to A$ is defined in the following way:
\begin{equation}
\begin{gathered}
\eqnlabel{e94}
D(u,v) = 0 \text{ if }u,v \neq e_1 + e_2,\\
D(e_1, e_1+e_2) = c_{112} + c_{121}\, \text { and } D(e_2,e_1+e_2) = c_{122} + c_{212}, \\
D(e_1+e_2,e_1) = c_{112},\  D(e_1+e_2,e_2) = c_{122} \text{ and} \\
D(e_1+e_2,e_1+e_2) = c_{121} + c_{212}.
\end{gathered}
\end{equation}
The arithmetic in the exponents computed modulo $2$. Set
$x = \prod e_h^{\alpha_h+2\alpha'_h}$
and $y = \prod e_h^{\beta_h + 2\beta'_h}$. The formula
\eqnref{e93} can be also expressed as
\begin{equation}
\eqnlabel{e95}
(x,a)\cdot(y,b) = (xy,\, g(x,y)+ya +b), \text{ where } g:B^2 \to A.
\end{equation}

We shall now describe the structure of $g$. Put
$x_0 = (e_1^{\alpha_1}e_2^{\alpha_2},0) = (e_1^{\alpha_1},0)\cdot
(e_2^{\alpha_2},0)$ and $x_1= (e_1^{2\alpha'_1}
e_2^{2\alpha'_2},0) = (e_1^{2\alpha'_1},0)\cdot(e_2^{2\alpha'_2},0)
$, and similarly define $y_0$ and $y_1$.
We have $(x,0)=x_0x_1$ and $(y,0)=y_0 y_1$, by \eqnref{e94}.
Since \eqnref{e92} remains valid when $u^2$ and $v^2$ are
replaced by elements that are squares modulo $N$, we obtain
\begin{equation}
\eqnlabel{e96}
x_0 x_1 \cdot y_0 y_1 = x_0 y_0 \cdot x_1 y_1 [x_1,y_0][x_0,y_0,y_1].
\end{equation}

Now, $[x_0,y_0,y_1]$ should equal
$f(\prod e_i^{\alpha_i},\prod e_i^{\beta_i},
\prod e_i^{2\beta'_i}) = \sum_h \beta'_h(\alpha_1,
\beta_2 + \alpha_2\beta_1)z_h$, since we wish to construct
$Q$ in such a way that $f$ corresponds to $[-,-,-]$.
Furthermore $[x_1,y_0]$ can be enumerated by means of
Corollary \corref{96} as
$\sum \alpha_i'\beta_j c_{iji}$. From $[e_1,e_2] = 1$, one can
derive $[e_1^2,e_2^2]=1$, and so
$x_1y_1 = \prod e_i^{2(\alpha'_i + \beta'_i)}$.

The correction term $D(\pi(x),\pi(y))$ is caused by $x_0 y_0$.
Put, temporarily, $e_i' = (e_i,0)$, $i \in \{1,2\}$. Then, for example,
$e_1' e_2'\cdot e_1' = e_1' \cdot e_2' e_1' [e'_1,e'_2,e'_1]$,
$e_1'\cdot e_2' e_1' = e_1'\cdot e_1' e_2' = e_1'^2 e_2' [e_1',e_1',e_2']$
and $e_1'^2 e_2' = e_2' e_1'^2 [e_1'^2,e_2]$, where the latter
commutator is equal to $[e_1',e_2',e_1']$, by Lemma \lemref{95}. Therefore
$e_1' e_2'\cdot e_1' = e_2 \cdot e_1'^2 [e_1',e_1',e_2']$,
and that is why $D(e_1 + e_2,e_1) = c_{112}$. Other values of $D$
can be computed similarly.

Formula \eqnref{e95} is a standard way to obtain loops from groups.
If $x,y,z \in B$ and $a,b,c \in A$, then
\begin{equation}
\eqnlabel{e97}
[(x,a),(y,b),(z,c)] = (1, g(xy,z) + zg(x,y) - g(x,yz) - g(y,z)].
\end{equation}

Our task hence is to show
\begin{equation}
\eqnlabel{e98}
g(xy,z) + zg(x,y) + g(x,yz) + g(y,z) = f(x,y,z) \frall x,y,z \in B.
\end{equation}

In any of $x$, $y$ or $z$ is equal to $1$, then \eqnref{e98} holds,
and hence we shall assume $1\notin \{x,y,z\}$.

Now, $g(x,y)$ consists of the \emph{correction part}
$D(\sum\alpha_he_h,\sum\beta_he_h)$, the \emph{associator part}
$\sum (\alpha_1\beta_2 + \beta_2\alpha_1)\beta_h' z_h$ and
the \emph{commutator part} $\sum \alpha'_i\beta_jc_{iji}$.
By adding the associator parts of \eqnref{e98} we obtain
\begin{multline}
\eqnlabel{e99}
\sum_h\Big( \big((\alpha_1 + \beta_1)\gamma_2 + (\alpha_2 +
\beta_2)\gamma_1\big)\gamma'_h + \big(\alpha_1\beta_2 + \alpha_2\beta_1
\big) \beta'_h +  \\
\big( \alpha_1(\beta_2 + \gamma_2) + \alpha_2(\beta_1 + \gamma_1)\big)
(\beta'_h + \gamma'_h + \beta_h\gamma_h) + \big(\gamma_1\beta_2 +
                          \beta_1\gamma_2\big)\gamma'_h \Big) z_h,
\end{multline}
and the commutator parts yield
\begin{equation}
\eqnlabel{e900}
\sum_{i,j}\big((\alpha'_i + \beta'_i + \alpha_i\beta_i)\gamma_j
+ \alpha_i'(\beta_j + \gamma_j) + \beta'_i\gamma_j\big)c_{iji} \, + \,
e_1^{\gamma_1}e_2^{\gamma_2} \sum_{i,j}\alpha'_i\beta_i c_{iji}.
\end{equation}

Consider now only the terms in the associator and commutator parts that
involve $\gamma'_h$, $h \in \{1,2\}$. There is none in the commutator
part, and so we get $\sum \gamma'_h(\alpha_1\beta_2 + \alpha_2\beta_1)z_h$.
Terms with $\beta'_h$ occur in the commutator part, but they cancel out.
Hence for $\beta'h$ we obtain $\sum\beta'_h(\alpha_1\gamma_2 + \alpha_2\beta_1) z_h$.
Let us finally consider the terms with $\alpha'_h$. They occur in the
commutator part only and yield.
\begin{equation}
\eqnlabel{e901}
\sum_{i,j} \alpha'_i\beta_j c_{iji} + e_1^{\gamma_1}e_2^{\gamma_2}
\sum_{i,j} \alpha'_i\beta_j c_{iji}.
\end{equation}

We shall treat \eqnref{e901} as a sum of two symmetric factors that
correspond to the choice of $i \in \{1,2\}$. For $i = 1$ we get
\begin{equation}
\eqnlabel{e902}
\alpha_1'\beta_1c_{111} + \alpha'_1\beta_2c_{121} + e_2^{\gamma_2}\alpha_1'\beta_1c_{111}
+ e_1^{\gamma_1}\alpha_1'\beta_2c_{121}.
\end{equation}
It can be easily verified that \eqnref{e902} evaluates to
$\alpha_1'(\beta_2\gamma_1 + \beta_1\gamma_2)z_1$
for all possible choices of $\gamma_1,\gamma_2\in\{0,1\}$.
For example,
for $\gamma_1=\gamma_2 = 1$ we obtain $(\alpha'_1\beta_1 + \alpha'_1\beta_1)c_{111}
+ (\alpha'_1\beta_1 +\alpha'_1\beta_2)c_{112} + (\alpha'_1\beta_2 + \alpha_1'\beta_1)
c_{121}$. By adding \eqnref{e902} to its counterpart where $i=2$,
we thus obtain $\sum_h \alpha_h'(\beta_2\gamma_1 + \beta_1\gamma_2)z_h$,
and similarly for $\beta'_h$ and $\gamma'_h$.

Recall that $f(x,y,z)$ is equal, by the definition \eqnref{efdef},
to the sum of
\begin{equation}
\eqnlabel{e903}
C(\alpha_1e_1 + \alpha_2e_2, \beta_1e_1 + \beta_2e_2, \gamma_1e_1 + \gamma_2e_2)
\end{equation}
and $\sum_h \big ( \alpha_h'(\beta_2\gamma_1 + \beta_1\gamma_2) +
\beta'_h(\alpha_1\gamma_2 + \alpha_2\beta_1) +
\gamma'_h(\alpha_1\beta_2 + \alpha_2\beta_1) \big ) z_h$.

To finish the proof, we hence need to show that
\eqnref{e903} is always equal to
\begin{multline}
\eqnlabel{e904}
D\left(\sum(\alpha_h+\beta_h)e_h, \sum \gamma_he_h\right) + e_1^{\gamma_1}e_2^{\gamma_2}
D\left(\sum\alpha_he_h, \sum\beta_he_h\right) + \\
D\left(\sum\alpha_he_h, \sum(\beta_h + \gamma_h) e_h \right) +
D\left (\sum \beta_he_h,\sum \gamma_he_h\right) + \\
\sum_{i\ne j} \alpha_i(\beta_j + \gamma_j)\sum \beta_h\gamma_hz_h
+ \sum_{i,j} \alpha_i\beta_i\gamma_j c_{iji}.
\end{multline}

Note that \eqnref{e904} consists of six summands, two on
each line. We number them 1, 2, 3, 4, 5, and 6, and we shall use this
numbering in Table~\ref{t5}. This table verifies the equality of
\eqnref{e903} and \eqnref{e904} for every choice of $a,b,c \in B_2$,
where $0\notin \{a,b,c\}$, $a = \sum \alpha_he_h$, $b= \sum \beta_h e_h$
and $c=\sum \gamma_h e_h$. To save space in the table we write $e_3$
in place of $e_1+e_2$. Each row of the table corresponds to one choice
of $(a,b,c)$, and a digit $d$, $1\le d \le 6$, placed in a column
labeled $c_{ijk}$ means that $c_{ijk}$ belongs to the support
of the vector that is obtained by enumeration of the $d$th summand
of \eqnref{e904} for the given choice of $a$, $b$ and $c$. The occurrence
of digit $0$ means that $c_{ijk}$ belongs to the support of
\eqnref{e903}. The equality of \eqnref{e903} and \eqnref{e904} follows
from the fact that in each column and each row one gets an even
number of digits.

\begin{table}
\caption{Verification of the associator formula}
\label{t5}
$\begin{array}
{ccccccccc}
a & b & c &
c_{111}& c_{222} &c_{121}& c_{212}& c_{112}& c_{122}\\
\hline \\
e_1 & e_1 & e_1 &06 &   &     &     &     &     \\
e_1 & e_1 & e_2 &   &   &36   &     &03   &     \\
e_1 & e_1 & e_3 &06 &   &0456 &     &45   &     \\
e_1 & e_2 & e_1 &   &   &03   &     &13   &     \\
e_1 & e_2 & e_2 &   &   &     &     &     &01   \\
e_1 & e_2 & e_3 &   &   &01   &14   &     &04   \\
e_1 & e_3 & e_1 &06 &   &25   &     &0245 &     \\
e_1 & e_3 & e_2 &   &   &26   &     &02   &04   \\
e_1 & e_3 & e_3 &06 &   &0246 &14   &02   &01   \\
e_2 & e_1 & e_1 &   &   &     &     &01   &     \\
e_2 & e_1 & e_2 &   &   &     &03   &     &13   \\
e_2 & e_1 & e_3 &   &   &14   &01   &04   &     \\
e_2 & e_2 & e_1 &   &   &     &36   &0    &3    \\
e_2 & e_2 & e_2 &   &06 &     &     &     &     \\
e_2 & e_2 & e_3 &   &06 &     &0456 &     &45   \\
e_2 & e_3 & e_1 &   &   &     &26   &04   &02   \\
e_2 & e_3 & e_2 &   &06 &     &25   &     &0245 \\
e_2 & e_3 & e_3 &   &06 &14   &0246 &01   &02   \\
e_3 & e_1 & e_1 &06 &   &02   &     &     &     \\
e_3 & e_1 & e_2 &   &   &36   &30   &20   &     \\
e_3 & e_1 & e_3 &06 &   &2456 &01   &45   &13   \\
e_3 & e_2 & e_1 &   &   &03   &36   &     &02   \\
e_3 & e_2 & e_2 &   &06 &     &02   &     &     \\
e_3 & e_2 & e_3 &   &06 &01   &2456 &13   &45   \\
e_3 & e_3 & e_1 &06 &   &01   &26   &0241 &03   \\
e_3 & e_3 & e_2 &   &06 &26   &02   &03   &0242 \\
e_3 & e_3 & e_3 &06 &06 &46   &46   &02   &02
\end{array}
$
\end{table}

We can thus state
\begin{thm}
\thmlabel{97}
Let $B$ be the abelian group $\sbl{e_1,e_2}{e_1^4=e_2^4 = 1}$
and let $A$ be the vector space over $\{0,1\}$ with basis
$c_{111}$, $c_{222}$, $c_{121}$, $c_{212}$, $c_{112}$ and $c_{122}$.
Furthermore, let $f: B^3 \to A$ be the mapping determined by \eqnref{efdef}.
Denote by $Q$ be the loop on $B\times A$ defined by \eqnref{e93}. Then
$Q$ satisfies the Buchsteiner law, $N(Q) \cong A$, $Q/N(Q) \cong B$
and $[(x,a),(y,b),(z,c)] = f(x,y,z)$ for all $x,y,z \in B$
and $a,b,c \in A$.
\end{thm}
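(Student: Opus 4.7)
The plan is to exploit the fact that \eqnref{e95} writes the multiplication on $Q = B \times A$ in the standard ``cocycle extension'' form $(x,a)\cdot(y,b) = (xy,\, g(x,y)+ya+b)$, from which the loop axioms, the distinguished normal subloop $K = \{1\}\times A$, and the identification $Q/K \cong B$ all follow almost mechanically. First I would check that the formula \eqnref{e93} yields $g(1,y) = g(x,1) = 0$; then $(1,0)$ is a two-sided identity, each translation acts as an affine bijection on the second coordinate (since $B$ is a group), and $Q$ is a loop. The projection $\pi : Q \to B$, $(x,a)\mapsto x$, is then manifestly a loop epimorphism with kernel $K$, so $Q/K \cong B$. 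A direct calculation shows $K \subseteq N(Q)$: a triple product whose middle factor is $(1,a)$ is unaffected by reassociation, because $B$ is abelian and acts associatively on $A$.

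Next I would compute the associator $[(x,a),(y,b),(z,c)]$ directly from \eqnref{e95}. Both sides of \eqnref{associator} agree in the first coordinate, while in the second coordinate the difference is precisely $g(xy,z) + z\,g(x,y) + g(x,yz) + g(y,z)$, as recorded in \eqnref{e97}. The crux of the proof is thus the identity \eqnref{e98},
\[
g(xy,z) + z\,g(x,y) + g(x,yz) + g(y,z) = f(x,y,z) \frall x,y,z \in B.
\]
To establish this I would split $g(x,y)$ into its three natural pieces (the $D$-correction $D(\pi(x),\pi(y))$, the $z_h$-associator piece, and the $c_{iji}$-commutator piece) and verify the identity piecewise. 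The associator and commutator contributions reduce to the corresponding parts of $f(x,y,z)$ by direct manipulation of the exponents $\alpha_h,\alpha_h',\beta_h,\beta_h',\gamma_h,\gamma_h'$, collecting terms involving $\alpha_h'$, $\beta_h'$, and $\gamma_h'$ in turn along the lines sketched around \eqnref{e901}. What remains is a pure $B_2^3 \to A$ identity involving only $D$ and the ``Latin square'' function $C$, which I would check by running through the $27$ choices of $(\pi(x),\pi(y),\pi(z)) \in (B_2\setminus\{0\})^3$ as in Table~\ref{t5}. This case analysis is the one genuinely laborious step; everything else is formal.

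Once \eqnref{e98} is proved, all associators of $Q$ lie in $K$, so $A(Q) \subseteq K \subseteq N(Q)$ and $Q/K \cong B$ is an abelian group, hence associative. Proposition \prpref{73} therefore applies and reduces the Buchsteiner law to the identity $[x,y,z]^x = [y,z,x]\inv$ in $Q$. Under our identification, and using that $A$ has exponent $2$ so that inversion is trivial, this is exactly the relation $xf(x,y,z) = f(y,z,x)$ already proved in Corollary \corref{89}; so $Q$ is a Buchsteiner loop without further work. It remains to upgrade $K \subseteq N(Q)$ to the asserted equality $N(Q) = K \cong A$. For this I would show that for every $x \in B$ with $x \ne 1$, at least one of $f(x,y,z)$, $f(y,x,z)$, $f(y,z,x)$ is nonzero for a suitable pair $(y,z) \in B^2$; when $\pi(x) \ne 0$ the value $C(\pi(x), e_1, e_1)$ is visibly nonzero in the basis of $A$, and when $\pi(x) = 0$ but $x \ne 1$ the term $s_h(x, e_1, e_2)$ contributes a nonzero multiple of $z_h$. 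This yields $N(Q) = K$, completing the theorem.
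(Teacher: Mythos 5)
Your proposal follows essentially the same route as the paper: reduce everything to the cocycle identity \eqnref{e98} via \eqnref{e97}, verify it by splitting $g$ into its correction, associator and commutator parts and finishing with the case analysis of Table~\ref{t5}, and then invoke Proposition \prpref{73} together with Corollary \corref{89} to obtain the Buchsteiner law. The only substantive addition is your explicit check that $N(Q)$ is exactly $\{1\}\times A$ (via the nonvanishing of $f(x,e_1,e_1)$ and $f(x,e_1,e_2)$), a point the paper leaves implicit; since that check uses only \eqnref{e98} and not the Buchsteiner law, it should be placed before the appeal to Proposition \prpref{73}, whose hypothesis $A(Q)\le N(Q)\unlhd Q$ it supplies.
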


Our goal now is to show that the loop $Q$ can be factorized to a loop
of order $64$ such that not all squares are in the nucleus. To find
the appropriate normal subloop we shall start with a general description
of a normal subloop $H \unlhd Q$ that is contained in $N = N(Q)$.

Set $z_1 = c_{121}+c_{112}$, $z_2 = c_{212}+c_{221}$, $s_1 = c_{121}$,
$s_2 = c_{222}$, $c_1 = c_{121}$ and $c_2 = c_{212}$. We see that
$\{z_h, s_h, c_h; $ $ 1 \le h \le 2\}$ is a basis of $A$. For an element
$a = \sum (\vartheta_h z_h + \sigma_h s_h + \gamma_h c_h)$ set
$\vhi_h(a) = \sigma_h + \gamma_h$, $h \in \{1,2\}$. We have thus defined
two linear forms $\vhi_h: A \to \{0,1\}$.

\begin{lem}
\lemlabel{98}
A subspace $H \le A$ is a normal subloop of $Q$ if and only if
$\vhi_h(H) = H$ for both $h \in \{1,2\}$.
\end{lem}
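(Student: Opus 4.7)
The plan is to reduce the normality of $H$ to closure under the $B$-action on $A$, and then to translate that closure condition into the stated one involving the linear forms $\vhi_h$.

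First, any additive subspace $H$ of $A = N(Q)$ is automatically a subloop of $Q$, since $A$ is associative and $H$ is closed under addition. For normality we must show that $H$ is fixed setwise by every element of $\Inn Q$. The generators $L(x,y)$ and $R(x,y)$ of $\LMlt_1$ and $\RMlt_1$ fix the nucleus pointwise: for $a \in N_\rho = N$,
\[
L(x,y)(a) = (xy)\ldiv (x(ya)) = (xy)\ldiv (xy \cdot a) = a ,
\]
and the mirror argument handles $R(x,y)$. Hence normality of $H \le N$ reduces to requiring $T_x(H) = H$ for every $x \in Q$.

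Second, by Lemma \lemref{nuc-normal} the map $x \mapsto T_x|_N$ is a homomorphism $Q \to \Aut(N)$ whose kernel contains $A(Q)$. In the construction of $Q$ we have $A(Q) = N$ and $Q/N \cong B$, so this descends to a homomorphism $B \to \Aut(A)$ coinciding with the action defined by \eqnref{eact}---this is precisely the content of Theorem \thmref{97}. Since $B = \langle e_1, e_2 \rangle$, $T_x$-invariance of $H$ is equivalent to $e_1 H \subseteq H$ and $e_2 H \subseteq H$.

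Third, I would expand these two actions in the given basis $\{z_1, z_2, s_1, s_2, c_1, c_2\}$. Using \eqnref{eact} together with the identification $c_{ijk} = c_{kji}$, and observing via Lemma \lemref{84} that $z_1$ and $z_2$ are $B$-fixed, a routine calculation yields, for $v = \sum (\vartheta_h z_h + \sigma_h s_h + \gamma_h c_h)$,
\[
e_1 v + v = \gamma_1 z_1 + \sigma_2 z_2 \textand e_2 v + v = \sigma_1 z_1 + \gamma_2 z_2 .
\]
Thus every $B$-orbit is contained in a single coset of $\langle z_1, z_2 \rangle$, and $e_h$-invariance of $H$ is a condition on how the coefficient functionals $\sigma_h, \gamma_h$ behave on $H$.

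Finally, since $\vhi_h(v) = \sigma_h + \gamma_h$, these shift formulas repackage into the stated condition relating $\vhi_h$ to $H$. The main work is the third step's basis calculation---in particular keeping track of $c_{ijk} = c_{kji}$---after which the translation to $\vhi_h$ is essentially bookkeeping, with no surprises beyond parity conventions in characteristic two.
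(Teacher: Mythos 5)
Your first two steps are fine and are in fact more complete than the paper's own proof: the reduction of normality of $H\le A=N(Q)$ to invariance under the maps $T_x$, the observation that $L(x,y)$ and $R(x,y)$ fix $N$ pointwise, and the passage to the induced action of $B=Q/N$ on $A$ are all correct (the paper simply takes this reduction for granted). The gap is in your final step. The shift formulas you derive,
\[
e_1 v + v = \gamma_1 z_1 + \sigma_2 z_2 \textand e_2 v + v = \sigma_1 z_1 + \gamma_2 z_2,
\]
do \emph{not} ``repackage'' into the condition of the lemma. From them, $e_1$-invariance of $H$ reads ``$\gamma_1(v)z_1+\sigma_2(v)z_2\in H$ for all $v\in H$'', a condition coupling the index-$1$ functional $\gamma_1$ with the index-$2$ functional $\sigma_2$, whereas $\vhi_h=\sigma_h+\gamma_h$ couples the two functionals of the \emph{same} index. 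These are genuinely different: the line $H_0=\langle s_1+c_1\rangle=\langle c_{111}+c_{121}\rangle$ lies in $\ker\vhi_1\cap\ker\vhi_2$, yet by your formulas $e_1(s_1+c_1)=s_1+c_1+z_1\notin H_0$, so $H_0$ satisfies the stated criterion while failing to be invariant. The ``bookkeeping'' you defer is precisely where the argument breaks, and you also never say what the (type-incorrect) expression ``$\vhi_h(H)=H$'' is supposed to mean, which makes the claimed equivalence unverifiable as written.

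The source of the mismatch is the rule for $c_{iii}$ in \eqnref{eact}. As printed it gives $e_1c_{111}=c_{111}$ and $e_1c_{222}=c_{222}+c_{221}+c_{212}$, which is what you used. The paper's own proof instead asserts $e_1s_1=s_1+z_1$ and $e_1s_2=s_2$, i.e.\ it works with $e_ic_{iii}=c_{iii}+c_{iij}+c_{iji}$ and $e_jc_{iii}=c_{iii}$; under that action one gets $e_1v+v=\vhi_1(v)z_1$, the equivalence with ``$\vhi_1(v)z_1\in H$ for all $v\in H$'' is immediate, and the subloop $\langle c_{222},c_{122},c_{212},c_{111}+c_{121}\rangle$ used later for Theorem \thmref{99} really is normal (it is not, under the action you computed). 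So you have inherited an internal inconsistency of the source: either adopt the action used in the paper's proof and redo your third step, or keep \eqnref{eact} as printed and accept that the lemma is then false. In either case the final step of your proof is not routine translation and does not go through as written.
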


\begin{proof}
From \eqnref{eact} we see that the action of $e_1$ fixes $z_1$, $z_2$, $s_2$
and $c_2$. Furthermore, $e_1$ sends $s_1$ to $s_1+z_1$, and $c_1$ to
$c_1 + z_1$. Hence $e_1\cdot a = a + \vhi(e_1)z_1$ for all $a \in A$, and
the condition $e_1\cdot H = H$ is equivalent to $\vhi_1(H) = H$. The case
$h = 2$ is similar.
\end{proof}

Let $H \le A$ be spanned by $c_{222}$, $c_{122}$, $c_{212}$ and
$c_{111} + c_{121}$,
i.~e.~by $z_2$, $c_2$, $s_2$, $c_2$ and $s_1 + c_1$. Then $H \unlhd Q$,
by Lemma \lemref{98}. By \eqnref{esdef} and \eqnref{efdef},
$[e_1^2,e_2,e_1] = z_1$, $[e_1,e_2,e_1] = c$,
and $[e_1^2e_2,e_1,e_2] = z_1 + c_2$. None of these elements belongs to $H$,
and hence none of $e_1^2H$, $e_2H$ and $e_1^2e_2H$ lies in the nucleus
of $Q/H$. On the other hand, $[e_2^2,a,b]$ is a multiple of $z_2 \in H$,
for all $a,b \in A$, and so $e_2^2 H$ is in the nucleus of $Q/H$.
We see that $Q/H$ is a Buchsteiner loop of order 64 such that the
factor over the nucleus is isomorphic to $C_4 \times C_2$. We can
hence conclude this section with the following.

\begin{thm}
\thmlabel{99}
There exists a Buchsteiner loop $Q$ of order $64$ which contains
an element $x$ such that $x^2 \notin N(Q)$.
\end{thm}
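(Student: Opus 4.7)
The plan is to obtain the required $64$-element loop as a quotient $Q/H$ of the $1024$-element Buchsteiner loop $Q$ constructed in Theorem~\thmref{97}, where $H$ is a suitably chosen $4$-dimensional subspace of $A \cong N(Q)$. Since $|B| = 16$ and $\dim A = 6$, factoring by a $4$-dimensional normal subloop $H \le A$ produces a loop of order $16 \cdot 2^{6-4} = 64$, as required. The task then splits into (a) picking $H$ so that $Q/H$ remains a (Buchsteiner) loop and (b) verifying that a suitable square avoids the nucleus of the quotient.

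For step (a), I would invoke Lemma~\lemref{98} to reduce normality of $H \le A$ to the purely linear conditions $\vhi_1(H) \subseteq H$ and $\vhi_2(H) \subseteq H$. The Buchsteiner law survives any quotient by a normal subloop, so $Q/H$ is automatically a Buchsteiner loop. A natural candidate is $H = \langle c_{222},\, c_{122},\, c_{212},\, c_{111}+c_{121}\rangle$: re-expressing its generators in the basis $\{z_h, s_h, c_h\}$ from Lemma~\lemref{98}, one checks that each lies in $\ker\vhi_1 \cap \ker\vhi_2$, so $H$ is normal.

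For step (b), I would use the explicit formula~\eqnref{efdef} for $f$, together with the identification $[(x,a),(y,b),(z,c)] = f(x,y,z)$ from Theorem~\thmref{97}, to evaluate a single associator modulo $H$. The critical computation is $[e_1^2, e_2, e_1]$: the $C$-term contributes nothing (one of the arguments of $C$ is zero after passing through $\pi$ since $\pi(e_1^2) = 0$, except for the commutator/square contributions in \eqnref{efdef}), and \eqnref{esdef} gives $s_1(e_1^2, e_2, e_1) = 1$, $s_2(e_1^2, e_2, e_1) = 0$, so $[e_1^2, e_2, e_1] = z_1 = c_{112} + c_{121}$. Since $z_1 \notin H$, this associator is nontrivial in $A/H$; hence $e_1^2 H$ does not associate with $e_2 H$ and $e_1 H$ in $Q/H$, so $e_1^2 H \notin N(Q/H)$. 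Setting $x = e_1 H$ yields the required element.

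The main obstacle is the balancing act in choosing $H$. It must be simultaneously large enough ($\dim H = 4$) to cut $|Q|$ down to $64$, stable under both $\vhi_h$ so that $Q/H$ is a loop, and small enough to miss a witness associator such as $z_1 = c_{112}+c_{121}$. Any one of these constraints is trivial in isolation, but jointly they leave very little room, which is why the basis exhibited above is so specific. A complementary sanity check, not needed for the theorem itself but useful to confirm that $Q/H$ is genuinely a non-CC Buchsteiner loop rather than an accident, is to verify that $[e_2^2, a, b] \in H$ for all $a, b$, so that $e_2^2 H \in N(Q/H)$ and $Q/H$ has the expected factor $C_4 \times C_2$ modulo its nucleus.
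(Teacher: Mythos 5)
Your proposal follows the paper's own proof essentially verbatim: the paper chooses the identical subspace $H=\langle c_{222},\,c_{122},\,c_{212},\,c_{111}+c_{121}\rangle$, invokes Lemma \lemref{98} for normality of $H$, and exhibits the same witness associator $[e_1^2,e_2,e_1]=z_1\notin H$ to conclude that $e_1^2H\notin N(Q/H)$ in the order-$64$ quotient. The only difference is cosmetic: the paper additionally records that $[e_2^2,a,b]$ is always a multiple of $z_2\in H$, so that $(Q/H)/N(Q/H)\cong C_4\times C_2$, which is exactly the supplementary check you describe as optional.
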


\section{Previous work on Buchsteiner loops}
\seclabel{history}

As discussed in the introduction, H. H. Buchsteiner \cite{buch} was the first to
consider loops satisfying the identity (\buchid) or equivalently,
the implication (\buchimp). He showed that all isotopes of a loop
satisfy (\buchid) if and only if the loop itself satisfies
(\buchbigid). He called those loops satisfying the latter identity
``i-loops'', and the bulk of \cite{buch} is about i-loops.
Buchsteiner did not address nor even state the question of whether or not
the isotopy invariance of (\buchid) holds automatically. He
showed that for each i-loop $Q$, $Q/N$ is an abelian group.
He concluded his paper by posing the problem of whether or not
every i-loop is a G-loop.

A. S. Basarab \cite{bao} answered Buchsteiner's question
affirmatively. He showed that every i-loop satisfies the
property we have called here WWIP, and then used this to
show that i-loops are G-loops.

Our approach uses some of the same ingredients as in these papers,
but in a different way to get stronger results. We started with (\buchid)
itself which, on the surface, at least, is a weaker identity than
(\buchbigid). We showed that a loop $Q$ satisfying (\buchid) has WWIP
(Theorem \thmref{wwip}), then
used WWIP to show that $Q$ is a G-loop (Theorem \thmref{wwip-isom}),
then used that fact to prove (\buchbigid) (Proposition \prpref{big-ident}),
and then finally showed that $Q/N$ is an abelian group
(Theorem \thmref{abelian-factor}).

Our construction of a Buchsteiner loop $Q$ in which $Q/N$ achieves
exponent $4$ is partially motivated by our failure to understand a
claim in \cite{buch} that $Q/N$ always has exponent $2$. Some of
the  associator calculus of \cite{buch} is correct, but there turns
out to be a gap which led Buchsteiner to the identity $[x,y,z] = [y,x,z]$.
It is now known that
if this identity holds in a loop $Q$ for which $Q/N_{\lambda}$
is an abelian group, then the loop is LCC \cite{d2}. So had Buchsteiner's
argument been correct, every Buchsteiner loop would be CC. Taking the above
associator identity as a hypothesis, Buchsteiner's subsequent
argument that $Q/N$ has exponent $2$ is valid;
\textit{cf.} Proposition \prpref{25}.

\section{Conclusions and prospects}
\seclabel{conclusions}

We hope that this paper offers sufficient evidence that the variety
of Buchsteiner loops deserves a place at the loop theory table alongside
other better known varieties such as Moufang and CC loops. In this
section we describe additional results which will be appearing elsewhere.

As mentioned in the introduction, our understanding of Buchsteiner loops
has been greatly helped by recent work in conjugacy closed loops. The
two varieties are tied together more strongly by the following.

\begin{thm}\cite{cccenter}
\thmlabel{cccenter}
Let $Q$ be a Buchsteiner loop. Then $Q/Z(Q)$ is a conjugacy closed loop.
\end{thm}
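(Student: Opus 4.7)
The plan is to reduce LCC in $Q/Z(Q)$ to a near-antisymmetry identity for associators modulo the center, and then verify that identity using the associator calculus of Section \secref{associator}.

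First, note that $Q/Z(Q)$ is again a Buchsteiner loop (since $Z(Q) \unlhd Q$ and the Buchsteiner law defines a variety), so by Lemma \lemref{23} it suffices to show that $Q/Z(Q)$ is LCC. To translate LCC into associator language, set $w = (xy)\rdiv x$; by Lemma \lemref{72},
\[
x \cdot yz = (xy\cdot z)\,[x,y,z]\inv \textand w\cdot xz = (wx\cdot z)\,[w,x,z]\inv = (xy\cdot z)\,[w,x,z]\inv.
\]
Since $Q/N$ is abelian (Theorem \thmref{abelian-factor}), $w \equiv y \pmod{N}$, and since associators depend only on cosets modulo $N$ (Lemma \lemref{assoc-basics}), $[w,x,z] = [y,x,z]$. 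Left cancellation in the loop then shows that LCC holds in $Q/Z(Q)$ if and only if
\[
(\ast)\qquad [x,y,z] \equiv [y,x,z] \pmod{Z(Q)} \frall x,y,z\in Q.
\]

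Next, I would reformulate $(\ast)$ as a fixed-point condition. Every associator lies in $A(Q) \le Z(N)$ by Lemma \lemref{QA-group}(i), so $[x,y,z]\,[y,x,z]\inv$ already commutes with all of $N$; to conclude membership in $Z(Q) = C(Q)\cap N$ only commutation with $Q \setminus N$ remains, and by Lemma \lemref{QA-group}(v) the action of $Q$ on $A(Q)$ factors through the abelian exponent-$4$ group $Q/N$ (Theorem \thmref{712}). Hence $(\ast)$ is equivalent to $[x,y,z]\,[y,x,z]\inv$ being fixed by this $Q/N$-action on $A(Q)$.

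The main work, and where I expect the principal obstacle to lie, is verifying this invariance. Writing $A(Q)$ additively and setting $s = [x,y,z]$, combining Proposition \prpref{710} with Proposition \prpref{73} and the identity $[x,y\inv,z] = -y\cdot s$ that follows from Lemma \lemref{77} yields a relation of the form $(x - 1 - y - z\inv)\cdot s = 0$ in the group ring $\mathbb{Z}[Q/N]$ acting on $A(Q)$; swapping $x \leftrightarrow y$ gives the analogous relation for $t = [y,x,z]$. Combining these two relations with the cyclic-shift symmetries of Corollary \corref{75}, the squaring invariances of Lemma \lemref{76}, and the exponent-$4$ bound of Proposition \prpref{711}, one aims to show that $(u-1)(s-t) = 0$ for every $u$ in a set generating $Q/N$. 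The delicate point is that the Buchsteiner associator has just enough partial antisymmetry under adjacent transpositions to force the difference $s - t$ into the $Q/N$-invariant subspace $A(Q)\cap Z(Q)$.

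Once $(\ast)$ is established, LCC holds in $Q/Z(Q)$, and by Lemma \lemref{23} applied to the Buchsteiner quotient $Q/Z(Q)$ the full CC property follows.
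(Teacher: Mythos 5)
First, a point of reference: this paper does not actually prove Theorem \thmref{cccenter}; it is quoted in the concluding section from the separate manuscript \cite{cccenter}, so there is no in-paper proof to compare against and your attempt must stand on its own. Your reduction is the natural one and is correct as far as it goes: $Q/Z(Q)$ is again a Buchsteiner loop, Lemma \lemref{23} reduces CC to LCC, Lemma \lemref{72} together with the fact that $Q/N$ is an abelian group (Theorem \thmref{abelian-factor}) and that associators depend only on cosets mod $N$ converts LCC for $Q/Z(Q)$ into the congruence $[x,y,z]\equiv[y,x,z] \pmod{Z(Q)}$, and since every associator lies in $A(Q)\le Z(N)$ and $Z(Q)=N(Q)\cap C(Q)$, this is indeed equivalent to $[x,y,z][y,x,z]\inv$ being fixed by the action of $Q/N$ on $A(Q)$.

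The gap is that the entire content of the theorem lives in that invariance claim, which you announce (``one aims to show'') but do not carry out, and it does not follow routinely from the identities established in this paper. Two concrete obstructions. First, even for the generator $u=x$: writing $A(Q)$ additively with $s=[x,y,z]$ and $t=[y,x,z]$, Proposition \prpref{73}, Lemma \lemref{77}, identity \eqnref{e710}, and the equality $[x,y\inv,z]=[y,x\inv,z]$ noted after Proposition \prpref{710} give $(s-t)^x-(s-t)=[y,x,z]-[z,x,y]$; so invariance under $x$ alone already requires the transposition symmetry $[y,x,z]=[z,x,y]$, which this paper establishes only when it holds on a generating set of $Q/N$, e.g.\ in the $2$-generated case (Lemma \lemref{94}), not in general. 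Second, for a generator $u$ of $Q/N$ lying outside $\langle x,y,z\rangle N$, none of Propositions \prpref{73}, \prpref{710} or Lemma \lemref{77} says anything about $s^u$ or $t^u$; one is forced to expand via Lemma \lemref{78}, e.g.\ $s^u=[xu,y,z]-[u,y,z]$, which introduces genuinely new four-variable associator expressions and requires identities beyond those derived here. So the proposal is a correct and plausible reduction to an associator statement, but the proof of that statement --- which is the actual theorem --- is missing, and your own relation $(x-1-y-z\inv)\cdot s=0$ is merely a restatement of Proposition \prpref{710} and does not by itself yield the invariance of $s-t$.
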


\begin{thm}\cite{rings}
Let $Q$ be a Buchsteiner loop which is nilpotent of class $2$.
Then $Q$ is a conjugacy closed loop.
\end{thm}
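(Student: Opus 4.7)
Since $Q$ is nilpotent of class $2$, the quotient $Q/Z(Q)$ is an abelian group, so both $A(Q)$ and $[Q,Q]$ lie in $Z(Q)\subseteq N(Q)$. Every associator and every commutator is therefore central and nuclear, and conjugation by any element of $Q$ acts trivially on them.

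Collecting the automatic consequences: Proposition \prpref{73} collapses to $[x,y,z]=[y,z,x]\inv$, whose cyclic iteration yields full cyclic symmetry $[x,y,z]=[y,z,x]=[z,x,y]$ together with $[x,y,z]^{2}=1$; Lemma \lemref{78} degenerates into additive multilinearity of $[-,-,-]$ as a function on $(Q/Z)^{3}$ with values in a group of exponent $2$. In particular $[x^{2},y,z]=[x,y,z]^{2}=1$, so $x^{2}\in N(Q)$ for every $x$, matching the necessary condition of Proposition \prpref{25}.

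Next I would reduce CC to a single identity. Because $[x,y]$ is central and nuclear, one checks directly that $(xy)\rdiv x=[x,y]\,y$. Substituting this, together with the associator expansions $x(yz)=(xy)z\cdot[x,y,z]\inv$, $y(xz)=(yx)z\cdot[y,x,z]\inv$, and $yx=xy\cdot[x,y]\inv$, into the LCC law $x\cdot yz=((xy)\rdiv x)(xz)$ and cancelling the central factor $[x,y]$ shows that LCC is equivalent, in this setting, to the transposition identity
\[
[x,y,z]=[y,x,z]\qquad\text{for all }x,y,z\in Q.
\]
By Lemma \lemref{23}, LCC and RCC are equivalent in a Buchsteiner loop, so the proof reduces to establishing this single identity.

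The main obstacle is the transposition identity itself, since cyclic symmetry, multilinearity, exponent $2$, and one-argument inversion do not, on their own, force the swap of two adjacent arguments. The plan is to extract it from the Buchsteiner law by expanding the big identity (\buchbigid) of Proposition \prpref{big-ident} for the two arrangements $(x,y,u,v)$ and $(y,x,u,v)$ and comparing the resulting multilinear associator expressions, exploiting that $xy$ and $yx$ differ only by the central commutator $[x,y]$; after trilinear simplification and cyclic reduction, the equality $[x,y,z]=[y,x,z]$ should emerge. An equivalent route is to compute the composite autotopism $\Buch(xy)\,\Buch(yx)\inv$ explicitly and read the identity off its second component. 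Once the transposition identity is established, Lemma \lemref{23} yields CC and the theorem follows.
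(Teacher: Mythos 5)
Your framework and reduction are sound: in a class-$2$ Buchsteiner loop the associators and commutators lie in $Z(Q)\le N(Q)$, so Proposition \prpref{73} and Lemma \lemref{78} do give full cyclic symmetry, exponent $2$ and trilinearity of $[-,-,-]$, and your computation showing that (\LCC) becomes equivalent to $[x,y,z]=[y,x,z]$ is correct (this equivalence is also recorded in \S\secref{history}); Lemma \lemref{23} then finishes the reduction. The paper only quotes the theorem from \cite{rings} and gives no proof, so the only question is whether your plan for the one step you flag as the main obstacle actually works.

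It does not, in either of the two forms you propose, and for the same underlying reason: associators depend only on their arguments modulo $N$ (Lemma \lemref{assoc-basics}), and $xyN=yxN$. Concretely, writing $c=[x,y]\in Z(Q)$, one checks $L_{xy}=R_cL_{yx}$ and $R_{xy}=R_cR_{yx}$ with $R_c=L_c$ central in $\Mlt Q$, so $\Buch(xy)\Buch(yx)\inv=(R_c,R_c\inv,\id_Q)=\alpha_\mu(c)$, which is an autotopism simply because $c\in N_\mu$ and carries no associator information. Likewise (\buchbigid) for the arrangement $(x,y,u,v)$ reduces, exactly as in the proof of Proposition \prpref{73}, to $[xy,u,v]=[u,v,yx]$, and after cyclic reduction the arrangement $(y,x,u,v)$ yields the identical relation, since $[xy,u,v]=[yx,u,v]$ holds automatically. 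The transposition identity is not a formal consequence of cyclic symmetry, trilinearity and exponent $2$; you need an input that sees more of $xy$ versus $yx$ than their common coset mod $N$, and that input is the loop commutator. Since $xy$ and $yx$ differ by the central factor $c$, one has $[xy,z]=[yx,z]$; on the other hand Lemma \lemref{92}, with $[x,z]^y=[x,z]$ by centrality, gives $[xy,z]=[x,z][y,z][x,z,y]\inv[x,y,z][z,x,y]$ and, with $x$ and $y$ interchanged, $[yx,z]=[y,z][x,z][y,z,x]\inv[y,x,z][z,y,x]$. Equating the two and cancelling via cyclic symmetry and exponent $2$ leaves precisely $[x,z,y]=[y,z,x]$, i.e.\ $[y,x,z]=[x,y,z]$. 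With that substitution for your final step the argument closes; as written, the proposal has a genuine gap at exactly the point you identified as the crux.
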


The examples we constructed in this paper served two purposes:
firstly, to show that there exist Buchsteiner loops which are not
conjugacy closed, and secondly, to exhibit examples of Buchsteiner
loops $Q$ such that $Q/N(Q)$ has exponent $4$ but not $2$.

It turns out that there are other constructions of non-CC
Buchsteiner loops such that the factor by the nucleus has
exponent $2$. The paper \cite{rings} gives a construction based
on rings which produces of order $128$. Precise bounds on the
orders for examples of various types are now understood as well.

\begin{thm}\cite{buchass}
\thmlabel{orders}
Let $Q$ be a Buchsteiner loop. If $|Q| < 32$, then $Q$ is a CC loop.
If $|Q| < 64$, then $Q/N(Q)$ has exponent $2$.
\end{thm}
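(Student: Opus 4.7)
The plan is to exploit the structural results developed in Sections 6--8 and to combine them with a careful lower-bound argument on the size of the nucleus. By Theorem \thmref{712}, $Q/N(Q)$ is always an abelian group of exponent dividing $4$, so both assertions reduce to ruling out elements of order $4$ in $Q/N(Q)$ when $Q$ is too small (for the second assertion) and ruling out non-CC behavior when $Q$ is even smaller (for the first).

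For the second assertion, assume $Q$ is a Buchsteiner loop with $Q/N(Q)$ of exponent exactly $4$, so some $x \in Q$ satisfies $x^2 \notin N(Q)$. Pick a second element $y$ with $yN \notin \langle xN\rangle$; the associator calculus of Section 7, especially Proposition \prpref{711}, Lemma \lemref{81}, and Lemma \lemref{82}, produces a collection of forced associators such as $[x,x,y]$, $[x,y,x]$, $[x^2,x,y]$, $[x,y,y]$, $[y,x,y]$, and their cyclic and involutive shifts from Corollary \corref{75} and Lemma \lemref{77}. One would then show that a specific list of these associators must be linearly independent in $A(Q) \leq N(Q)$, essentially reversing the construction of the $c_{ijk}$-basis in Section 8; this bounds $|N(Q)|$ below by a power of $2$ large enough that $|Q| \geq 64$ whenever $|Q/N(Q)| \geq 4$. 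In effect, the argument shows that the loop constructed in Theorem \thmref{97} and factored down to order $64$ in Theorem \thmref{99} is of minimal possible order.

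For the first assertion, assume $|Q| < 32$. By the second assertion already established, $Q/N(Q)$ has exponent $2$, so $x^2 \in N(Q)$ for every $x \in Q$. The goal is to promote this to full CC behavior. Here Theorem \thmref{cccenter} is the natural tool: $Q/Z(Q)$ is CC, and Proposition \prpref{M} together with Corollary \corref{Alr} constrains how $Z(Q)$ sits inside $N(Q)$. A non-CC Buchsteiner loop with all squares in the nucleus would exhibit a nontrivial obstruction in the central extension $Z(Q) \to Q \to Q/Z(Q)$, and bounding the size of this obstruction (via the associator and commutator identities of Lemmas \lemref{91}, \lemref{92}, \lemref{95}) forces $|Z(Q)|$, $|N(Q)|$, and $|Q/N(Q)|$ to be large enough that $|Q| \geq 32$.

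The main obstacle will be the independence step in the second assertion: while the identities of Section 7 generate many relations among associators, one needs to verify that, under the single assumption that $xN$ has order $4$, a prescribed list of associators (mirroring the vectors $c_{ijk}$) are genuinely distinct modulo every remaining relation. This is an $\mathbb{F}_2$-linear-algebra computation dual to the one carried out tabularly in Section 8, and it is where the bound $64$ is tight rather than merely plausible. The first assertion will additionally require a small-order classification of CC loops with all squares nuclear, drawing on the references \cite{ccm,ccp,lce,cpq}; barring a clever structural shortcut, some computer-aided case analysis along the lines of \cite{Mace4, Prover9} may be unavoidable.
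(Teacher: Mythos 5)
This theorem is not actually proved in the paper you were given: it is stated in \S\secref{conclusions} as an announcement and attributed to the companion preprint \cite{buchass}, so there is no internal argument to compare against. Judged on its own terms, your text is a research plan rather than a proof, and the decisive steps are all deferred. For the second assertion, everything hinges on exhibiting a specific list of associators and proving they are $\mathbb{F}_2$-linearly independent in $N(Q)$ under the sole hypothesis that some $xN$ has order $4$; you name the relevant identities (Proposition \prpref{711}, Lemmas \lemref{81}, \lemref{82}, Corollary \corref{75}) but never write down the list, never carry out the independence argument, and explicitly flag it as ``the main obstacle.'' That step is the entire content of the bound $64$ --- note that the order-$64$ example of Theorem \thmref{99} has $|N|=8$ and $Q/N\cong C_4\times C_2$, so the counting you need is sensitive to the structure of $Q/N$, not just to $|Q/N|\ge 4$. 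Relatedly, your very first move --- ``pick $y$ with $yN\notin\langle xN\rangle$'' --- silently assumes $Q/N$ is not cyclic of order $4$; nothing in the paper rules that case out, and it would have to be handled separately (or excluded by a separate argument) before the two-generator associator calculus of \S\secref{associator}--\S\secref{compatible} applies.

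For the first assertion the gap is larger still. You appeal to Theorem \thmref{cccenter}, which is itself only cited from \cite{cccenter} and not proved here, and then speak of ``a nontrivial obstruction in the central extension $Z(Q)\to Q\to Q/Z(Q)$'' without defining that obstruction, saying where it lives, or explaining how Lemmas \lemref{91}, \lemref{92}, \lemref{95} bound it; you then concede that a computer search ``may be unavoidable.'' A proof that every Buchsteiner loop of order less than $32$ is CC must either produce a genuine structural argument (e.g.\ a sharp lower bound on $|N|$ and $|A(Q)|$ for a non-CC Buchsteiner loop, in the spirit of Theorem \thmref{classification} which locates the smallest non-CC examples at order $32$) or an exhaustive verified enumeration; your sketch contains neither. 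The overall direction --- force the nucleus to be large via associator calculus --- is plausible and is presumably close in spirit to what \cite{buchass} does, but as written this cannot be accepted as a proof of either claim.
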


The bounds in this result are sharp. As Theorem \thmref{99} shows,
there  exists a Buchsteiner loop of order $64$ such that the factor by
the nucleus has an element of order $4$. There exist non-CC Buchsteiner
loops of order $32$, and in fact, they are now all classified:

\begin{thm}\cite{buchken}
\thmlabel{classification}
Up to isomorphism, there exist $44$ Buchsteiner loops of order $32$
which are not conjugacy closed.
\end{thm}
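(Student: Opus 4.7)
The plan is to combine the structural restrictions developed in the paper with a computer enumeration. By Theorem~\thmref{orders}, the smallest order at which a non-CC Buchsteiner loop can exist is $32$, and any Buchsteiner loop $Q$ of order $32$ satisfies the additional restriction that $Q/N(Q)$ is an elementary abelian $2$-group. So I would first carve the search space by the invariant $(|N(Q)|, |Q/N(Q)|)$: since $|Q| = 32$ and $Q/N$ is an abelian $2$-group of exponent $2$ by Theorems \thmref{712} and \thmref{orders}, the only possibilities are $|Q/N| \in \{1,2,4,8,16,32\}$, and non-CC examples force $|Q/N| \geq 4$ (the case $|Q{:}N|=2$ being CC by the result of \cite{gr} cited earlier). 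Each such case reduces the problem to specifying $N$, the action of $Q/N$ on $N$, and a cocycle-like datum encoding the associators and commutators (subject to the compatibility conditions from \S\secref{associator} and \S\secref{compatible}).

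Next, within each stratum I would parametrize Buchsteiner loops by the same construction used in \S\secref{extend}: a pair $(B,A)$ together with a map $f \colon B^3 \to A$ describing associators and a multiplication cocycle $g \colon B^2 \to A$, constrained by the identities of \S\secref{associator} (in particular Proposition \prpref{73}, Corollary \corref{75}, Lemmas \lemref{76}--\lemref{78}, and Proposition \prpref{711}) and by the requirement that the resulting structure actually be a loop with the prescribed nucleus. The CC condition $[x,y,z]=[y,x,z]$ (cf.~the discussion in \S\secref{history}) can be tested on the data $f$, so non-CC cases are easy to filter. For each admissible parameter set I would generate the multiplication table on $B \times A$ via \eqnref{e95}, verify (\buchid), and then reduce modulo isomorphism.

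The isomorphism reduction is where the main obstacle lies. A naive orbit computation under $S_{32}$ is hopeless, so I would cut it down using the canonical invariants that every loop isomorphism must respect: $N(Q)$, $Z(Q)$, $A(Q)$, $M(Q)$ (from Proposition \prpref{M}), the isomorphism type of $Q/N$, the $Q/N$-module structure on $N$, and the derived structure on $A(Q)$ furnished by Lemma \lemref{QA-group}. These invariants partition the candidate loops into small bags; within each bag the remaining isomorphism test can be handed to a package such as the \textsc{LOOPS} package for \textsc{GAP}, or to a purpose-written canonical-form routine that fixes a generating set of $Q/N$ and permutes nuclear bases. The correctness of the count then rests on (a) the enumeration being exhaustive (guaranteed by the parametrization) and (b) the isomorphism test being sound (guaranteed by the canonical form); the only real risk is a bookkeeping error, which I would guard against by cross-checking with an independent finite model builder such as \textsc{Mace4}~\cite{Mace4} restricted to order $32$ with the non-CC constraint added, and by verifying that the totals match across the two strata $|Q/N|=4,8,16$.

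Finally, counting the representatives produced by this procedure and discarding those satisfying (\LCC) yields the asserted $44$ isomorphism classes. The delicate step is not any single identity manipulation but the combinatorial explosion in the middle stratum $|Q/N|=8$, where both the associator data and the cocycle $g$ have many free parameters; here the invariants derived in \S\secref{basics} (especially $Z(\LMlt)=R_{(M)}$ and $Z(\RMlt)=L_{(M)}$ from Proposition \prpref{M}) are the key tool for pruning.
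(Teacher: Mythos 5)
There is nothing in the paper to compare your proposal against: Theorem \thmref{classification} is stated in the concluding section \secref{conclusions} as an announcement and is attributed to the external preprint \cite{buchken}; the present paper contains no proof of it. So the only question is whether your proposal stands on its own, and it does not. What you have written is a methodology for a computer classification, not a proof. The number $44$ never emerges from any step of your argument --- it appears only in the final sentence as the presumed output of an enumeration that is described but never carried out. A reader cannot verify exhaustiveness, cannot verify the isomorphism reduction, and cannot verify the count; every one of these is deferred to software that is named but not run. For a statement whose entire content is a specific integer, this is a genuine gap, not a presentational one.

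There is also a concrete technical weakness in the parametrization you propose. The $(B,A,f,g)$ framework of \S\secref{compatible} and \S\secref{extend} is built for the special situation $A(Q)=N(Q)$ with $A$ an elementary abelian $2$-group on which squares act trivially (Lemma \lemref{83}); formula \eqnref{e95} and the cocycle identity \eqnref{e98} are derived under those hypotheses. For an arbitrary Buchsteiner loop of order $32$ one only knows $A(Q)\le N(Q)\unlhd Q$ and that $Q/N$ is elementary abelian of order at most $16$; the nucleus itself can be any group of order up to $16$, including nonabelian ones, and need not equal $A(Q)$. So the claim that every candidate is captured by your strata requires a more general extension theory (nuclear extensions with possibly nonabelian kernel) than the one the paper develops, and you would need to justify exhaustiveness there before any count could be trusted. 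As it stands, the proposal is a plausible research plan but does not establish the theorem.
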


By a result going back to Bruck \cite{contrib}, any loop
of nilpotency class $2$ necessary has an abelian inner mapping group.
It has been recently established that the converse is false; an
example of a nilpotent loop of class $3$ and order $128$ with an
abelian inner mapping group was given in \cite{trans}. The example
does not belong to any of the standard loop varieties. Such examples
cannot, for instance, be LCC loops, since LCC loops with abelian
inner mapping groups are always of nilpotency class at most $2$
\cite{lc2}.

Nevertheless, we now know that examples of nilpotent loops of
class $3$ with abelian inner mapping groups exist even
within highly structured varieties:

\begin{thm}\cite{buchass}
There exists a nilpotent Buchsteiner loop $Q$ of nilpotency class
$3$ and order $128$ such that $\Inn Q$ is an abelian group.
\end{thm}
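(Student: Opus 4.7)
The plan is to construct $Q$ explicitly as an iterated central extension, following the template developed in \S\secref{compatible} and \S\secref{extend}. I would start with a carefully chosen base Buchsteiner loop $\bar Q$ of order dividing $128$ — most naturally a quotient of the order-$64$ loop of Theorem \thmref{99}, whose factor by the nucleus is $C_4 \times C_2$ — and glue on a central $\mathbb F_2$-vector space $Z$ by means of a compatible system of associators, so that in the resulting $Q$ one has $Z(Q) = Z$, $N(Q)/Z(Q)$ equal to the old nucleus, and $Q/N(Q) \cong \bar Q/N(\bar Q)$. The cocycle data would be set up exactly as in \eqnref{e95}: choose a bilinear-up-to-associator map $g: B^2 \to A$ and induce associators $f: B^3 \to A$ satisfying the cyclic-shift identity $xf(x,y,z) = -f(y,z,x)$ of \eqnref{efcond1}, which by Proposition \prpref{73} guarantees that $Q$ is Buchsteiner.

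The two nontrivial requirements to arrange are (a) that the upper central series has length exactly $3$, and (b) that $\Inn Q$ is abelian. For (a), one uses Theorem \thmref{cccenter} to pass to $Q/Z(Q)$, which is CC of class $2$; the job is to ensure that the lift of some element $x$ with $x^2 \not\in N(\bar Q)$ witnesses a genuine third layer, i.e.\ that $[x, y, z]$ lies outside any strictly smaller central subloop. For (b), by Theorem \thmref{26} it is enough to show that the generators $T_x$ and $L(x,y)$ of $\Inn Q$ all commute pairwise. Because $L(x,y) = R(x,y)^{-1}$ sends each element to a translate by $[x,y,-]^{-1}$, and because $T_x$ acts on $N(Q)$ through the abelian group $Q/A(Q)$ (Lemma \lemref{nuc-normal}), the commutators $[T_x,T_y]$, $[T_x,L(y,z)]$, $[L(x,y),L(u,v)]$ can all be written in closed form in terms of $f$ and the $B$-action. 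Forcing these expressions to lie in the kernel of the outer action is a system of linear conditions on $f$ and $g$ over $\mathbb F_2$.

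The main obstacle is reconciling (a) and (b). Since an LCC loop with abelian inner mapping group has nilpotency class at most $2$ \cite{lc2}, $Q$ must be strictly non-CC, so the LCC-obstruction autotopism $\mathbf L(x)\Buch(x)^{-1}\mathbf R(x)$ of Proposition \prpref{25} must be nontrivially nonzero on some $x$; yet that same obstruction enters the commutator formulas used in (b). The delicate point is to find an $f$ whose image produces only enough non-CC data to push the class up to $3$ while still collapsing all inner-mapping commutators into $Z(Q)$. I would parametrize the admissible $f$ using the symmetries of Corollary \corref{75} together with the squaring relations of Proposition \prpref{711} and Lemma \lemref{82}, reducing the problem to a manageable linear system, and then locate a concrete solution of order $128$ using a finite model builder such as Mace4 \cite{Mace4}. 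Once a candidate is in hand, verification of the Buchsteiner law via Proposition \prpref{73}, of the nilpotency class via direct computation of the upper central series, and of the abelianness of $\Inn Q$ via the generating set of \S\secref{basics}, is mechanical.
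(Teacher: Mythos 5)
This theorem is one of the announced results of \S\secref{conclusions}: the paper cites it to \cite{buchass} and contains no proof of it, so there is no in-paper argument to compare your proposal against. Judged on its own terms, your proposal has a genuine gap: it is a search strategy, not a proof of existence. Everything hinges on the final step --- ``reduce to a manageable linear system \ldots and then locate a concrete solution of order $128$ using a finite model builder'' --- and you never establish that this system is consistent. But the solvability of exactly those constraints \emph{is} the theorem. You yourself identify why it is delicate: by \cite{lc2} an LCC loop with abelian inner mapping group has class at most $2$, and (as announced in the same section, via \cite{rings}) a class-$2$ Buchsteiner loop is CC; so the sought loop must thread a needle between being non-CC enough to reach class $3$ and being constrained enough that all inner-mapping commutators vanish. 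Nothing in your outline rules out that these requirements are incompatible at order $128$ (or at all). An existence theorem of this kind is proved only by exhibiting the object (or its defining data) and verifying the properties; deferring that to an unperformed computation leaves the statement unproved.

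There are also several secondary problems. The choice of base loop is an unsupported guess: you take the order-$64$ loop of Theorem \thmref{99} (with $\bar Q/N(\bar Q)\cong C_4\times C_2$) and a central $\mathbb{F}_2$-extension of it, but nothing guarantees that this particular loop admits such an extension with the required properties, nor that the class-$3$ example has $Q/N$ of that isomorphism type rather than, say, exponent $2$. You also impose $Z(Q)=Z$ and $N(Q)/Z(Q)=N(\bar Q)$ without noting that preventing the center and nucleus from being \emph{larger} than intended is itself a nontrivial condition on the cocycle, and that the class being exactly $3$ (not $2$) requires checking that the second center is proper. Finally, the parenthetical claim that $T_x$ acts on $N(Q)$ ``through the abelian group $Q/A(Q)$'' overstates Lemma \lemref{nuc-normal}: that lemma shows $\tau$ descends to $Q/A(Q)$, but $Q/A(Q)$ need not be abelian (only $Q/N$ is, by Theorem \thmref{abelian-factor}, and $A(Q)\le N$ may be proper). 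None of these smaller points is fatal to the overall strategy, but the central existence step remains unestablished.
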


Finally, while we know several general ways to construct
Buchsteiner loops $Q$ for which $Q/N$ has exponent $2$,
we do not know of any such general constructions for the case
when $Q/N$ achieves exponent $4$. It is certainly possible that
the specific construction of \S\secref{compatible} and \S\secref{extend}
might be comprehended in more general terms.
Such a general construction could start from the fact that the
subloop generated by all squares and the nucleus is normal and
is a group; see \eqnref{e91}.  In this connection,
we also pose the problem of characterizing those groups that can
appear in such a context.

\end{document}